\DeclareMathOperator{\Aut}{Aut}
\DeclareMathOperator{\denom}{denom}
\DeclareMathOperator{\num}{num}
\DeclareMathOperator{\perm}{perm}
\DeclareMathOperator{\Hom}{Hom}
\DeclareMathOperator{\new}{new}
\DeclareMathOperator{\rel}{rel}
\DeclareMathOperator{\base}{base}
\DeclareMathOperator{\inc}{inc}
\newtheorem{theorem}{Theorem}[section]
\newtheorem{cor}[theorem]{Corollary}
\newtheorem{lemma}[theorem]{Lemma}
\newtheorem{prop}[theorem]{Proposition}
\newtheorem{conj}[theorem]{Conjecture}
\newtheorem{example}[theorem]{Example}
\newtheorem{lem}[theorem]{Lemma}
\newtheorem*{lemma*}{Lemma}
\numberwithin{equation}{section}
\def\bbQ{ {\mathbb Q}}
\def\bbZ{ {\mathbb Z}}
\def\bb1{ {\mathbb 1}}
\def\bbE{ {\mathbb E}}
\def\fkm{ {\mathfrak m}}
\def\cO{ {\mathcal O} }
\def\cP{{\mathcal P}}
\title{DENSITY OF $p$-ADIC POLYNOMIALS GENERATING EXTENSIONS WITH FIXED SPLITTING TYPE}
\author{John Yin$^1$}
\address{$^1$Department of Mathematics at University of Wisconsin, Madison}
\email{jbyin@wisc.edu}
\begin{document}
\begin{abstract}
We prove that the density of polynomials $P(x)=\sum_{i=0}^n a_n x^n$ over a local field $K$ generating an \'etale extension with specified splitting type is a rational function in terms of the size of the residue field of $K$ in the case where the splitting type is tame. Moreover, we give a computable recursive formula for these densities and compute the asymptotics of this density as the size of the residue field tends to infinity.
\end{abstract}
\maketitle
\sloppy
\section{Introduction}
In \cite{bhargava_cremona_fisher_gajović_2022}, the authors conjectured that the probability that a random $p$-adic polynomial $P(x)= c_nx^n + c_{n-1}x^{n-1} \dots + c_0$ with coefficients in $\bbZ_p$ generates an \'etale extension such that $p$ splits in a specified way (e.g. $\bbQ_p[x]/f$ is a totally ramified extension, unramified extension, a product of two extensions with specified residue degrees) is a rational function $\rho$ of $p$, and moreover satisfies the functional equation $\rho(p)=\rho(1/p)$. We will prove the rationality of $\rho$ in the tamely ramified case. As we shall see, our methods are more general and we will prove rationality of the density of polynomials satisfying many other conditions. Moreover, we will give a recursive method to calculate these densities.

We parametrize polynomials $P(x)$ of degree at most $n$ as above by $\mathbb Z_p^{n+1}$ with the Haar measure normalized so that the total measure of $\bbZ_p^{n+1}$ is $1$. Now given an \'etale extension $L$, it is isomorphic to a product of field extensions $L_i$, so that $L = \prod_{i=1}^m L_i$. We define the splitting type of $L$ to be the multiset of pairs $$\sigma = \{(e(L_1),f(L_1)),(e(L_2),f(L_2)), \dots, (e(L_n),f(L_n))\}.$$ This is written as $(f(L_1)^{e(L_1)}, \dots, f(L_m)^{e(L_m)})$ in \cite{bhargava_cremona_fisher_gajović_2022}, and we will follow their notation. Fixing such a splitting type $\sigma$, let $U_\sigma(p) \subset \mathbb Z_p^{n+1}$ be the $p$-adic open subset of polynomials with splitting type $\sigma$ and $\rho(n,\sigma;p) = \mu_{\mathrm{Haar}}(U_\sigma(p))$. As a few sample examples, \cite{bhargava_cremona_fisher_gajović_2022} computes
\begin{align}\label{bhargava_computations}
    \rho(2,(1^11^1);p) &= \frac{1}{2} &\rho(2,(2^1);p) &= \frac{p^2-p+1}{2(p^2+p+1)}\\
    \rho(3,(1^11^11^1);p) &= \frac{p^4+2p^2+1}{6(p^4+p^3+p^2+p+1)} &\rho(3,(1^12^1);p) &= \frac{p^4+1}{2(p^4+p^3+p^2+p+1)}.
\end{align}
Similarly, let $\alpha(n,\sigma;p)$ and $\beta(n,\sigma;p)$ be defined to be the measure of $\{P \in U_{\sigma(p)}: P \text{ is monic}\}$ and $\{P \in U_{\sigma(p)}: P \text{ is monic and } P \equiv x^n \mod p\}$, respectively. Based on numerical evidence and some of their results, \cite[Conjecture 1.2]{bhargava_cremona_fisher_gajović_2022} states the following conjecture.
\begin{conj}\label{conj: intro bhargava}
The densities $\rho(n,\sigma;p)$, $\alpha(n,\sigma;p)$, and $\beta(n,\sigma;p)$ are rational functions in $p$ and satisfy
\[\rho(n,\sigma;p^{-1}) = \rho(n,\sigma;p).\]
\[\alpha(n,\sigma;p^{-1}) = \beta(n,\sigma;p).\]

\end{conj}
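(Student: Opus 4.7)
The plan is to establish rationality (in the tame case) by stratifying the parameter space $\mathbb{Z}_p^{n+1}$ and applying Hensel's lemma to reduce to simpler ``near $x^m$'' subproblems over unramified extensions, and to read off the recursive formula from this stratification; the functional equations would then be a more subtle payoff. First I would reduce the $\rho$-statement to the $\alpha$-statement by integrating over the leading coefficient $c_n$. Writing $c_n = p^k u$ with $u \in \mathbb{Z}_p^\times$ and $k \geq 0$, the case $k=0$ reduces, after scaling by $u^{-1}$ (which does not affect the splitting type), to a monic polynomial of degree $n$ and contributes $(1-p^{-1})\alpha(n,\sigma;p)$, while the case $k>0$ gives a polynomial of effective degree $<n$ and recurses to $\rho(n-1,\sigma;p)$. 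This yields an identity of the shape $\rho(n,\sigma;p) = (1-p^{-1})\alpha(n,\sigma;p) + p^{-1}\rho(n-1,\sigma;p)$, modulo minor degenerate terms that need separate care.

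For the $\alpha$-problem I would stratify monic polynomials by the factorization of their reduction $\bar P = \prod_i \bar g_i^{m_i}$ into distinct irreducibles in $\mathbb{F}_p[x]$. Hensel's lemma gives a unique decomposition $P = \prod_i P_i$ with $P_i$ monic of degree $m_i \deg \bar g_i$ and $P_i \equiv \bar g_i^{m_i}$. The splitting type $\sigma$ of the \'etale algebra $K[x]/P$ decomposes accordingly: each pair $(e,f) \in \sigma$ with $\deg \bar g_i \mid f$ contributes to the $i$-th block. After shifting $x \mapsto x + \alpha_i$ over the unramified extension $K_i$ with residue field $\mathbb{F}_p[x]/\bar g_i$, each factor $P_i$ presents a $\beta$-problem of degree $m_i$ over $K_i$. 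Summing over the finite combinatorial data $(\bar g_i, m_i, \sigma_i)$, together with the known counts of irreducible polynomials of each degree in $\mathbb{F}_p[x]$ (which are rational functions of $p$), expresses $\alpha(n,\sigma;p)$ as a finite sum of products of $\beta$-values over unramified base extensions.

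The remaining ingredient is to compute $\beta(m,\sigma';q)$ for tame $\sigma'$, which I would do by stratifying further on the Newton polygon of $P$. In the tame case the Newton polygon determines the multiset of root valuations, and on each stratum compatible with $\sigma'$, Krasner's lemma controls the fiber of the map from polynomials to \'etale algebras. The density of each stratum is a monomial in $q$, so summing over the finitely many relevant strata produces a rational function. Composing the three steps yields an explicit recursion that outputs $\rho$, $\alpha$, and $\beta$ as rational functions of $p$, proving the first half of the conjecture in the tame case.

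The main obstacle will be the functional equations, especially $\alpha(n,\sigma;p^{-1}) = \beta(n,\sigma;p)$, which are not visible at any single step of the recursion and seem to require a hidden symmetry. The most natural candidate is an involution such as the coefficient reversal $P(x) \mapsto x^n P(1/x)$ paired with a negation of slopes on the Newton polygon, which should exchange the $\alpha$- and $\beta$-strata while effectively transforming the residue-field size from $p$ to $p^{-1}$. Verifying that such an involution preserves splitting type is delicate, since reversal only respects \'etale structure up to twists depending on the constant term, and making this precise likely requires the full explicit recursion rather than a clean conceptual argument --- which is why even a tame-case proof of the functional equations is substantially harder than rationality, and why the wild case (where Krasner's lemma no longer isolates \'etale algebras as cleanly) appears out of reach of this method.
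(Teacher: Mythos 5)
Your approach is top-down on the polynomial side (Hensel factorization of $\bar P$, then Newton polygons), whereas the paper works bottom-up from the root side: it applies Caruso's $p$-adic Kac--Rice formula (Theorem~\ref{thm:Xavier_Integral}) to write $\bbE[Z^{\new}_{U,L}]$ as $||D_L||\,w(d)\int_U [\cO_L:\cO_K[\alpha]]^{-1}\,d\lambda_L$, converts the index to the valuation of the polynomial discriminant $\Delta_{P_{\alpha/K}}$ via Lemma~\ref{lem:cond_disc_order}, and sets up a recursion on the Teichm\"uller expansion of $\alpha$ tracking how the Galois conjugates cluster (Proposition~\ref{prop:induction_a_with_P}). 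These are genuinely different decompositions of the same problem.

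The main gap is in your third step. The Newton polygon of $P$ determines only the multiset of root valuations, hence only ramification indices; it does not determine inertia degrees, nor which of the $\gcd(q-1,e)$ isomorphism classes of tamely and totally ramified degree-$e$ extension appears at a slope $a/e$. Fixing the Newton polygon, the splitting type still depends on how the residual polynomial attached to each slope factors over the residue field, and the resulting conditional densities are nontrivial rational functions of $q$, not monomials. Already for monic quadratics $\equiv x^2 \bmod \pi_K$ with both root valuations equal to $1$, the residual quadratic can be split, irreducible, or a square, with conditional densities $\tfrac12 - q^{-1}$, $\tfrac12$, $q^{-1}$, and the last case requires further recursion. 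So ``sum over finitely many strata each of monomial density'' is not the argument; you need a further recursion on residual factorization data, and showing that this recursion closes off and yields rationality is exactly where the combinatorial work lives --- which is the analogue of the paper's carousel recursion, and your proposal does not engage with it. A smaller issue: the identity $\rho(n,\sigma;p) = (1-p^{-1})\alpha(n,\sigma;p) + p^{-1}\rho(n-1,\sigma;p)$ is not right as written, because when $v(c_n)\ge 1$ the leading coefficient still affects the \'etale algebra (it controls the roots of negative valuation), so one cannot simply drop it; the paper and \cite{bhargava_cremona_fisher_gajović_2022} handle this with the reversal map $P\mapsto x^n P(1/x)$ (see Lemma~\ref{lem:temp1} and Theorem~\ref{thm:reduce_to_OL}), which repackages roots outside $\cO_L$ as roots in $\fkm_L$ rather than forgetting a coefficient.

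Finally, the displayed statement is the full BCFG conjecture including both functional equations, and the paper's Theorem~\ref{thm:main} only establishes the rationality half in the tame case --- just as your proposal only addresses rationality. Your instinct that the reversal is the symmetry behind $\alpha(p^{-1})=\beta(p)$ is consistent with the paper's use of it, but the paper also stops short of the functional equation and defers it to subsequent work.
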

In this paper, we will prove the rationality part of this conjecture for tame extensions (\ref{thm:main}).

\subsection{Overview of proof}
One main ingredient is \ref{thm:Xavier_Integral}, proven in \cite{caruso2022zeroes}, which makes use of the $p$-adic Kac-Rice formula to compute the average number of roots of a random polynomial in a fixed open subset of $\cO_L$ for a given \'etale extension $L$. To use this formula, it turns out one needs to calculate the density of elements $\alpha \in \cO_L$ whose minimal polynomial $P_{\alpha/\bbQ_p}$ over $\bbQ_p$ has a specified valuation of polynomial discriminant $\Delta_{P_{\alpha/\bbQ_p}}$. That is, we need to calculate the density $\mu_L(\{\alpha \in \cO_L: v_p(\Delta_{P_{\alpha/\bbQ_p}})=c\})$ for every $c$, where $\mu_L$ is the normalized Haar measure on $\cO_L$. Calculating this density and summing over the \'etale extensions $L$ is the main technical part of this paper, which we will explain below. With this we obtain the average number of roots of a random polynomial in extensions with prescribed splitting behavior. Finally, a couple of transformations are done in Sections \ref{sec:main} and \ref{sec:red_to_OL_thm_proof} to get from this to the density of random polynomials generating an extension with prescribed splitting behavior, which concludes the proof. 

As an example of the process involved in the calculation of $\mu_L(\{\alpha \in \cO_L: v_p(\Delta_{P_{\alpha/\bbQ_p}})=c\})$, we fix a square root $p^{1/2}$ of $p$ and take $L$ to be $\bbQ_p[\zeta_{p^3-1},p^{1/2}]$, which is an extension of $\bbQ_p$ of ramification index $2$ and inertia degree $3$. Also let $\fkm_L$ be the maximal ideal in $\cO_L$. Elements of $\alpha \in \cO_L$ have a Teichm\"uller expansion of the form $\alpha = \sum a_i p^{i/2}$, where $a_i$ is either a power of a $(p^3-1)$-th root of unity or $0$. The $6$ Galois conjugates of $\alpha$ are of the form $\sum a_i^{p^j}(-1)^{ib} p^{i/2}$ where $j \in \{0,1,2\}$ and $b \in \{0,1\}$. With this expression of the Galois conjugates, one can recursively calculate the valuation of the discriminant of the minimal polynomial by using the definition $\Delta_{P_{\alpha/\bbQ_p}} = \prod_{\alpha' \neq \alpha'' \text{ Galois conjugates of }\alpha}(\alpha'-\alpha'')$. Here we use the convention that in the case our polynomial is linear, the product is empty and our discriminant is $1$. This recursion is somewhat combinatorially complicated, but we will work out a few steps in the recursion for this $L$ to give the reader an idea of the process. 

The context of the following paragraphs is that we wish to calculate the $p$-adic valuation of the product of pairwise differences between Galois conjugates of $\alpha$ by analyzing its Teichm\"uller expansion $\alpha = \sum_{i=0}^\infty a_i p^{i/2}$. We start out with considering the lowest degree coefficient $a_0$. There are two cases. If $a_0 \in \bbQ_p$, which happens for exactly $p$ choices of $a_0$, then the coefficient of $p^{0/2}$ in the Teichm\"uller expansion of the Galois conjugates of $\alpha$ must all be $a_0$, and hence for these elements, $v_p(P_{\alpha/\bbQ_p}) = v_p(P_{\alpha-a_0})$, where $\alpha-a_0 \in \fkm_L$. So, we just found that $$\mu_L(\{\alpha \in \cO_L: a_0 \in \bbQ_p, v_p(\Delta_{P_{\alpha/\bbQ_p}})=c\})=p\mu_L(\{\alpha \in \fkm_L: v_p(\Delta_{P_{\alpha/\bbQ_p}})=c\}).$$

We now do the same in the other case. If $a_0 \notin \bbQ_p$, which happens for $p^3-p$ choices of $a_0$, then the coefficients of $p^{0/2}$ in the Teichm\"uller expansion of Galois conjugates of $\alpha$ are precisely the Galois conjugates of $a_0$, of which there will be $3$. In fact, it turns out that the $6$ Galois conjugates are neatly partitioned into $3$ sets of $2$ Galois conjugates, with each set containing the three Galois conjugates with a fixed $p^{0/2}$ coefficient. Now, $v_p(a_0^{p^{j'}}-a_0^{p^j})=0$ for distinct $j,j' \in \{0,1,2\}$ since $\alpha$ is a $p^3-1$-th root of unity. In other words, for Galois conjugates $\alpha', \alpha''$ in different partitions (i.e. with different $p^{0/2}$ coefficients), we have $v_p(\alpha'-\alpha'')=0$. So, in the context of finding the valuation of the product of pairwise differences of Galois conjugates of $\alpha$, all that's left is to calculate the valuation of the product of $\alpha'-\alpha''$, as $\alpha',\alpha''$ vary over pairs of Galois conjugates of $\alpha$ with the same $p^{0/2}$-th coefficient. There are $3$ such pairs, one coming from each element of the partition above. Now, note that the Galois group acts transitively on the partitions themselves. In other words, the valuation of the difference between each pair are the same. So, it suffices to study one set in the partition, say the set $\{\alpha,\alpha'\}$ whose coefficient of $p^{0/2}$ is $a_0$. Now we note that $\alpha,\alpha'$ are precisely the Galois conjugates of $\alpha$ over $\bbQ_p[a_0]$. Moreover, $\alpha-\alpha'=(\alpha-a_0)-(\alpha'-a_0)$, and $\alpha-a_0$ lies in $\fkm_L$. Thus, also we obtain $$\mu_L(\{\alpha \in \cO_L: a_0 \notin \bbQ_p, v_p(\Delta_{P_{\alpha/\bbQ_p}})=c\})=(p^3-p)\mu_L(\{\alpha \in \fkm_L: v_p(\Delta_{P_{\alpha/\bbQ_p[a_0]}})=c\}).$$

The above two paragraphs conclude the first step of the recursion. The combination of the two cases above reduced the calculation of $\mu_L(\{\alpha \in \cO_L: v_p(\Delta_{P_{\alpha/\bbQ_p}})=c\})$ to the calculation of multiple densities of the form $\mu_L(\{\alpha \in \fkm_L: v_p(\Delta_{P_{\alpha/K}})=c'\})$, for some extension $K$ of $\bbQ_p$ and integer $c' \leq c$. As we shall see, this pattern continues. Although in the above cases, $c'$ happened to equal $c$, we do one more step in the recursion to see that $c'$ can be less than $c$. We continue from the first case, where we ended up wanting to compute $\mu_L(\{\alpha \in \fkm_L: v_p(\Delta_{P_{\alpha/\bbQ_p}})=c\})$. Elements in $\alpha \in \fkm_L$ have Teichm\"uller expansion of the form $\sum_{i \geq 1} a_i p^{i/2}$. We again focus on the lowest degree term $a_1p^{1/2}$. Now there are three cases. The first case is if $a_1 = 0$, then we reduce to the calculation of $\{\alpha \in \fkm_L^2: a_1 = 0,  v_p(\Delta_{P_{\alpha/\bbQ_p}})=c\}$. The second case is if $a_1 \in \bbQ_p \setminus 0$, which occurs for a proportion of $p-1$ of all elements in $\fkm_L$, then the Galois conjugates are separated into $2$ partitions, each with $3$ conjugates with a common leading term in the Teichm\"uller expansion $\sum_{i \geq 1} a_i p^{i/2}$ is $a_1p^{1/2}$ or $-a_1p^{1/2}$. Now for conjugates $\alpha', \alpha''$ belonging to two different partitions, $v_p(\alpha'-\alpha'') = v_p(2a_1p^{1/2}) = 1/2$, assuming $p \neq 2$. This is one example of where the tameness assumption comes into play. There are $2 \cdot 3\cdot 3=18$ such ordered pairs, giving a contribution of $18/2$ to the $p$-adic valuation of the discriminant. Just as we did in the previous paragraph, we can now focus on product of pairwise differences of Galois conjugates $\alpha,\alpha'$ in one partition, say the one with leading term $a_1p^{1/2}$. Moreover, for Galois conjugates in this partition, $\alpha-\alpha'=(\alpha-a_1p^{1/2})-(\alpha'-a_1p^{1/2})$, and $\alpha-a_1p^{1/2}$ lies in $\fkm_L^2$. Now there are two partitions, with both of the product of the pairwise differences of elements in each partition giving the same contribution to the $p$-adic valuation of the discriminant. Thus, we have that $$\mu_L(\{\alpha \in \fkm_L: a_1 \in \bbQ_p \setminus \{0\},  v_p(\Delta_{P_{\alpha/\bbQ_p}})=c\})=(p-1)\mu_L(\{\alpha \in \fkm_L^2: v_p(\Delta_{P_{\alpha/\bbQ_p[a_1p^{1/2}]}})=(c-9)/2\}).$$ Finally, we will not work out in full detail the third case, but one can show with basic theory of local fields that if $a_1 \notin \bbQ_p$, then $a_1p^{1/2}$ will have $6$ conjugates, and in particular generate $L$ itself. So, the pairwise difference of Galois conjugates of $\alpha$ would all have $p$-adic valuation of $1/2$, and we would actually obtain $$\mu_L(\{\alpha \in \fkm_L: a_1 \notin \bbQ_p,  v_p(\Delta_{P_{\alpha/\bbQ_p}})=c\})=(p^3-p)\mu_L(\{\alpha \in \fkm_L^2: v_p(\Delta_{P_{\alpha/L}})=(c-15)/6\}).$$ This concludes another step of the recursion. Note that this point, we have reached a base case. Here we use the convention that the discriminant of a linear polynomial is $1$, and so $\mu_L(\{\alpha \in \fkm_L^2: v_p(\Delta_{P_{\alpha/L}})=(c-15)/6\})$ is $p^{-6}$ if $(c-15)/6 = 0$, and $0$ otherwise. 

More generally, given an integer $i$, a local field $K$, and an integer $c$, and $L$ an extension of $K$, our recursion reduces the calculation of $\mu_L(\{\alpha \in \fkm_L^{i}: v_p(\Delta_{P_{\alpha/K}})=c\})$ to $\mu_L(\{\alpha \in \fkm_L^{i+1}: v_p(\Delta_{P_{\alpha/F}})=c'\})$ for some extension $F/K$ contained in $L$ and $c'$ an integer at most $c$. Now consider the poset $$\cP = \{(i,c,F): i \in \bbZ_{\geq 0}, c \in \bbZ, K \subset F \subset L\}$$ with ordering given by $(i',c',F') \leq (i,c,F)$ if $i' \geq i$, $c' \leq c$, and $F' \supset F$. Associating the set $\{\alpha \in \fkm_L^{i}: v_p(\Delta_{P_{\alpha/F}})=c\}$ to each element of $(i,c,F)$ of $\cP$, we see that the recursive step outlined above is simply the process of moving down on this partial order. Now, the only base cases we know is that we can find the measure of $\{\alpha \in \fkm_L^{i}: v_p(\Delta_{P_{\alpha/F}})=c\}$ when $F=L$, or when $c < 0$ since nothing in $\cO_L$ can have discriminant with negative valuation. Let $BC = \{(i,c,F):F=L \text{ or } c<0\} \subset \cP$ represent these base cases. Now, if $\cP$ were bounded from below by $BC$, then this recursive step would be enough for us to calculate all  $\mu_L(\{\alpha \in \fkm_L^{i}: v_p(\Delta_{P_{\alpha/K}})=c\})$ from our base cases, but it's not. Indeed, as we saw from the above paragraph, in the first case where $a_1=0$, we merely reduced calculating $\mu_L(\{\alpha \in \fkm_L: v_p(\Delta_{P_{\alpha/\bbQ_p}})=c\})$ to calculating $\mu_L(\{\alpha \in \fkm_L^2: v_p(\Delta_{P_{\alpha/\bbQ_p}})=c\})$. Actually, our recursive step will just keep pushing this along, and move from $(i,c,\bbQ_p)$ to $(i+1,c,\bbQ_p)$ to $(i+2,c,\bbQ_p)$, and so on. So, we need a further input to close off the recursion. The trick here is simply to notice that \begin{equation}\label{eqn:temp131313222}
    \mu_L(\{\alpha \in \fkm_L^2: v_p(\Delta_{P_{\alpha/\bbQ_p}})=c\})=p^{-6}\mu_L(\{\alpha \in \fkm_L^2: v_p(\Delta_{P_{\alpha/\bbQ_p}})=c-30\})
\end{equation} where we have used the fact that dividing by $p$ changes the $\mu_L$ measure of a set by a factor of $p^{[L:\bbQ_p]}$, and also $$\prod_{\alpha',\alpha'' \text{ conjugates of }\alpha} (\alpha'/p-\alpha''/p) = p^{-30}\prod_{\alpha',\alpha'' \text{ conjugates of }\alpha} (\alpha'-\alpha'').$$ In terms of $\cP$, we are essentially establishing an equivalence relation $(i+e(L/F),c,F) \sim (i,c-\deg(L/F)(\deg(L/F)-1)i/e,\bbQ_p)$. Now the set $\cP/\sim$ carries the induced partial order, which is bounded below by base cases $BC/\sim$.


In summary, calculating $\{\alpha \in \fkm_L^i: v_p(\Delta_{P_{\alpha/K}})=c\}$ is done with the following four steps. \begin{enumerate}
    \item Take a general $\alpha \in \fkm_L^i$ and consider the lowest term $a_ip^{i/e(L)}$ in the Teichm\"uller expansion.
    \item For each choice of $a_i$ in the Teichm\"uller expansion, we get a partition of the Galois conjugates of $\alpha$, with each set in the partition containing Galois conjugates with the same leading term.
    \item Using the partition, reduce the calculation of terms like $\mu(\{\alpha \in \fkm_L^i: v_p(\Delta_{P_{\alpha/K}})=c\})$ to the calculation of terms like $\mu(\{\alpha \in \fkm_L^{i+1}: v_p(\Delta_{P_{\alpha/F}})=c'\})$
    \item When $i$ gets large use the trick similar to Equation \ref{eqn:temp131313222} to reduce the calculation $\{\alpha \in \fkm_L^i: v_p(\Delta_{P_{\alpha/K}})=c\}$ to $\{\alpha \in \fkm_L^{i-e}: v_p(\Delta_{P_{\alpha/K}})=c'\}$ where $c' < c$ when $i$ gets large.
    \item Repeat the above steps until we reach a base case.
\end{enumerate}

The main idea and novelty of this paper is succinctly summarized above and there are little additional conceptual difficulties. Much of the paper is dedicated to the many technical and combinatorial details in these calculations, including the following. Most of the notational baggage comes from these technicalities. \begin{itemize}
    \item We need to enumerate all extensions of a given splitting type. Fortunately, this is not terrible for tame extensions, but it still takes some bookkeeping.
    \item There are extensions like $L=\bbQ_p[\zeta_{p^3-1},(\zeta_{p^3-1}p)^{1/4}]$. So, Teichm\"uller expansions of elements in $\cO_L$ are of the form $\sum a_i (\zeta_{p^3-1}p)^{1/4})^i \in \cO_L$. Now although we can still express the Galois conjugates of $\sum a_i (\zeta_{p^3-1}p)^{1/4})^i \in \cO_L$, and find its Galois conjugates fairly easily, there are combinatorics involved in determining the partitions of Galois conjugates arising from a given $a_i$, and determining the number of $a_i$ giving a specified partition.
    \item $L$ can be a product of field extensions rather than a single field extension.
    \item We need to sum over \textit{all} extensions $L$ with a given splitting type. We only considered one $L$ above.
\end{itemize}

We remark that without summing over all extensions $L$ with a given splitting type, we get only get ``quasi-rationality" for $\rho$, where quasi-rationality means that as a function of $p$, it is rational in arithmetic progressions with a fixed modulus. Although we do not calculate it out in this paper, following through with the same methods outlined above, we would see that the proportion of polynomials in $\bbZ_p[x]$ of degree $9$ generating $\bbQ_p[\zeta_{p^3-1},p^{1/3}]$ is a quasi-rational function in terms of $p$, so that its restriction to $p \equiv 0 \mod 3$ is a rational function, and its restriction to $p \equiv 1,2 \mod 3$ is another rational function.

\subsection{Previous work}

\subsubsection*{Densities of random polynomial with fixed splitting data} The study of statistics of roots of random polynomials dates back to the work of Bloch and P\"olya who proved asymptotic bounds on the number of expected roots of polynomials with coefficients in $\{-1,0,1\}$. Then, Kac gives an exact formula for the expected number of roots of polynomials with Gaussian distributed coefficients. For a survey and related results, there is \cite{dembo2002random} and \cite{nguyen2021random}.

The questions surrounding the expected number of roots of random polynomials over $p$-adic fields were studied in \cite{evans2006expected},\cite{buhler2006probability}, and \cite{kulkarni2021p}. Following, there is the paper \cite{bhargava_cremona_fisher_gajović_2022} (the  conjecture in which is precisely the one we attack in this paper), which shows that the density of $p$-adic polynomials with fixed number of roots is a rational function in $p$, and satisfies the functional equation sending $p$ to $p^{-1}$. There is also the aforementioned paper \cite{caruso2022zeroes}, from which we obtain a crucial formula for finding the average number of roots of a random polynomial in $\cO_L$ for a fixed extension $L$.

\subsubsection*{Carousels}

\begin{figure} 
    \centering
    \includegraphics[scale=.45]{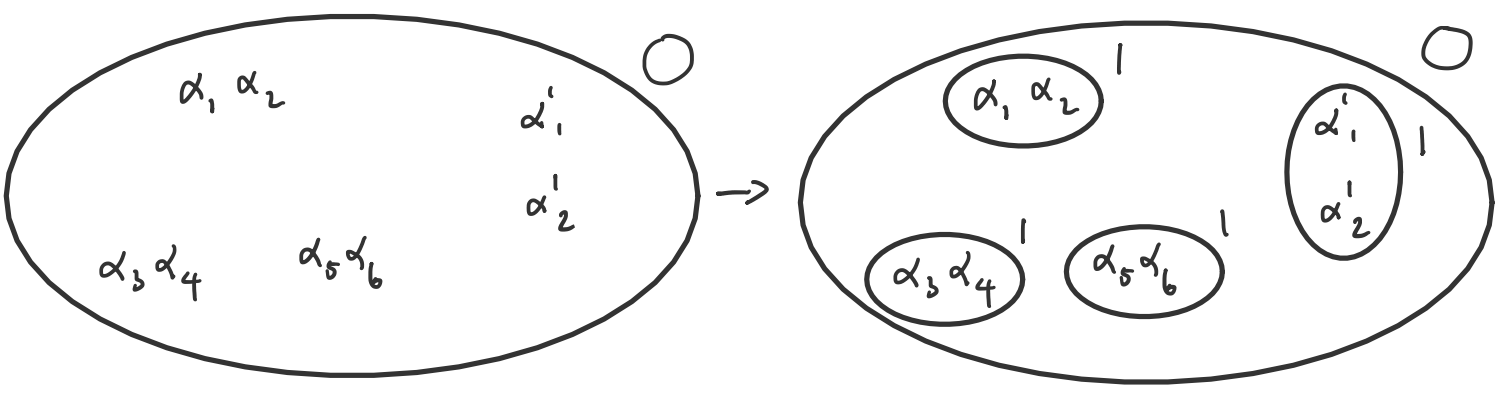}
    \caption{This is a visualization of the main idea behind Proposition \ref{prop:induction_a_with_P}. It is a calculation of the discriminant of the minimal polynomial over $\bbQ_p$ of $(\alpha_1,\alpha_1')=(\zeta_{p^3-1}+p^{1/2},p^{1/2})$. Here $\alpha_2=\zeta_{p^3-1}-p^{1/2}$, $\alpha_3=\zeta_{p^3-1}^p+p^{1/2}$, $\alpha_4=\zeta_{p^3-1}^p-p^{1/2}$, $\alpha_5=\zeta_{p^3-1}^{p^2}+p^{1/2}$, $\alpha_6=\zeta_{p^3-1}^{p^2}-p^{1/2}$, $\alpha_2'=-p^{1/2}$. The discriminant of $P_{(\alpha_1,\alpha_1')/\bbQ_p}$ is the product of the pairwise differences of $\alpha_i$ and $\alpha_i'$. However, we can see clearly from the first terms of the Teichm\"uller expansion of $\alpha_i$ and $\alpha_i'$ the valuation of some pairwise differences. This is visualized in the diagram in that for any two elements not in the same inner circle, such as $\alpha_1$ and $\alpha_1'$, we know that $v_p(\alpha_1-\alpha_1')=0$. Thus, the calculation of the valuation of the discriminant of $P_{(\alpha_1,\alpha_1')/\bbQ_p}$ is reduced to the calculation of valuation of the pairwise products of elements in the inner circles which is precisely the valuation of $P_{\alpha_1/\bbQ_p[\zeta_{p^3-1}]}$ and $P_{\alpha_2/\bbQ_p[0]}$.}\label{fig:recursion}
\end{figure}

In this paper, we make heavy use of the behavior of sets of Galois conjugates under the Galois group. In particular, we consider the distances between Galois conjugates. One may picture this as a carousel, as in \ref{fig:recursion}. In the figure, one sees that the conjugates $\alpha_1$ and $\alpha_2$ are closer to each other than $\alpha_1$ and any other conjugate. As a result, there is a subgroup of the Galois group stabilizing the set $\{\alpha_1,\alpha_2\}$, so pictorially, $\alpha_1,\alpha_2$ are in a carousel within a bigger carousel.

Other papers have made use of such structures. For example, in \cite{dokchitser_dokchitser_maistret_morgan_2022} and \cite{dokchitser_maistret_2020}, one sees that much information about the hyperelliptic curve $y^2=f(x)$ can be obtained from the carousel of roots of $f(x)$. Note that in the above paper, carousels are referred to as ``cluster pictures." 
\section{Notation/Convention}

We adopt a similar notation as in \cite{caruso2022zeroes}. We fix a local field $K$ and an algebraic closure $\bar K$. We equip $\bar K$ with the $p$-adic norm normalized by $||p||=p^{-[K:\bbQ_p]}$. Let $\cO_K$ be the discrete valuation ring of $K$. Let $\fkm_K$ be the maximal ideal of $\cO_K$. Let $\lambda_K$ be the associated Haar measure on $K$. Let $e_{\base}$ be the ramification index of $K$, and $f_{\base}$ be the inertia degree of $K$. Similarly, for an \'etale extension $L = \prod_i L_i$, where $L_i$ are field extensions of $K$, we denote $\cO_L$ to be $\prod_i \cO_{L_i}$ and $\fkm_L$ to be $\prod_i \fkm_{L_i}$.

Let $\Omega_d$ be the space of polynomials of degree at most $d$ with coefficients in $\cO_K$, which is set-theoretically identified with $\cO_K^{d+1}$. With this identification, we have a probability measure $\mu_r$ on $\Omega_r$ associated to $\lambda_K^{\otimes d+1}$ on $\cO_K^{d+1}$. When we refer to a random degree $d$ polynomial, we mean it in this sense. Also, for an \'etale extension $E/K$, and a polynomial $f \in K[x]$, we call the finite set $\Hom(K[x]/f, E)$ the roots of $f$ in $E$. Furthermore, if $\varphi \in \Hom(K[x]/f,E)$ is surjective, we say $\varphi$ is ``new" in $E$. Note that this definition makes sense even for $f$ such that $\deg(f) > \dim_K E$, but we will only concern ourselves with the case where $\deg(f) = \dim_K E$.

Fix a positive integer $d$, and integers $e_1, \dots, e_m$ and $f_1, \dots, f_m$ such that $d=\sum_i e_i f_i$. Let $\sigma$ be the multiset of pairs $\{(e_1,f_1), \dots, (e_m,f_m)\}$, which we will write as $(e_1^{f_1}, \dots, e_m^{f_m})$. We call such symbols splitting types. Let $L$ be an \'etale extension of $K$. We say $\pi_K$ has splitting type $\sigma$ in $L$ if $\pi_K$ factors as $\prod_i \pi_i^{e_i}$ in $\cO_L$, where $\pi_i$ are prime ideals of $\cO_L$ having residue degree $f_i$. For any field $L$, we define $e(L)$ to be the ramification index of $L$ over $\bbQ_p$, where $p$ is the characteristic of the residue field of $L$, and $f(L)$ to be the inertia degree of $L$. Similarly, we define $e(L/K)=e(L)/e(K)$ for any extension $L/K$ to be the relative ramification index of $L$ over $K$, and similarly for $f(L/K)$. Finally, let $[m]$ denote the set $\{1, \dots, m\}$.

Let $q$ be the size of the residue field of $K$. Fix a field extension $L/K$ of inertia degree $f$. Let $\pi_L$ be a uniformizer of $L$. Then, we may write elements of $L$ uniquely as $\sum_{i=-\infty}^{\infty} a_i \pi_L^i$, where $a_i \in L$ is either a $q^f-1$-th root of unity or $0$, and there is some cutoff $r \in \bbZ$ so that $a_i=0$ for $i < r$. This representation of an element in $L$ is called a Teichm\"uller expansion.


\section{Main Results} \label{sec:main}
For a splitting type $\sigma = (e_1^{f_1}, \dots, e_m^{f_m})$, let $\rho(\sigma,K)$ denote the density of polynomials $P \in \cO_K[x]$ of degree $d=\sum_{i=1}^m e_if_i$ such that $P$ generates an \'etale extension $L$ over $K$ and $\pi_K$ has splitting type $\sigma$ in $L$. Let $\alpha(\sigma,K)$ (resp. $\beta(\sigma,K)$) be the density of such polynomials above with the added condition that $P$ is monic (resp. $P$ is monic and $P(x) \mod \pi_K = x^n$. Our main result is the following.
\begin{theorem}\label{thm:main}
Fix a splitting type $\sigma = (e_1^{f_1}, \dots, e_m^{f_m})$.
\begin{enumerate}

    \item If $K,K'$ are two local fields with residue characteristic $p$ such that $\gcd(p,e_i)=1$ for all $i \in [m]$, such that $e(K)=e(K')$ and $f(K)=f(K')$, then $\rho(\sigma,K)=\rho(\sigma,K')$. In particular, we may replace the notation $\rho(\sigma,K)$ with $\rho(\sigma,e,f;p)$ when $\gcd(p,e_i)=1$ for all $i \in [m]$. Similarly, $\alpha(\sigma,K)$ and $\beta(\sigma,K)$ may be replaced with $\alpha(\sigma,e,f;p)$ and $\beta(\sigma,e,f;p)$.
    
    \item Fix $e$ and $f$ as well. Then, for primes $p$ such that $\gcd(p,e_i)=1$ for all $i \in [m]$, $\rho(\sigma,e,f;p)$, $\alpha(\sigma,e,f;p)$, and $\beta(\sigma,e,f;p)$ are rational functions of $q=p^f$.
\end{enumerate}
\end{theorem}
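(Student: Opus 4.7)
The plan is to follow the recursive strategy sketched in the overview, organized into three phases.

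\textbf{Phase 1 (Reduction to densities on $\cO_L$).} The first step is to invoke the $p$-adic Kac--Rice formula \ref{thm:Xavier_Integral} of \cite{caruso2022zeroes} to express $\rho(\sigma,K)$ as a finite sum, indexed by \'etale extensions $L/K$ with splitting type $\sigma$, of integrals over $\cO_L$ whose integrand is determined by $v_p(\Delta_{P_{\alpha/K}})$. Decomposing the integrand into a weighted sum of indicators $\bb1\{v_p(\Delta_{P_{\alpha/K}})=c\}$ reduces the problem to computing
\[ \mu_L\bigl(\{\alpha \in \cO_L : v_p(\Delta_{P_{\alpha/K}}) = c \text{ and } K[x]/P_{\alpha/K} \to L \text{ is surjective}\}\bigr) \]
for each $c$ and each $L$; analogous reductions handle $\alpha(\sigma,K)$ and $\beta(\sigma,K)$ with the extra ``monic'' and ``$P\equiv x^n\bmod\pi_K$'' constraints. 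Enumerating the $L$'s is finite and explicit: each field factor $L_i$ is a tame totally ramified extension, of degree $e_i$, of the unramified extension of $K$ of degree $f_i$, and by tame extension theory the number and structure of such $L_i$ depend only on $q$, $e_i$, and $f_i$.

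\textbf{Phase 2 (Recursive evaluation of the discriminant-valuation densities).} For each $L$, I would compute $\mu_L(\{\alpha \in \fkm_L^i : v_p(\Delta_{P_{\alpha/F}})=c\})$ for all triples $(i,c,F)$ with $K \subset F \subset L$, via the five-step recursion in the overview. The core operation: condition on the lowest-degree Teichm\"uller coefficient $a_i$ of $\alpha$, use the induced partition of the Galois conjugates of $\alpha$ to separate inter-cluster pairs (which contribute a known, explicitly computable amount to $v_p(\Delta)$) from intra-cluster pairs (which form a discriminant over the enlarged base $F' = F(a_i \pi_L^{i/e(L)})$), and recurse at a triple $(i+1, c', F')$ with $F\subset F'\subset L$. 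When $F$ fails to grow for $e(L/F)$ consecutive levels of $i$, I apply the rescaling identity \ref{eqn:temp131313222}: multiplying $\alpha$ by $\pi_L^{e(L/F)}$ relates the density at level $i + e(L/F)$ back to the density at level $i$ at the cost of a fixed downward shift in $c$. This collapses the recursion to one on the finite quotient poset $\cP/\sim$, bounded below by the base cases $F = L$ (where the minimal polynomial is linear and $\Delta = 1$) and $c < 0$ (density zero). The tameness hypothesis $\gcd(p, e_i) = 1$ is used throughout to pin down the shape of the leading-term partitions and to keep contributions such as $v_p(2 a_1 \pi_L^{1/2})$ under control.

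\textbf{Phase 3 (Rationality and independence of $K$).} Every quantity entering the recursion — cardinalities of Teichm\"uller coefficient classes, ramification/inertia indices of intermediate fields $F\subset F' \subset L$, partition multiplicities, and rescaling shifts of the form $c \mapsto c - [L:F]([L:F]-1)i/e(L/F)$ — depends only on $(e(K), f(K), \sigma, p)$ under the tameness hypothesis, and not on the particular local field $K$. Hence the recursion yields the same value at $K$ and at any $K'$ with matching invariants, which after summing over $L$ gives part~(1). For part~(2), unrolling the finite recursion on $\cP/\sim$ amounts to solving a finite linear system whose coefficients are polynomials in $q$, so the solution is a rational function of $q$. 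The transformations in Sections~\ref{sec:main} and \ref{sec:red_to_OL_thm_proof} that convert ``surjective-homomorphism counts over each $L$'' into the densities $\rho, \alpha, \beta$ are linear combinations and finite sums over splitting types, and thus preserve rationality in $q$.

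\textbf{The main obstacle.} The hardest part will be the full combinatorial setup of Phase~2: identifying the Galois-orbit partition induced by a given Teichm\"uller coefficient, counting how many coefficients produce each partition type, tracking which subextension $F'$ each $a_i$ generates, and formulating the partial order $\cP$ together with the equivalence $\sim$ so that the rescaling trick genuinely closes the recursion on a finite poset. The illustrative example over $L = \bbQ_p[\zeta_{p^3-1}, p^{1/2}]$ in the overview masks substantial complexity present in towers like $\bbQ_p[\zeta_{p^3-1}, (\zeta_{p^3-1} p)^{1/4}]$ and in products of field factors, and getting this bookkeeping uniformly correct — so that it visibly depends only on the data $(e(K), f(K), \sigma, p)$ — is where the bulk of the technical work will lie.
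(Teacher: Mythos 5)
Your Phase~1 is essentially the paper's reduction (Lemma~\ref{lem:temp1}, Theorem~\ref{thm:Xavier_Integral}, Theorem~\ref{thm:tamely_ram_extns}), modulo the detail that for $\rho$ the relevant domain is all of $L$, not $\cO_L$, which forces the extra decomposition of Theorem~\ref{thm:reduce_to_OL}; you gloss over that but it is a routine omission. The structural gap is in Phase~2. You propose to compute, for each fixed $L$, the densities $\mu_L(\{\alpha \in \fkm_L^i : v_p(\Delta_{P_{\alpha/F}})=c\})$ by recursion. As the paper itself notes, doing this one $L$ at a time only yields \emph{quasi}-rationality in $p$. The reason is that inside the recursion, once you fix a leading Teichm\"uller coefficient $a_i$ you must descend to a specific subextension $F' = F(a_i\pi_L^{i/e})$, and the identity of that $F'$ (which $j$-class it lands in) genuinely depends on $p$ modulo $e_i$. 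The paper's fix is to define $a(\sigma, e_{\base}, f_{\base}, c, \{b_i\},p)$ as the \emph{average} over all $\gcd(p^{f_i}-1, e_{i,\rel})$ isomorphism classes $L_{j_i}$ of the per-$L$ densities, and then to prove the equidistribution statement Lemma~\ref{lem:temp133}: as $(\{j_i\},\{a_{i,b_i}\})$ vary over the fibers $B_{\{j_i\}}$, the resulting subextensions $\psi_{\gamma_i}(L_{j_i})$ hit each isomorphism class equally often. Only after this averaging does the recursion close onto quantities depending solely on $(\sigma, e_{\base}, f_{\base}, c, \{b_i\}, p)$, giving Proposition~\ref{prop:induction_a_with_P}. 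Your plan never builds in this averaging step nor the equidistribution input that makes it work, so your recursion on a single $L$ does not terminate in data that is polynomial in $q$.

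The second gap is in Phase~3. Even granting that each coefficient $a(\sigma, e_{\base}, f_{\base}, c, \{b_i\},p)$ is a rational function of $q$, the target quantity is (up to prefactors) the generating function $G(\{b_i\})(t)$ evaluated at $t = q^{-e_{\base}/2}$. The rescaling identity (Corollary~\ref{cor:GF_reduce_i}) does turn $G$ into a rational function of $t$ and $q$, but substituting $t = q^{-e_{\base}/2}$ a priori gives a rational function of $q^{1/2}$, not of $q$. A large part of the paper's Section~\ref{sec:proof_of_main_result} is a delicate mod-$2$ parity analysis of the exponents $\frac{1}{e_{\base}}(d(d-1) - \sum_l \sum_{i\in E_l} d_i(\sum_{i\in E_l} d_i/n_{O,l} - 1))\beta$ and of the $G(\sigma_l, \ldots)$ factors at arguments $t^{n_{O,l}}$, verifying the exponent of $q^{1/2}$ in each monomial agrees with $\sum (e_{i,\rel}-1)f_{i,\rel} \pmod 2$, so that the prefactor $q^{\sum(e_{i,\rel}-1)f_{i,\rel}/2}$ clears the half-powers. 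Your claim that the recursion is "a finite linear system whose coefficients are polynomials in $q$" does not engage with this at all, and without it you cannot conclude rationality in $q$ rather than in $q^{1/2}$.
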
 We note that it is conjectured in \cite[Conjecture 1.2]{bhargava_cremona_fisher_gajović_2022} that part 2 of this theorem also holds for $p$ not satisfying the gcd condition, but we will not prove that in this paper.

We also have an asymptotic for the large $q$ limit of $\rho$. \begin{theorem} \label{thm:large_q_limit_rho}
Let $q=p^f$. We have $$\rho(\sigma,e,f;p)\sim \frac{1}{\perm(\sigma)\prod_{i=1}^m f_iq^{\sum_{i=1}^m (e_i-1)f_i}}$$ as $q \to \infty$.
\end{theorem}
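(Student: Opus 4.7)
The plan is to extract the leading asymptotic of $\rho(\sigma,e,f;p)$ as $q \to \infty$ via a direct combinatorial argument based on Hensel's lemma, bypassing the full recursive formula developed for Theorem \ref{thm:main}. At a high level, I would show that the polynomials contributing to $\rho(\sigma, e, f; p)$ at leading order are precisely those whose reduction modulo $\pi_K$ takes the form $\prod_{i=1}^m \bar Q_i^{e_i}$ for distinct monic irreducibles $\bar Q_i \in \bbF_q[x]$ of degree $f_i$.

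First I would reduce to the monic case. If the leading coefficient $c_d$ is a unit (probability $(q-1)/q$), then $K[x]/P \cong K[x]/(P/c_d)$ and $P/c_d$ is distributed uniformly on monic polynomials of degree $d$; when $c_d$ has positive valuation (probability $1/q$), a Newton polygon argument shows that either $P$ fails to have the prescribed splitting type, or the resulting contribution is controlled by $\rho$ for a strictly smaller splitting type $\sigma'$, which in either case is of strictly smaller order than the main asymptotic. Thus $\rho(\sigma,e,f;p) = \alpha(\sigma,e,f;p)(1 + o(1))$, where $\alpha$ denotes the corresponding monic density.

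Next, for a monic $P$ I would analyze the reduction $\bar P \in \bbF_q[x]$. Any tame \'etale extension $L = \prod_i L_i$ of splitting type $\sigma$ admits a generator $\alpha \in \cO_L$ whose residue in each $\bbF_{q^{f_i}}$ generates that residue field; for such an $\alpha$, the minimal polynomial of $\alpha$ reduces to $\prod_i \bar Q_i^{e_i}$ with $\bar Q_i \in \bbF_q[x]$ irreducible of degree $f_i$. Conversely, Hensel's lemma lifts any such reduction uniquely to a factorization $P = \prod P_i$ with $\bar P_i = \bar Q_i^{e_i}$, and with probability $1 - O(1/q)$ each factor $P_i$ is irreducible and generates the tame field of inertia $f_i$ and ramification $e_i$ (by Eisenstein's criterion applied after base change to the unramified extension $K_{f_i}$ of degree $f_i$).

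Finally I would count. The number of monic irreducibles of degree $f$ in $\bbF_q[x]$ is $q^f/f + O(q^{f/2})$, so the number of unordered tuples $(\bar Q_i)_{i=1}^m$ of pairwise distinct irreducibles of the prescribed degrees, modulo permuting indices with equal $(e_i, f_i)$, is asymptotic to $\prod_i (q^{f_i}/f_i)/\perm(\sigma)$. Each such tuple determines a distinct reduction $\bar P$, and the density of monic polynomials of degree $d$ lifting a given $\bar P$ is $q^{-d}$. Multiplying yields
\[
\alpha(\sigma,e,f;p) \sim \frac{1}{\perm(\sigma)\prod_i f_i} \cdot q^{\sum_i f_i - d} = \frac{1}{\perm(\sigma)\prod_i f_i \cdot q^{\sum_i (e_i-1)f_i}},
\]
and combining with the first step completes the proof. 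The main obstacle will be uniformly bounding the contribution of ``exceptional'' polynomials --- those whose mod-$\pi_K$ reduction has coincident irreducible factors, those whose reduction does not match any product of the form $\prod \bar Q_i^{e_i}$, and those whose Hensel factors $P_i$ fail to define a field of the required tame type --- and showing via a codimension estimate that their total contribution is of strictly smaller order than the main term.
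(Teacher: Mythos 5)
The proposal is correct in its main thrust but takes a genuinely different route from the paper's proof. The paper derives Theorem~\ref{thm:large_q_limit_rho} by working inside the machinery it has already built: it starts from Equation~\eqref{eqn:rho2} (which comes from Caruso's integral formula), uses Lemma~\ref{lem:smallest_disc_element} to identify the minimal discriminant valuation $c_0$, shows $\lim_{q\to\infty}a(\sigma,e_{\base},f_{\base},c_0,\{0\})=1$ via the recursion of Proposition~\ref{prop:induction_a_with_P} and the count in Theorem~\ref{thm:aiji_count}, and bounds the $G(\{1\})$ contributions with a crude estimate (Lemma~\ref{lem:large_q_limit_G1_bound}). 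Your proof instead works directly on the polynomial side: reduce to monic polynomials, observe that at leading order the mod-$\pi_K$ reduction must be $\prod_i\bar Q_i^{e_i}$ with distinct irreducibles $\bar Q_i$ of degree $f_i$, Hensel-factor $P$, verify the factors are Eisenstein after unramified base change with probability $1-O(1/q)$, and count. This is more elementary and self-contained; the paper's route is the natural one once Sections~4--6 are in place. Both yield the same asymptotic, and your count $\prod_i(q^{f_i}/f_i)\cdot q^{-d}/\perm(\sigma)$ correctly recovers the claimed exponent since $\sum_i f_i - d = -\sum_i(e_i-1)f_i$.

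One part of your sketch is imprecise as written and would need repair: the claim that the contribution from a non-unit leading coefficient is ``controlled by $\rho$ for a strictly smaller splitting type $\sigma'$.'' Since $\rho(\sigma')\asymp q^{-\sum_{\sigma'}(e_i-1)f_i}$ is of \emph{larger} order (not smaller) than $\rho(\sigma)$ when $\sigma'$ is a proper sub-multiset, invoking a smaller splitting type alone does not yield the required suppression. The actual source of the suppression is that when $v(c_d)>0$ the polynomial splits (via Newton polygon) into a factor with integral roots and a reversed factor with roots in $\fkm$, and the latter carries an extra volume penalty of order $q^{-[L':K]}$ relative to roots in $\cO_{L'}$ --- this is exactly the $G(A^c,1)$ term bounded in Lemma~\ref{lem:large_q_limit_G1_bound}. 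You would need to reproduce that two-factor estimate (or something equivalent) rather than appeal to $\rho(\sigma')$ alone. You flag the ``exceptional polynomials'' as the main obstacle, which is the right instinct; this particular point is where I would expect the bookkeeping to require the most care.
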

More precisely, if one takes $p$ to infinity, $f$ to infinity, or both, $$\lim_{p^f \to \infty}\rho(\sigma,e,f;p)\perm(\sigma)\prod_{i=1}^m f_iq^{\sum_{i=1}^m (e_i-1)f_i}$$ approaches $1$. Indeed, this agrees with the calculations \ref{bhargava_computations} done in \cite{bhargava_cremona_fisher_gajović_2022}.

We now launch into the preliminaries of the proof of these theorems. The goal for the rest of this section to transform the problem of finding $\rho(\sigma,K)$ to the problem of finding measures of sets of the form $\mu_L(\{\alpha \in \fkm_L^i: v_p(\Delta_{P_{\alpha/\bbQ_p}})=c\})$. Let $L/K$ be an \'etale extension of degree $d$. For $U \subset L$ an open subset, let $Z_{U,L}^{\new}: \Omega_d \to \bbZ$ be a random variable sending $P$ to the number of new roots of $P$ in $U$. With this notation, we have that 

\begin{lem}
\label{lem:temp1}
Fix a local field $K$ and a splitting type $\sigma=(e_1^{f_1}, \dots, e_m^{f_m})$. For $L=\prod_{i=1}^m L_i$, and $I \subset [m]$ let $U_{L,I} = \prod_{i \in I} \fkm_{L_i} \times \prod_{i \in [m] \setminus I} \cO_{L_i}$. Then, \begin{enumerate}
    \item $$\rho(\sigma,K)=\sum_{\substack{\textup{isom classes of }L, \\ \pi_K \textup{ has splitting type } \sigma \text{ in }L}}\frac{\bbE[Z_{L,L}^{\new}]}{\Aut(L)}.$$
    \item $$\alpha(\sigma,K)=\sum_{\substack{\textup{isom classes of }L, \\ \pi_K \textup{ has splitting type } \sigma \text{ in }L}}\frac{\bbE[Z_{\cO_L,L}^{\new}]}{\Aut(L)}.$$
    \item $$\beta(\sigma,K)=\sum_{I \subsetneq [m]} (-1)^{|I|+1} \sum_{\substack{\textup{isom classes of }L, \\ \pi_K \textup{ has splitting type } \sigma \text{ in }L}}\frac{\bbE[Z_{U_{L,I},L}^{\new}]}{\Aut(L)}.$$
    
\end{enumerate}
\end{lem}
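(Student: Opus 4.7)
The unifying observation is that for $P\in\Omega_d$ of exact degree $d$ (which holds almost surely) and $L$ an \'etale $K$-algebra of dimension $d$, any surjective $K$-algebra homomorphism $K[x]/P \to L$ is automatically an isomorphism, since the domain and codomain are \'etale $K$-algebras of the same dimension. Evaluating at $x$ identifies the new roots of $P$ in $L$ with these isomorphisms, so for almost every $P$
\[
Z_{L,L}^{\new}(P) = |\Aut(L)|\,\bb1[K[x]/P\cong L].
\]

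\emph{Part 1.} Integrating this identity over $\Omega_d$, dividing by $|\Aut(L)|$, and summing over isomorphism classes of $L$ with splitting type $\sigma$ gives
\[
\sum_L \frac{\bbE[Z_{L,L}^{\new}]}{|\Aut(L)|} = \sum_L \mu(\{P\in\Omega_d : K[x]/P\cong L\}) = \rho(\sigma,K),
\]
where the last equality uses that the events $\{K[x]/P\cong L\}$ are pairwise disjoint and their union is precisely the set defining $\rho(\sigma,K)$.

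\emph{Part 2.} A new root $\alpha \in \cO_L$ of $P$ is integral over $\cO_K$, so its minimal polynomial $Q_\alpha$ lies in $\cO_K[x]$ and is monic of degree $d$; hence $P$ has $\alpha$ as a root iff $P = a_d\cdot Q_\alpha$, where $a_d$ is the leading coefficient of $P$. Thus $Z_{\cO_L,L}^{\new}$ counts precisely the contribution from polynomials $P$ that are scalar multiples of a monic element of $\cO_K[x]$ generating $L$, and the argument of Part 1 restricted to $\cO_L$ yields the stated formula for $\alpha(\sigma,K)$.

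\emph{Part 3.} For a monic $P = \prod_i Q_i$ (CRT factorization relative to $L = \prod_i L_i$), the reduction $P \bmod \pi_K = x^n$ is equivalent to $Q_i \equiv x^{e_i f_i} \pmod{\pi_K}$ for every $i$, which is in turn equivalent to every root of $Q_i$ lying in $\fkm_{L_i}$. The crucial point is that, by Galois-equivariance of the valuation on $L_i/K$, either every root of $Q_i$ lies in $\fkm_{L_i}$ or none does. I plan to use this component-wise dichotomy to express the indicator $\bb1[\alpha \in \fkm_L]$ of a new root lying in the full maximal ideal as an inclusion-exclusion over proper subsets $I\subsetneq[m]$ of the indicators $\bb1[\alpha \in U_{L,I}]$, with sign $(-1)^{|I|+1}$. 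Running this identity through the counting scheme of Parts 1 and 2 then yields the stated formula for $\beta(\sigma,K)$.

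\emph{Main obstacle.} Parts 1 and 2 are essentially definition unpacking. The substantive step is Part 3: writing down and verifying the correct inclusion-exclusion identity that expresses $\bb1[\alpha \in \fkm_L]$ in terms of $\bb1[\alpha \in U_{L,I}]$ for $I\subsetneq[m]$ only, and checking that the signs and the exclusion of the $I=[m]$ term come out correctly once Galois-invariance along each component is taken into account.
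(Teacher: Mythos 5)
Your Parts 1 and 2 track the paper's own argument exactly: partition $\Omega_d$ by the isomorphism class of the \'etale algebra generated, observe that a polynomial generating $L$ has exactly $|\Aut(L)|$ new roots in $L$, and for Part 2 note that a new root in $\cO_L$ forces all conjugates to be integral. No issues there.

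Your Part 3 is where the interesting content is, and your instinct that it ``hides something'' is correct, though not in the way you expect. Your characterization---$P \equiv x^n \pmod{\pi_K}$ iff every $Q_i \equiv x^{e_if_i}$, iff the new root lies in $\fkm_L = \prod_i \fkm_{L_i} = U_{L,[m]}$---is right and is in fact \emph{more} careful than the paper's one-line justification, which asserts ``all roots lie in $\cO_L$, with at least one root lies in $\fkm_L$.'' That assertion is false: $(x-1)(x-\pi_K)$ has one unit root and one root in $\fkm$, yet its reduction is $x(x-1)\neq x^2$. But once you have the correct characterization ``all components in $\fkm_{L_i}$,'' the inclusion--exclusion you are trying to write down does not exist. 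Set $J(\alpha)=\{i:\alpha_i\in\fkm_{L_i}\}$; then $\bb1_{U_{L,I}}(\alpha)=\bb1[I\subseteq J(\alpha)]$, and M\"obius inversion on the boolean lattice shows that the unique way to write $\bb1[J(\alpha)=[m]]$ as a linear combination of the $\bb1[I\subseteq J(\alpha)]$ puts coefficient $1$ on $I=[m]$ and $0$ everywhere else. Indeed a direct computation gives
\[
\sum_{I\subsetneq[m]}(-1)^{|I|+1}\,\bb1_{U_{L,I}} \;=\; (-1)^m\,\bb1_{\fkm_L}\;-\;\bb1_{\prod_i \cO_{L_i}^\times},
\]
which is not $\bb1_{\fkm_L}$; already for $m=1$ it equals $-\bb1_{\cO_L}$, so the displayed identity of the lemma would force $\beta(\sigma,K)=-\alpha(\sigma,K)$, impossible for densities. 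The correct version of Part 3 has no alternating sum at all: $\beta(\sigma,K)=\sum_{L}\bbE[Z^{\new}_{\fkm_L,L}]/|\Aut(L)|$. So the obstacle you flagged is real, but the resolution is that there is nothing to inclusion--exclude; the machinery in Section \ref{sec:generating_function} (which computes $\bbE[Z^{\new}_{\prod_i\fkm_{L_i}^{b_i},\,\prod_i L_{j_i}}]$ for arbitrary $b_i$, in particular $b_i\equiv 1$) is what one actually needs for $\beta$.
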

\begin{proof}

We first partition degree $d$ polynomials $P$ by the isomorphism classes of \'etale extensions that they generate. Thus $$\rho(\sigma,K)=\sum_{\substack{\text{isom classes of }L, \\ \pi_K \text{ has splitting type } \sigma \text{ in }L}}\rho(L,K),$$ where $\rho(L,K)$ is the proportion of degree $d$ polynomials $P$ over $K$ such that $K[x]/P \cong L$. In other words, this is the proportion of polynomials with a new root in $L$. Now, for any such $P$, it has $\Aut(L)$ roots in $L$. Thus, $\Aut(L)\rho(L,K)$ is precisely the average number of new roots of a random degree $d$ polynomial in $L$. Hence, $$\Aut(L)\rho(L,K)=\bbE[Z_{L,L}^{\new}].$$ The formula for $\alpha$ follow after noting that monic polynomials are precisely the ones for which all roots lie in the ring of integers of the associated \'etale extension. Now, monic polynomials $P(x)$ that is $x^n \mod \pi_K$ are precisely the ones for which all roots lie in $\cO_L$, with at least one root lies in $\fkm_L$. Thus, we obtain the formula for $\beta$ after applying inclusion exclusion.
\end{proof}

In the following theorem \cite[Theorem 4.8]{caruso2022zeroes}, Caruso gives a formula for $\bbE[Z_{U,L}]$, for open subsets $U \subset \cO_L$. Let $w(d)=w_{f(K)}(d)=\frac{q^{d+1}-q^d}{q^{d+1}-1}$.
\begin{theorem}[Theorem 4.5.5 \cite{caruso2022zeroes}]
\label{thm:Xavier_Integral} 
Let $K$ be a local field, and $L$ be an \'etale extension of degree $d$. Let $U \subset \cO_L$ be any open set. Then, the average number of new roots of a random degree $d$ polynomial over $K$ in $L$ is $$\bbE[Z_{U,L}^{\new}]=||D_L||w(d)\int_{\alpha \in U} \frac{1}{[\cO_L:\cO_K[\alpha]]} d\lambda_L.$$
\end{theorem}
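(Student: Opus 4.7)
My approach is a two-stage $p$-adic change of variables applied to $\bbE[Z^{\new}_{U,L}] = \int_{\Omega_d} Z^{\new}_{U,L}(P) \, d\mu(P)$. First I stratify $\Omega_d$ by the leading coefficient $c := c_d$ of $P$ to isolate the factor $w(d)$; second, I pass from the space of monic degree-$d$ polynomials to the root space via the map $\alpha \mapsto P_{\alpha/K}$, computing its Jacobian by a Galois/Vandermonde factorization to produce $\|D_L\|/[\cO_L : \cO_K[\alpha]]$.

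\textbf{Step 1 (leading coefficient gives $w(d)$).} A polynomial $P \in \Omega_d$ contributes to $Z^{\new}_{U,L}$ only when $\deg P = d$, i.e.\ $c := c_d \neq 0$. In that case $P = c \cdot Q$ with $Q$ monic of degree $d$, and if $\alpha \in U \subset \cO_L$ is a new root of $P$ then $Q = P_{\alpha/K}$ is irreducible with $K[x]/(Q) \cong L$ and lies in $\cO_K[x]$ (since $\alpha \in \cO_L$ is integral). Let $\cQ_L \subset \cO_K[x]$ denote the set of such monic $Q$ and set $N(Q, U) := \#\{\alpha \in U : Q(\alpha) = 0\}$. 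The linear change of variables $(c_0, \ldots, c_d) \mapsto (c, a_0, \ldots, a_{d-1})$ with $a_i := c_i/c$ has Jacobian $\|c\|^d$, so
$$\bbE[Z^{\new}_{U,L}] = \int_{\cO_K} \|c\|^d \, d\lambda_K(c) \cdot \int_{\cQ_L} N(Q, U) \, dQ,$$
where $dQ$ denotes Haar measure on monic degree-$d$ polynomials in $\cO_K[x]$. The first factor evaluates by a direct geometric series computation:
$$\int_{\cO_K} \|c\|^d \, d\lambda_K(c) = (1 - q^{-1}) \sum_{k \ge 0} q^{-k(d+1)} = \frac{1 - q^{-1}}{1 - q^{-d-1}} = w(d).$$

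\textbf{Step 2 (Jacobian of $\alpha \mapsto P_{\alpha/K}$).} The map $\phi: \{\alpha \in \cO_L : K[\alpha] = L\} \to \cQ_L$, $\alpha \mapsto P_{\alpha/K}$, has fibers equal to the roots of $Q$ in $L$, whence the $p$-adic coarea formula yields
$$\int_{\cQ_L} N(Q, U) \, dQ = \int_U |\det D\phi_\alpha| \, d\lambda_L(\alpha).$$
To compute $|\det D\phi_\alpha|$, I factor $\phi$ after base change to $\bar K$ as the Galois embedding $\psi: L \otimes_K \bar K \xrightarrow{\sim} \bar K^d$, $\alpha \otimes 1 \mapsto (\sigma_j(\alpha))_j$, composed with $\tilde\phi: \bar K^d \to \bar K^d$, $(z_1, \ldots, z_d) \mapsto (\text{coefficients of } \prod_j (x - z_j))$. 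Since determinants are preserved under base change, $|\det D\phi_\alpha| = |\det D\tilde\phi_{\psi(\alpha)}| \cdot |\det \psi|$. The first factor is the Vandermonde, with absolute value $\prod_{i<j}\|\sigma_i(\alpha) - \sigma_j(\alpha)\| = \|\Delta_{P_{\alpha/K}}\|^{1/2}$; for the second, taking $\{e_i\}$ to be an $\cO_K$-basis of $\cO_L$, the Galois matrix $(\sigma_j(e_i))$ satisfies $\det^2 = \det(\Tr_{L/K}(e_i e_k)) \in \cO_K$, a generator of the discriminant ideal $D_L$, so $|\det \psi| = \|D_L\|^{1/2}$. Combining and using the classical ideal identity $(\Delta_{P_{\alpha/K}}) = [\cO_L:\cO_K[\alpha]]^2 \cdot D_L$ (equivalently $\|\Delta_{P_{\alpha/K}}\| = \|D_L\|/[\cO_L:\cO_K[\alpha]]^2$), I obtain
$$|\det D\phi_\alpha| = \|D_L\|^{1/2} \cdot \|\Delta_{P_{\alpha/K}}\|^{1/2} = \frac{\|D_L\|}{[\cO_L : \cO_K[\alpha]]}.$$

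\textbf{Combining, and main obstacle.} Substituting Step 2 into Step 1 yields
$$\bbE[Z^{\new}_{U,L}] = w(d) \int_U \frac{\|D_L\|}{[\cO_L : \cO_K[\alpha]]} \, d\lambda_L(\alpha) = \|D_L\| \, w(d) \int_U \frac{d\lambda_L(\alpha)}{[\cO_L : \cO_K[\alpha]]},$$
as required. The main technical obstacle is Step 2: correctly interpreting the Jacobian of $\phi$ via base change from $K$ to $\bar K$, and combining the two square-root factors $\|D_L\|^{1/2}$ and $\|\Delta_{P_{\alpha/K}}\|^{1/2}$ into a rational expression via the conductor-discriminant identity. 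Careful bookkeeping is also needed to distinguish the integer $[\cO_L:\cO_K[\alpha]]$ from the index ideal; I would keep the normalization honest by checking the formula against a simple tame case (e.g.\ $L = K(\sqrt{\pi_K})$) before committing to the general derivation. Step 1 is comparatively routine once the leading-coefficient parametrization is set up.
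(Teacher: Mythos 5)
This statement is not proved in the paper at all: it is quoted verbatim as Caruso's result (cited as Theorem~4.8 in the text and Theorem~4.5.5 in the bracketed label of \cite{caruso2022zeroes}) and used as a black box. So there is no internal proof to compare against; what you have written is a reconstruction of the sort of argument that Caruso gives, and the paper's introduction confirms the original uses a $p$-adic Kac--Rice/coarea strategy, which is exactly the spirit of your Step~2.

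On the merits, your sketch is essentially correct. Step~1 is clean: the map $(c,a_0,\dots,a_{d-1})\mapsto(ca_0,\dots,ca_{d-1},c)$ has Jacobian $\|c\|^d$, its image contains the support of $Z^{\new}_{U,L}$ (because a new root $\alpha\in\cO_L$ forces $\deg P=d$ and $P=c_d\,P_{\alpha/K}$ with $P_{\alpha/K}\in\cO_K[x]$), and $\int_{\cO_K}\|c\|^d\,d\lambda_K=(1-q^{-1})/(1-q^{-d-1})=w(d)$. Step~2's factorization $\phi=\tilde\phi\circ\psi$ over $\bar K$, the Vandermonde Jacobian $\|\Delta_{P_{\alpha/K}}\|^{1/2}$, the Gram identity $\det(\sigma_j(e_i))^2=\det\Tr(e_ie_k)$ giving $\|\det\psi\|=\|D_L\|^{1/2}$, and the index identity $\|\Delta_{P_{\alpha/K}}\|=\|D_L\|/[\cO_L:\cO_K[\alpha]]^2$ all combine correctly to $\|D_L\|/[\cO_L:\cO_K[\alpha]]$.

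The one notational slip worth fixing: you define $N(Q,U):=\#\{\alpha\in U: Q(\alpha)=0\}$, which counts \emph{all} roots in $U$, but the identity $\bbE[Z^{\new}_{U,L}]=w(d)\int_{\cQ_L}N(Q,U)\,dQ$ and the coarea step (where $\phi$ is restricted to generators $K[\alpha]=L$) both require $N(Q,U)$ to count only \emph{new} roots, i.e.\ $\alpha$ with $K[\alpha]=L$. For $L$ a single field extension and $Q\in\cQ_L$ every root of $Q$ in $L$ is a generator, so the two agree; but for $L$ a product of fields they differ, so you should define $N(Q,U)$ as the number of new roots. This is harmless in the end because the non-generator locus has $[\cO_L:\cO_K[\alpha]]=\infty$, hence zero weight in the final integral, but the intermediate formulas should be stated with the correct count. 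With that correction, the argument goes through as written.
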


Thus, we have turned a statement about polynomials over $K$ to a statement about elements in \'etale extensions of $K$. We can apply the above formula directly to calculate $\bbE[Z_{\cO_L,L}^{\new}]$ and $\bbE[Z_{U_L,L}^{\new}]$ and thus obtain $\alpha$ and $\beta$. However, since we require $U$ to be a subset of $\cO_L$ in the above formula, we see that we cannot use this to calculate $\bbE[Z_{L,L}]$ and obtain $\rho$ directly. However, we will prove the following in Section \ref{sec:red_to_OL_thm_proof}.
\begin{theorem} 
\label{thm:reduce_to_OL}
Let $K$ be a local field and let $q=p^{f(K)}$. Let $L$ be an \'etale extension of degree $d$. So, $L=\prod_{i=1}^m L_i$, where $L_i$ are field extensions of $K$. For any $A \subset [m]$, denote $L_A = \prod_{i \in A} L_i$ and $A^c = [m] \setminus A$. Then, $$\bbE[Z_{L,L}^{\new}]=\sum_{A \subset [m]}\frac{w([L:K])}{w([L_A:K])w([L_{A^c}:K])} \bbE[Z_{\cO_{L_A},L_A}^{\new}]\bbE[Z_{\fkm_{L_{A^c}},L_{A^c}}^{\new}],$$ where we make the convention that if $A$ is empty, $\bbE[Z_{\cO_{L_A},L_A}^{\new}]=\bbE[Z_{\fkm_{L_A},L_A}^{\new}]=1$ and $[L_A:K]=0$ (so that $w([L_A:K])=1$).
\end{theorem}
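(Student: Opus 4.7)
The plan is to partition $L$ into cells based on which components of a tuple lie in $\cO_{L_i}$, use coordinate-wise inversion to transform the non-integral pieces into elements of $\fkm$, and then apply Caruso's formula (Theorem~\ref{thm:Xavier_Integral}) separately to each piece. For brevity set $d_A := [L_A:K]$ and $d_{A^c} := [L_{A^c}:K]$.

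For each $A \subseteq [m]$, define $L^{(A)} := \prod_{i \in A} \cO_{L_i} \times \prod_{i \notin A}(L_i \setminus \cO_{L_i})$. Since $L$ is the disjoint union of these cells, additivity of the root-counting function in its first argument gives
\[\bbE[Z_{L,L}^{\new}] = \sum_{A \subseteq [m]} \bbE[Z_{L^{(A)},L}^{\new}].\]
For $A = [m]$ this is directly $\bbE[Z_{\cO_L,L}^{\new}]$, matching the $A = [m]$ term of the claimed identity.

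For each proper subset $A \subsetneq [m]$, I would parameterize any polynomial $P \in \Omega_d$ with a new root $\alpha = (\alpha_A, \alpha_{A^c}) \in L^{(A)}$ by the unique factorization $P = F_A \cdot R$ with $F_A$ monic, where $F_A \in \cO_K[x]$ is the degree-$d_A$ factor whose roots are the Galois conjugates of $\alpha_A$ (all integral), and $R \in \cO_K[x]$ is the degree-$d_{A^c}$ factor whose roots are the Galois conjugates of $\alpha_{A^c}$ (all of strictly negative valuation). Reversing $R$ yields $G(x) := x^{d_{A^c}} R(1/x) \in \cO_K[x]$ whose roots $1/\gamma_j \in \fkm_{\bar K}$ still generate $L_{A^c}$ over $K$. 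Under the coordinate substitution $\beta_{A^c} := 1/\alpha_{A^c}$, this sets up a bijection between new-root data $(P, \alpha)$ with $\alpha \in L^{(A)}$ and pairs of new-root data $((F_A, \alpha_A), (G, \beta_{A^c}))$ where $\alpha_A \in \cO_{L_A}$ is a new root of $F_A$ and $\beta_{A^c} \in \fkm_{L_{A^c}}$ is a new root of $G$. I then apply Caruso's formula separately to $F_A$ on $\cO_{L_A}$ and to $G$ on $\fkm_{L_{A^c}}$, and track the change of Haar measure under the bijection. Polynomial multiplication contributes the Jacobian $||\mathrm{Res}(F_A, R)||$; coefficient reversal is measure-preserving; and the coordinate inversion on the root contributes a norm factor $||\mathrm{N}_{L_{A^c}/K}(\beta_{A^c})||^{-2}$. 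Using the multiplicativity $||D_L|| = ||D_{L_A}|| \cdot ||D_{L_{A^c}}||$ for étale products, together with standard identities relating (a) the discriminant of a product of polynomials to those of the factors and their resultant, and (b) the discriminant of a polynomial to that of its reverse (up to a power of the root norm), the Kac-Rice integrand $1/[\cO_L:\cO_K[\alpha]]$ factors as $1/([\cO_{L_A}:\cO_K[\alpha_A]] \cdot [\cO_{L_{A^c}}:\cO_K[\beta_{A^c}]])$ once all norm factors cancel, leaving only the prefactor $w([L:K])/(w([L_A:K]) w([L_{A^c}:K]))$ from Caruso's formula, which is the claim.

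The main obstacle will be this last piece of $p$-adic bookkeeping: verifying that the resultant factor from polynomial multiplication, the Jacobian from coordinate inversion, and the identity between a polynomial's discriminant and that of its reverse conspire to produce exactly the claimed decomposition of the Kac-Rice integrand, with no extraneous residue factor. A secondary subtlety is the edge cases $A = \emptyset$ and isomorphic components among the $L_i$, where the base-case conventions and the surjectivity condition defining ``new root'' must be verified directly against the stated formula.
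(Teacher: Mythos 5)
Your overall architecture shares the paper's two core ideas: decompose $L$ into the cells $L^{(A)}$ and pass each negative-valuation block to $\fkm$ by coordinate inversion (reversal of coefficients). Your expectation decomposition $\bbE[Z^{\new}_{L,L}]=\sum_A\bbE[Z^{\new}_{L^{(A)},L}]$ is correct and is in fact cleaner than the $\rho$-based bookkeeping the paper does (the paper has to insert $\Aut$-corrections because a polynomial can have new roots in several cells, whereas $Z^{\new}$ is just additive in $U$). After that, however, you diverge. The paper never touches Jacobians or resultants: it cites the Bhargava--Cremona--Fisher--Gajovi\'c lemma that the multiplication map $\cO_K[x]^1_b\times\cO_K[x]^2_{a-b}\to\{P\in\Omega_a:\bar P\in\kappa_K[x]^1_b\}$ is \emph{measure-preserving} (the resultant is automatically a unit there because the second factor is $\equiv 1\bmod\pi_K$), and then derives the three $w$-relations by elementary normalization and unit-rescaling arguments on polynomial spaces. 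Caruso's formula is applied only afterward, in Corollary \ref{cor:temp111}. Your proposal is genuinely different: you want to work with a general (non-unit-resultant) factorization $P=F_A R$, compute the Jacobian $\|\Res(F_A,R)\|$, push everything through the Kac--Rice integrand, and cancel.

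The gap you flag as ``the main obstacle'' is real and, I think, more than bookkeeping. Two specific problems. First, you write that after the norm factors cancel you are ``left with the prefactor $w([L:K])/(w([L_A:K])w([L_{A^c}:K]))$ from Caruso's formula,'' but Caruso's formula applied to $\cO_{L_A}$ and $\fkm_{L_{A^c}}$ produces the factors $w([L_A:K])$ and $w([L_{A^c}:K])$ only; the $w([L:K])$ in the numerator has to be \emph{produced} by the change of variables on the polynomial side, and your outline does not show where it comes from. In the paper this factor comes from the explicit decomposition $S=\pi_K S\sqcup\bigsqcup_{\beta\in\kappa_K^\times}S_\beta$ and the normalization-by-leading-coefficient map $\varphi$ whose pushforward is $w(\cdot)\mu^1$; you have no analogue of that step. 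Second, your bijection parameterizes $F_A$ by the \emph{monic} space $\cO_K[x]^1_{d_A}$, but Caruso's formula for $\bbE[Z^{\new}_{\cO_{L_A},L_A}]$ is an expectation over all of $\Omega_{d_A}$, not just monics; closing that mismatch is essentially the paper's item (1), which is again absent from your plan. Finally, for the factorization $\Delta_P=\Delta_{F_A}\Delta_R\Res(F_A,R)^2$ to interact correctly with the index $[\cO_L:\cO_K[\alpha]]$ you need to make sense of this index when $\alpha\notin\cO_L$ (a fractional-ideal norm, or the valuation identity $[\cO_L:\cO_K[\alpha]]^2=\Delta_P/\Delta_L$ extended to the non-integral case); you gesture at this but it must be stated and checked. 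In short, the conceptual route is plausible and genuinely different from the paper's, but as written it is an outline with three unfilled holes -- the source of $w([L:K])$, the monic-vs-arbitrary discrepancy, and the meaning of the Kac--Rice integrand off $\cO_L$ -- whereas the paper's route through the BCFG measure-preservation lemma avoids all three.
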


We are concerned mainly with tamely ramified extensions. Thus, with Theorem \ref{thm:reduce_to_OL} and the following characterization of tamely ramified extensions, we can transform our sum $$\sum_{\substack{\text{isom classes of }L, \\ \pi_K \text{ has splitting type } \sigma \text{ in }L}}\frac{\bbE[Z_{L,L}^{\new}]}{\Aut(L)},$$ which involves terms of the form $\bbE[Z_{L,L}^{\new}]$, to something that involves only terms of the form $\bbE[Z_{\cO_L,L}]$ and $\bbE[Z_{\fkm_L,L}]$, which we should be able to calculate using Caruso's formula (\ref{thm:Xavier_Integral}). The theorem below is given in \cite[Chapter 16]{hasse_zimmer_1980}.
\begin{theorem}
\label{thm:tamely_ram_extns}
Fix a uniformizer $\pi_K \in K$ and a positive integer $n$ coprime to $p$. Then, there are $n$ tamely and totally ramified extensions of degree $n$ of $K$ in $\bar K$. These extensions may be split into $\gcd(q-1,n)$ isomorphism classes, each class isomorphic to $K[(\zeta_{q-1}^j \pi_K)^{1/n}]$, for some $0 \leq j < \gcd(q-1,n)$. 
\end{theorem}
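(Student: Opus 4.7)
The plan is to reduce to a Kummer-theoretic calculation, carefully distinguishing subfields of $\bar K$ from isomorphism classes. First I would prove that any totally tamely ramified extension $L/K$ of degree $n$ is of the form $K[(\tilde c \pi_K)^{1/n}]$ for some $\tilde c \in \mu_{q-1}$. The argument begins by writing $\pi_L^n = u \pi_K$ for a uniformizer $\pi_L$ of $L$ and $u \in \cO_L^\times$; then factor $u = \tilde c \cdot (1+w)$ with $\tilde c \in \mu_{q-1}$ the Teichm\"uller lift of the residue of $u$ (which lies in $\bbF_q$, the common residue field of $K$ and $L$) and $w \in \fkm_L$, extract an $n$-th root $v \in 1+\fkm_L$ of $1+w$ via Hensel's lemma (using $\gcd(n,p) = 1$), and set $\alpha := \pi_L/v$. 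Then $\alpha^n = \tilde c \pi_K \in K$, and since $K(\alpha) \subset L$ is totally ramified of degree exactly $n$, we conclude $L = K[(\tilde c\pi_K)^{1/n}]$.

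Next I would identify isomorphism classes by establishing that $K[(c_1\pi_K)^{1/n}] \cong K[(c_2\pi_K)^{1/n}]$ if and only if $c_1/c_2 \in \mu_{q-1}^n$, for $c_1,c_2 \in \mu_{q-1}$. For the forward direction, after embedding both in $\bar K$ and replacing one by a $K$-conjugate, both sit inside a common subfield $L$ containing uniformizers $\alpha_i$ with $\alpha_i^n = c_i\pi_K$; then $\alpha_1/\alpha_2 \in \cO_L^\times$ and $(\alpha_1/\alpha_2)^n = c_1/c_2 \in \mu_{q-1}$. Reducing modulo $\fkm_L$ and Teichm\"uller-lifting yields some $\tilde u \in \mu_{q-1}$ with $\tilde u^n = c_1/c_2$. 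Conversely, if $c_1/c_2 = \tilde u^n$ with $\tilde u \in \mu_{q-1} \subset K$, then $\tilde u \cdot (c_2\pi_K)^{1/n}$ is an $n$-th root of $c_1\pi_K$, so the two subfields coincide for that choice of roots. This identifies the set of isomorphism classes with $\mu_{q-1}/\mu_{q-1}^n$, a cyclic group of order $\gcd(q-1,n)$ whose cosets are represented by $\zeta_{q-1}^j\pi_K$ for $0 \le j < \gcd(q-1,n)$.

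Finally I would count subfields of $\bar K$ within each isomorphism class. For $L = K[(c\pi_K)^{1/n}]$, the orbit–stabilizer action of $\Aut_K(L)$ on $\Hom_K(L, \bar K)$ shows that the number of subfields of $\bar K$ isomorphic to $L$ over $K$ equals $[L:K]/|\Aut_K(L)|$. Each $K$-automorphism sends $(c\pi_K)^{1/n}$ to $\zeta \cdot (c\pi_K)^{1/n}$ for some $n$-th root of unity $\zeta \in L$; since $L/K$ is totally ramified, $L$ has residue field $\bbF_q$ and so contains exactly $\mu_n \cap \mu_{q-1} = \mu_{\gcd(n,q-1)}$ among the $n$-th roots of unity of $\bar K$. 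Hence $|\Aut_K(L)| = \gcd(n,q-1)$, each class contributes $n/\gcd(n,q-1)$ subfields, and the total is $\gcd(n,q-1) \cdot n/\gcd(n,q-1) = n$ as claimed.

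The main subtlety lies in the isomorphism criterion: one must verify that the ambiguity between different generators is controlled exactly by $\mu_{q-1}^n$ rather than by some larger subgroup of $K^\times$. The key input is that totally ramified extensions preserve the residue field, so Teichm\"uller lifting yields a bijection between $n$-th powers in $\bbF_q^\times$ and $n$-th powers in $\mu_{q-1}$; this is precisely what allows the ratio $\alpha_1/\alpha_2$ to be detected inside $\mu_{q-1}$. Once this is pinned down, the remaining steps — Hensel's lemma and orbit–stabilizer for a single Kummer generator — are routine.
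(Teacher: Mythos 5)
Your proof is correct. Note that the paper does not actually prove this statement — it cites it as \cite[Chapter 16]{hasse_zimmer_1980} — so there is no argument in the paper itself to compare against. Your three-step Kummer-theoretic argument (reduction to a generator $\alpha$ with $\alpha^n = \tilde{c}\pi_K$ via Hensel applied to the unit part of $\pi_L^n/\pi_K$; the isomorphism criterion $c_1/c_2 \in \mu_{q-1}^n$ via Teichm\"uller lifting the ratio of the two uniformizers; and the orbit--stabilizer count showing each isomorphism class contains $n/\gcd(n,q-1)$ subfields of $\bar K$ because $L$ contains exactly $\mu_{\gcd(n,q-1)}$ among $n$-th roots of unity) is the standard approach one would find in such references, and each step is carried out correctly, including the observation that $K(\alpha) = L$ follows from the Eisenstein criterion giving $[K(\alpha):K] = n$. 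The one point worth emphasizing when writing this up fully is the identification you flagged: that a subfield of $\bar K$ isomorphic to $L$ over $K$ is precisely a $K$-conjugate of $L$, which relies on extending $K$-embeddings to automorphisms of $\bar K$; that is what makes the orbit--stabilizer count legitimate.
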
 
Since every tamely ramified extension is an unramified extension followed by a tamely and totally ramified extension, we see that every tamely ramified extension $L$ with ramification index $e_0$ and inertia degree $f_0$ over $K$ is of the form $K[\zeta_{q^{f_0}-1}, (\zeta_{q^{f_0}-1}^j \pi_K)^{1/e_0}]$ for some $0 \leq j < \gcd(q^{f_0)}-1,e_0)$. We will now denote this field as $L_j$. Thus, extensions $L/K$ with splitting type $\sigma$ are of the form $\prod_i L_{j_i}$, where $0 \leq j_i \leq \gcd(q^{f_i}-1,e_i)$. This also shows that $|\Aut(L_{j_i}/K)|=f_i\gcd(q^{f_i}-1,e_i)$.

Let $$\perm(\sigma)=\prod_{e,f} (\text{occurrences of } e^f \text{ in }\sigma)!.$$ Now we are ready to transform our formula in terms of $\bbE[Z_{L,L}]$, to a formula in terms $\bbE[Z_{\cO_L,L}]$ and $\bbE[Z_{\fkm_L,L}]$. With Lemma \ref{lem:temp1}, Theorem \ref{thm:reduce_to_OL}, and Theorem \ref{thm:tamely_ram_extns}, we have

\begin{cor}
\label{cor:temp111}
Let $\sigma=(e_1^{f_1}, \dots, e_m^{f_m})$ be a splitting type with $p \nmid e_i$. Let $d=\sum e_if_i$. Then, $$\rho(\sigma,K)=\frac{w(d)\sum_{A \subset [m]} N_{1,A}'N_{2,A}'}{\perm(\sigma)}$$ where $$N_{1,A}'= \frac{\sum_{\substack{L_A = \prod_{i \in A} L_{j_i}\\ j_i \in [\gcd(q^{f_i}-1,e_i)]}}\bbE[Z_{\cO_{L_A},L_A}^{\new}]}{\prod_{i \in A}f_i\gcd(q^{f_i}-1,e_i)w([L_A:K])}$$ and $$N_{2,A}'=\frac{\sum_{\substack{L_{A^c} = \prod_{i \notin A} L_{j_i}\\ j_i \in [\gcd(q^{f_i}-1,e_i)]}}\bbE[Z_{\fkm_{L_{A^c}},L_{A^c}}^{\new}]}{\prod_{i \notin A} f_i\gcd(q^{f_i}-1,e_i)w([L_{A^c}:K])}.$$
\end{cor}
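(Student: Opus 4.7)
The plan is to chain together Lemma \ref{lem:temp1}(1), Theorem \ref{thm:reduce_to_OL}, and Theorem \ref{thm:tamely_ram_extns}, with careful bookkeeping of automorphism counts. The argument is an algebraic rearrangement; the mathematical content is supplied entirely by these three ingredients.

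I would begin from Lemma \ref{lem:temp1}(1), which expresses
\[\rho(\sigma,K)=\sum_{L}\frac{\bbE[Z_{L,L}^{\new}]}{|\Aut(L)|},\]
the sum running over isomorphism classes of \'etale $K$-algebras $L$ with splitting type $\sigma$. Since every $e_i$ is coprime to $p$, each such $L$ is tame, so Theorem \ref{thm:tamely_ram_extns} identifies each field factor of type $(e_i,f_i)$ as one of the $\gcd(q^{f_i}-1,e_i)$ algebras $L_{j_i}$, and gives $|\Aut(L_{j_i})|=f_i\gcd(q^{f_i}-1,e_i)$. The central bookkeeping step is to pass from the isomorphism-class sum to a sum over ordered tuples $(j_1,\dots,j_m)$: using the product decomposition $|\Aut(L)|=|S_L|\prod_i|\Aut(L_{j_i})|$, where $|S_L|$ counts permutations of the factors preserving isomorphism class, together with the fact that the number of ordered tuples collapsing onto a fixed isomorphism class equals $\perm(\sigma)/|S_L|$ (a multinomial count, grouped by type $(e,f)$), the two $|S_L|$ factors cancel and one obtains
\[\rho(\sigma,K)=\frac{1}{\perm(\sigma)}\sum_{(j_1,\dots,j_m)}\frac{\bbE[Z_{L,L}^{\new}]}{\prod_{i=1}^{m}f_i\gcd(q^{f_i}-1,e_i)}.\]

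Next I would apply Theorem \ref{thm:reduce_to_OL} to expand $\bbE[Z_{L,L}^{\new}]$ as a sum over subsets $A\subset[m]$ of terms involving $\bbE[Z_{\cO_{L_A},L_A}^{\new}]\bbE[Z_{\fkm_{L_{A^c}},L_{A^c}}^{\new}]$ weighted by $w(d)/(w([L_A:K])w([L_{A^c}:K]))$; the factor $w(d)$ pulls out in front since $[L:K]=d$ is constant across the sum. After swapping the summation orders and splitting each product $\prod_{i=1}^{m}$ as $\prod_{i\in A}\cdot\prod_{i\in A^c}$, the inner double sum factors into two independent sums, one indexed by $(j_i)_{i\in A}$ and one by $(j_i)_{i\in A^c}$; these are exactly $N_{1,A}'$ and $N_{2,A}'$, yielding the claimed formula.

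The only real subtlety is the first combinatorial identity: because factors of $L$ with distinct indices but common type $(e_i,f_i)$ may or may not coincide, the count $|S_L|$ depends on $L$, and one must verify that $|S_L|$ appears symmetrically in both the automorphism factorization and the orbit count so as to cancel uniformly. A short case analysis grouping indices by type $(e,f)$ confirms this, after which the remainder of the proof is routine.
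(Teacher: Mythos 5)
Your proposal is correct and follows essentially the same route as the paper: Lemma \ref{lem:temp1} to pass to a sum of $\bbE[Z_{L,L}^{\new}]/|\Aut(L)|$ over isomorphism classes, the automorphism-count argument (your $|S_L|$ is exactly the paper's index $|\Aut(L)/\prod_i\Aut(L_{j_i})|$) to convert this to a sum over ordered tuples $(j_i)$ weighted by $1/(\perm(\sigma)\prod_i|\Aut(L_{j_i})|)$, then Theorem \ref{thm:reduce_to_OL} followed by swapping and factoring the sums. The only cosmetic difference is that you make the semidirect-product bookkeeping for $\Aut(L)$ slightly more explicit, which is harmless and matches the paper's intent.
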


\begin{proof}
We first have by Lemma \ref{lem:temp1} that $$\rho(\sigma,K)=\sum_{\substack{\text{isom classes of }L, \\ \pi_K \text{ has splitting type } \sigma \text{ in }L}}\frac{\bbE[Z_{L,L}^{\new}]}{\Aut(L)}.$$ Now as $j_i$ ranges in $[\gcd(q^{f_i}-1),e_i)]$, $L=\prod_i L_{j_i}$ ranges among \'etale extensions of splitting type $\sigma$ over $K$. However, this will hit the same extension $\perm(\sigma) \left| \frac{\Aut(L)}{\prod_i \Aut(L_{j_i})}\right|^{-1}$ times. Thus, \begin{align*}
    \sum_{\substack{\text{isom classes of }L, \\ \pi_K \text{ has splitting type } \sigma \text{ in }L}}\frac{\bbE[Z_{L,L}^{\new}]}{\Aut(L)}&=\perm(\sigma)^{-1}\prod_i |\Aut(L_{j_i})|^{-1}\sum_{\substack{L=\prod_i L_{j_i}\\ j_i \in [\gcd(q^{f_i}-1,e_i)]}}\bbE[Z_{L,L}^{\new}]\\
    &=\perm(\sigma)^{-1}\prod_i \frac{1}{f_i\gcd(q^{f_i}-1,e_i)}\sum_{\substack{L=\prod_i L_{j_i}\\ j_i \in [\gcd(q^{f_i}-1,e_i)]}}\bbE[Z_{L,L}^{\new}]
\end{align*}

Now, by Theorem \ref{thm:reduce_to_OL}, \begin{align*}
    \sum_{\substack{L=\prod_i L_{j_i}\\ j_i \in [\gcd(q^{f_i}-1,e_i)]}}\bbE[Z_{L,L}^{\new}]&=\sum_{\substack{L=\prod_i L_{j_i}\\ j_i \in [\gcd(q^{f_i}-1,e_i)]}}\sum_{A \subset [m]} \frac{w(d)}{w([L_A:K])w([L_{A^c}:K])}\bbE[Z_{\cO_{L_A},L_A}^{\new}]\bbE[Z_{\fkm_{L_{A^c}},L_{A^c}}^{\new}]\\
    &=w(d)\sum_{A \subset [m]}\sum_{\substack{L=\prod_i L_{j_i}\\ j_i \in [\gcd(q^{f_i}-1,e_i)]}} \frac{\bbE[Z_{\cO_{L_A},L_A}^{\new}]}{w([L_A:K])}\frac{\bbE[Z_{\fkm_{L_{A^c}},L_{A^c}}^{\new}]}{w([L_{A^c}:K])}
\end{align*}
where $L_A=\prod_{i \in A} L_{j_i}$ and $L_{A^c} = \prod_{i \in A^c} L_{j_i}$. Thus, factoring the inner sum into a product, we get $$\sum_{\substack{L=\prod_i L_{j_i}\\ j_i \in [\gcd(q^{f_i}-1,e_i)]}}\bbE[Z_{L,L}^{\new}]=w(d)\sum_{A \subset [m]} \left(\sum_{\substack{L_A = \prod_{i \in A} L_{j_i}\\ j_i \in [\gcd(q^{f_i}-1,e_i)]}}\frac{\bbE[Z_{\cO_{L_A},L_A}^{\new}]}{w([L_A:K])}\right)\left(\sum_{\substack{L_{A^c} = \prod_{i \in A^c} L_{j_i}\\ j_i \in [\gcd(q^{f_i}-1,e_i)]}}\frac{\bbE[Z_{\fkm_{L_{A^c}},L_{A^c}}^{\new}]}{w([L_{A^c}:K])}\right).$$
\end{proof}

Although the formula in Corollary \ref{cor:temp111} seems complicated, one just needs to note that since by definition $w(d)$ is a rational function of $q=p^{f(K)}$, Theorem \ref{thm:main} for $\rho$ will be proven if we can show that $$\frac{1}{\prod_{i \in A} f_i\gcd(q^{f_i}-1,e_i)}\sum_{\substack{L_A = \prod_{i \in A} L_{j_i}\\ j_i \in [\gcd(q^{f_i}-1,e_i)]}}\bbE[Z_{\cO_{L_A},L_A}^{\new}]$$ and $$\frac{1}{\prod_{i \in A^c} f_i\gcd(q^{f_i}-1,e_i)}\sum_{\substack{L_{A^c} = \prod_{i \in A^c} L_{j_i}\\ j_i \in [\gcd(q^{f_i}-1,e_i)]}}\bbE[Z_{\fkm_{L_{A^c}},L_{A^c}}^{\new}]$$ are rational functions in terms of $q$, and that these expressions depends on $K$ through $e(K)$ and $f(K)$. Similarly, following the same process for $\alpha$ and $\beta$, one simply needs to show that $$\prod_i \frac{1}{f_i\gcd(q^{f_i}-1,e_i)}\sum_{\substack{L=\prod_i L_{j_i}\\ j_i \in [\gcd(q^{f_i}-1,e_i)]}}\bbE[Z_{\cO_L,L}^{\new}]$$ and $$\prod_i \frac{1}{f_i\gcd(q^{f_i}-1,e_i)}\sum_{\substack{L=\prod_i L_{j_i}\\ j_i \in [\gcd(q^{f_i}-1,e_i)]}}\bbE[Z_{U_{L,I},L}^{\new}]$$ ($U_{L,I}$ defined as in Lemma \ref{lem:temp1}) are rational functions in terms of $q$, and that these expressions depends on $K$ through $e(K)$ and $f(K)$. In fact, we will show something more general, that this is true for $$\frac{1}{\prod_{i=1}^m f_i\gcd(q^{f_i}-1,e_i)}\sum_{\substack{j_i \in [\gcd(q^{f_i}-1,e_i)]}}\bbE[Z_{\prod_{i=1}^m \fkm_{L_i}^{b_i},\prod_i L_{j_i}}^{\new}]$$ for any tamely ramified splitting type $\sigma=(e_1^{f_1}, \dots, e_m^{f_m})$ and nonnegative integers $b_1, \dots, b_m$.


Note that combining Theorem \ref{thm:Xavier_Integral} with Corollary \ref{cor:temp111} gives us the formula
\begin{equation} \label{eqn:rho}
    \rho(\sigma,K)=\frac{w(d)\sum_{A \subset [m]}N_{1,A}N_{2,A}}{\perm(\sigma)q^{\sum_{i=1}^m (e_i-1)f_i}\prod_{i=1}^m f_i},
\end{equation} where $$N_{1,A}=\frac{\sum_{\substack{L_A = \prod_{i \in A} L_{j_i}\\ j_i \in [\gcd(q^{f_i}-1,e_i)]}}\int_{\alpha \in \cO_{L_A}} \frac{1}{[\cO_{L_A}:\cO_K[\alpha]]} d\lambda_{L_A}}{\prod_{i \in A}\gcd(q^{f_i}-1,e_i)}$$ and $$N_{2,A}=\frac{\sum_{\substack{L_{A^c} = \prod_{i \notin A} L_{j_i}\\ j_i \in [\gcd(q^{f_i}-1,e_i)]}}\int_{\alpha \in m_{L_{A^c}}} \frac{1}{[\cO_{L_{A^c}}:\cO_K[\alpha]]} d\lambda_{L_{A^c}}}{\prod_{i \notin A} \gcd(q^{f_i}-1,e_i)}.$$

Finally, the integral in Caruso's formula (\ref{thm:Xavier_Integral}) involves terms of the form $[\cO_L:\cO_K[\alpha]]$, which we will transform with the following lemma.
\begin{lemma}
\label{lem:cond_disc_order}
Let $P$ be a monic polynomial in $\cO_K[x]$, which generates an \'etale extension $L=\prod_{i}L_i/K$. Let $\Delta_P$ be the polynomial discriminant of $P$, and let $\Delta_L$ be the discriminant of the \'etale extension $L$ (aka $N(\prod_i \pi_{L_{j_i}}^{e(L_{j_i})-1})$, where $\pi_L \in \cO_L$ is a uniformizer). Then, $\pi_K^{2\log_q[\cO_L:\cO_K[x]/P]}\Delta_L=\Delta_P$. 
\end{lemma}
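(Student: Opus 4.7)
The plan is to derive the identity from the classical change-of-basis formula for the trace pairing applied to the inclusion $\cO_K[\alpha]\subset \cO_L$, where $\alpha=x\bmod P$.

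First I would observe that since $P$ is monic of degree $d=[L:K]$ and generates $L$, we have $\cO_K[x]/P\cong \cO_K[\alpha]$ as $\cO_K$-algebras, and the image sits inside $\cO_L$ as a full-rank $\cO_K$-submodule (full rank because $L=K[\alpha]$ by hypothesis). Both $\cO_K[\alpha]$ and $\cO_L$ are free $\cO_K$-modules of rank $d$, so I pick the basis $\{1,\alpha,\alpha^2,\dots,\alpha^{d-1}\}$ of $\cO_K[\alpha]$ and any $\cO_K$-basis $\{e_1,\dots,e_d\}$ of $\cO_L$, and let $M\in M_d(\cO_K)$ be the change-of-basis matrix expressing the former in terms of the latter.

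Next I would compute the Gram matrix of the trace pairing $B(u,v)=\Tr_{L/K}(uv)$ in both bases. In the first basis one obtains $(\Tr_{L/K}(\alpha^{i+j}))_{0\le i,j\le d-1}$, whose determinant equals $\Delta_P$ by the standard Vandermonde identity for polynomial discriminants. In the second basis one obtains $(\Tr_{L/K}(e_ie_j))_{i,j}$, whose determinant generates $\Delta_L$ by definition of the discriminant of the \'etale $K$-algebra $L$; this extends without modification to the case $L=\prod_i L_i$ because both the trace and the symmetric-function identities split componentwise across the factors. The change-of-basis identity then gives
\[
\Delta_P = (\det M)^2\,\Delta_L
\]
as an equality of ideals in $\cO_K$ (equivalently, up to a unit square).

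Finally, I would compute the index. The choice of bases above identifies the inclusion $\cO_K[\alpha]\hookrightarrow \cO_L$ with multiplication by $M$ on $\cO_K^d$, so Smith normal form yields $\cO_L/\cO_K[\alpha]\cong \cO_K^d/M\cO_K^d\cong \bigoplus_{i=1}^d \cO_K/\pi_K^{n_i}$ with $\sum_i n_i=v_K(\det M)$. Since $|\cO_K/\pi_K^{n_i}|=q^{n_i}$, this gives $[\cO_L:\cO_K[\alpha]]=q^{v_K(\det M)}$, hence $\log_q[\cO_L:\cO_K[x]/P]=v_K(\det M)$. Writing $(\det M)^2$ as a unit times $\pi_K^{2v_K(\det M)}$ and substituting into the discriminant identity yields the stated equality.

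There is no real conceptual obstacle here; the proof is a routine classical calculation. The only mild subtlety is checking that the two key inputs, namely $\Delta_P=\det(\Tr_{L/K}(\alpha^{i+j}))$ and $\Delta_L=\det(\Tr_{L/K}(e_ie_j))$, remain valid for $L$ an \'etale algebra rather than a field, which follows from the componentwise additivity of the trace on $L=\prod_i L_i$ and the fact that $\Delta_P$ factors compatibly as $\prod_i \Delta_{P_i}\cdot \prod_{i<j}\mathrm{Res}(P_i,P_j)^2$ with $P=\prod_i P_i$.
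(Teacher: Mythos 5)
The paper states Lemma~\ref{lem:cond_disc_order} without proof, treating it as the classical index--discriminant identity, so there is no in-text argument to compare against; your proposal supplies the standard proof and it is correct. Each step checks out: $\cO_K[x]/P\cong\cO_K[\alpha]$ and $\cO_L$ are both free $\cO_K$-modules of rank $d$ because $\cO_K$ is a DVR and $P$ is monic of degree $d$ generating $L$; the Gram determinant of the trace pairing in the power basis is $\Delta_P$ by the Vandermonde identity, while in an integral basis it is $\Delta_L$ by definition; the change-of-basis relation gives $\Delta_P=(\det M)^2\Delta_L$; and Smith normal form over the DVR $\cO_K$ gives $[\cO_L:\cO_K[\alpha]]=q^{v_K(\det M)}$, so $2\,v_K(\det M)=2\log_q[\cO_L:\cO_K[x]/P]$. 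Your remark handling the \'etale case $L=\prod_i L_i$ is also right: the trace splits as a sum over factors, and $\Delta_P=\prod_i\Delta_{P_i}\cdot\prod_{i<j}\Res(P_i,P_j)^2$, so both sides of the Gram-determinant comparison remain valid without modification. The one point worth being explicit about is that the lemma's equality $\pi_K^{2\log_q[\cO_L:\cO_K[x]/P]}\Delta_L=\Delta_P$ holds up to a unit in $\cO_K^\times$ (equivalently as an equality of ideals), exactly as you note; this is all that is needed downstream, where the paper only extracts $v_K(\Delta_P)$.
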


Thus, combining Theorem \ref{thm:Xavier_Integral} and Lemma \ref{lem:cond_disc_order}, we can reduce our problem to showing that $$\frac{q^{\frac{\sum(e_i-1)f_i}{2}}}{\prod_{i=1}^m \gcd(q^{f_i}-1,e_i)}\sum_{\substack{L=\prod_i L_{j_i}\\ j_i \in [\gcd(q^{f_i}-1,e_i)]}}\int_{\alpha \in \prod_i \fkm_{L_{j_i}}^{b_i}} q^{-\frac{1}{2}v_K(\Delta_{P_{\alpha/K}})} d\lambda_L$$ is a rational function in terms of $q$ and depends on $K$ only through $e(K)$ and $f(K)$. Sections \ref{sec:generating_function} and \ref{sec:proof_of_main_result} are dedicated to the calculation of this sum and showing that it is rational function in terms of $q$.

\section{Generating Functions and Recurrence Relations} \label{sec:generating_function}

First, we will make a switch from relative notation to absolute notation. So, let $\{L_i\}_{i=1}^m$ be field extensions of $K$. We define $e_i,f_i$ to be the \textit{absolute} ramification index and inertia degree of $L_i$. Now, for nonnegative integers $b_i$, we define $$a(\{L_i\}_{i=1}^m,K,c,\{b_i\}_{i=1}^m):=\mu(\{\alpha \in \prod_{i \in [m]} \fkm_{L_i}^{b_i}: v_p(\Delta_{P_{\alpha/K}})=c\}).$$ Also, fix two positive integers $e_{\base}$, $f_{\base}$, representing the ramification index and inertia degree of our base extension, which we will denote by $K$. Then, let $\sigma=(e_1^{f_1}, \dots, e_m^{f_m})$ be a tame splitting type, so that $p \nmid e_i/e_{\base}$ for any $i$. Let $e_{i,\rel}=e_i/e_{\base}$ and $f_{i,\rel}=f_i/f_{\base}$. Then, we define $$a(\sigma,e_{\base},f_{\base},c,\{b_i\}_{i=1}^m,p) = \frac{1}{\prod_{i \in [m]} \gcd(p^{f_i}-1,e_{i,\rel})}\sum_{j_i \in [\gcd(p^{f_i}-1,e_{i,\rel})]} a(\{L_{j_i}\},K,c,\{b_i\}_{i=1}^m)$$ where $K$ is any $p$-adic field with $e(K)=e_{\base}$ and $f(K)=f_{\base}$. As the notation suggests, the value of this expression will be shown to only depend on $K$ through $e(K)$ and $f(K)$. We will often omit the dependence of $a$ on $p$.

\begin{example} \label{ex:base_case_a}
As a base case, we can see that from the definition, $$a((e^f),e,f,c,\{b\})=a(\{K\},K,c,\{b\})=\begin{cases}
p^{-bf} & c = 0 \\
0 & c \neq 0
\end{cases},$$ for any $j$ since the discriminants of all degree $1$ polynomials are all $1=p^0$.
\end{example}

Our main proposition of this section, as well as the main technical part of our paper is
\begin{prop}
\label{prop:induction_a_with_P}
Fix a tamely ramified splitting type $\sigma=(e_1^{f_1}, \dots, e_m^{f_m})$. Then, the definition of $$a(\sigma,e_{\base},f_{\base},c,\{b_i\})$$ doesn't depend on the choice of $K$ (as long as $e(K)=e_{\base}$ and $f(K)=f_{\base}$). Moreover, we have the following recurrence.

$$a(\sigma,e_{\base},f_{\base},c,\{b_i\})=\sum_{\substack{\textup{partitions } \{E_l\}_{l=1}^k \text{ of }[m] \\ \{n_{O,l}\}_{l=1}^k \in \bbZ_{\geq 1}^k}} P(\sigma,\{E_l\},\{b_i\},\{n_{O,l}\},p)$$$$\sum_{\substack{\{c_l\}_{l=1}^k \in \bbZ_{\geq 0}^k \\ \sum n_{O,l} c_l=r(c)}} \prod_l a(\sigma_l, e(K)h(\beta,n_{O,l}), f(K)\frac{n_{O,l}}{h(\beta,n_{O,l})}, c_l, \{b_i+\bb1_{I}(i)\}_{i \in E_{l}}).$$ Where $\bb1_{I}(i)$ denotes the indicator function of $I$ and $\sigma_l=(e_i^{f_i})_{i \in E_l}$.
\end{prop}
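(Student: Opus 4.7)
The plan is to prove both claims simultaneously by induction on the lexicographic pair $(c,-\sum_i b_i)$, performing the recursion sketched in the introduction and summing over choices of leading Teichm\"uller coefficient. Fix a local field $K$ with $e(K)=e_{\base}$, $f(K)=f_{\base}$, enumerate the extensions $L=\prod_i L_{j_i}$ of splitting type $\sigma$ using Theorem \ref{thm:tamely_ram_extns}, choose uniformizers $\pi_{L_i}$, and write a generic $\alpha=(\alpha_1,\dots,\alpha_m) \in \prod_i \fkm_{L_i}^{b_i}$ in Teichm\"uller form $\alpha_i = a_{i,b_i}\pi_{L_i}^{b_i} + (\text{higher order terms})$. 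For each $i$, the leading monomial $a_{i,b_i}\pi_{L_i}^{b_i}$ generates a subextension $K \subset F_i \subset L_i$ with a well-defined Galois orbit in $\bar K$ over $K$.

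The first main step is to stratify $\prod_i \fkm_{L_i}^{b_i}$ by the joint combinatorial data of the leading terms $\{a_{i,b_i}\pi_{L_i}^{b_i}\}_i$. Two indices $i,i' \in [m]$ belong to the same block of a partition $\{E_l\}$ of $[m]$ if their leading monomials are Galois conjugate in $\bar K$; within a block $E_l$, let $n_{O,l}$ be the common size of that orbit over $K$, and let $h(\beta,n_{O,l})$ be the ramification index of the orbit's field of definition over $K$, so that the absolute ramification and inertia of that field are $e(K)h(\beta,n_{O,l})$ and $f(K)n_{O,l}/h(\beta,n_{O,l})$. I would verify, by a direct count of tame Teichm\"uller coefficients using Theorem \ref{thm:tamely_ram_extns}, that the total measure of the stratum carrying data $(\{E_l\},\{n_{O,l}\},I)$, averaged over $j_i \in [\gcd(p^{f_i}-1,e_{i,\rel})]$ and normalized by the product of these gcds, is exactly the factor $P(\sigma,\{E_l\},\{b_i\},\{n_{O,l}\},p)$. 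Independence from the particular $K$ will fall out here: the averaging over $j_i$ collapses all choices of $K$ with the same $(e_{\base},f_{\base})$ to the same count, which is what motivates the definition of $a(\sigma,e_{\base},f_{\base},\ldots)$.

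The second main step is to compute $v_p(\Delta_{P_{\alpha/K}})$ on a fixed stratum, generalizing the sample computation for $L=\bbQ_p[\zeta_{p^3-1},p^{1/2}]$. As in the worked example, pairs of Galois conjugates of $\alpha$ lying in different leading-term orbits contribute a deterministic total valuation $r_0=r_0(\sigma,\{E_l\},\{b_i\},\{n_{O,l}\})$, because the difference of such a pair is forced by its leading Teichm\"uller term and the tame hypothesis $p \nmid e_{i,\rel}$ prevents cancellation. Pairs inside the same orbit see their leading term cancel, and their tails form a generic element of $\prod_{i \in E_l} \fkm_{L_i}^{b_i+\bb1_I(i)}$ (where $I$ records which indices had nonzero leading coefficient) viewed as an \'etale extension of the orbit's field of definition. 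Thus the total valuation is $r_0 + \sum_l n_{O,l} c_l$, where $c_l$ is the discriminant valuation of the block-$l$ tail over its base; setting $r(c)=c-r_0$ and applying Fubini across blocks reduces to an independent product and yields exactly the recurrence, with each recursive call invoked either at strictly larger $\sum b_i$ (and the same $c$) or at strictly smaller $c_l<c$, so the induction closes.

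The main obstacle is the second step: verifying that Galois orbits of leading monomials really do decouple $\Delta_{P_{\alpha/K}}$ into the cross contribution $r_0$ and the within-orbit contributions $c_l$, and that the tails behave as claimed as elements of the smaller \'etale extension over the orbit's field of definition. This is where the tame hypothesis and the ``carousel'' picture of Figure \ref{fig:recursion} are essential, and where the parametrization by $h(\beta,n_{O,l})$ has to be matched to the actual subfields generated by leading Teichm\"uller monomials of the form $a_{i,b_i}\pi_{L_i}^{b_i}$, keeping careful track of which denominators $b_i$ give ramified versus unramified contributions. Once this decoupling is in hand, base case Example \ref{ex:base_case_a} and the reduction from Equation \eqref{eqn:temp131313222} (invoked whenever $b_i$ exceeds $e_i$, to bound $c_l$ strictly below $c$) complete the induction, giving both independence from $K$ and the claimed recurrence.
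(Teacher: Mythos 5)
Your overall structure matches the paper's: stratify $\prod_i \fkm_{L_i}^{b_i}$ by the joint Galois-orbit data of the leading Teichm\"uller monomials, separate the discriminant product into the deterministic cross-orbit part and the within-orbit parts, recurse on the tails over each orbit's field of definition, and identify the stratum measure with $P(\sigma,\{E_l\},\{b_i\},\{n_{O,l}\},p)$. The sign that you have internalized the carousel picture is visible and the outline of the discriminant-valuation decomposition is right.

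However, there is a genuine gap in how you obtain the clean recurrence in terms of the averaged quantity $a(\sigma_l, e(K)h(\beta,n_{O,l}), f(K)n_{O,l}/h(\beta,n_{O,l}), c_l, \cdot)$. After conditioning on a fixed stratum, the within-orbit tails live in extensions $\psi_{\gamma_i}(L_{j_i})$ of $K[\gamma]$, where $\psi_{\gamma_i}$ is the Galois element moving $\gamma_i$ to the chosen orbit representative $\gamma$. Different pairs $(\{j_i\},\{a_{i,b_i}\})$ in the same stratum give \emph{different} choices of these extensions, and the recursive term on the right-hand side of the proposition is an average over \emph{all} isomorphism classes of tamely ramified extensions of $K[\gamma]$ with the appropriate invariants. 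You assert that ``independence from the particular $K$ will fall out'' from the averaging over $j_i$ and that ``applying Fubini across blocks reduces to an independent product,'' but this requires a concrete arithmetic input that you never supply: namely that as $(\{j_i\},\{a_{i,b_i}\})$ ranges over a fixed stratum, the induced family $\{\psi_{\gamma_i}(L_{j_i})\}_{i \in E_{\bar\gamma}}$ of extensions of $K[\gamma]$ hits every admissible tuple of isomorphism classes the same number of times. This is exactly the content of the paper's Lemma \ref{lem:temp133}, which is proved by an explicit bijection between the fibers $B_{\{j_i^0\}}$ and $B_{\{j_i^1\}}$ via a careful manipulation of uniformizers $\pi_{L_{j_i}} = (\zeta^{j_i}\pi_K)^{1/e_{i,\rel}}$, roots of unity, and gcd conditions. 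Without this equidistribution, the sum over a stratum does not collapse to $|B|$ times the averaged $a(\sigma_l,\dots)$ term and the $K$-independence claim is not actually established. You would need to state and prove something like Lemma \ref{lem:temp133} to close this.

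A secondary imprecision: you set up the induction on $(c,-\sum_i b_i)$ and invoke Equation \eqref{eqn:temp131313222} inside the proof of the proposition to bound $c_l<c$. In the paper's organization, the content of Equation \eqref{eqn:temp131313222} (formalized as Lemma \ref{lem:recursion_reduce_i}) is proved \emph{after} the proposition and is used only later, in Proposition \ref{prop:generating_function_recursion}, to make the recursion terminate when computing $G$. The proof of Proposition \ref{prop:induction_a_with_P} itself rests on induction on $\deg \sigma$ (so that the terms for nontrivial partitions are already known to be $K$-independent), and the trivial-partition term, which keeps $\sigma$ fixed and merely increases $\{b_i\}$, has a $K$-independent prefactor $P(\sigma,\{[m]\},\{b_i\},\{1\},p)$; the issue of closing off the $\{b_i\}$-chain is deferred. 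Your phrasing conflates the well-foundedness of the overall computational scheme with the inductive structure needed to prove this particular identity. This is a matter of organization rather than a fatal error, but it obscures which ingredients the proposition actually needs.
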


The notation used in this proposition will be introduced shortly. The main idea of this proposition is that we are expressing $a(\sigma,e_{\base},f_{\base},c,\{b_i\})$ in terms of $$a(\sigma_l, e(K)h(\beta,n_{O,l}), f(K)\frac{n_{O,l}}{h(\beta,n_{O,l})}, c_l, \{b_i+\bb1_{I}(i)\}_{i \in E_{l}}).$$ After introducing the appropriate notations, it will be apparent that this seemingly complicated term satisfies the four conditions: \begin{enumerate}
    \item $\sigma_l \subset \sigma$,
    \item $h(\beta,n_{O,l}) \geq 1$,
    \item $\frac{n_{O,l}}{h(\beta,n_{O,l})} \geq 1$,
    \item and $\min_i \{b_i+\bb1_{I}(i)\}_{i \in E_{l}} > \min_i \{b_i\}$.
\end{enumerate} By induction on degree, we have theoretically already calculated the term $$a(\sigma_l, e(K)h(\beta,n_{O,l}), f(K)\frac{n_{O,l}}{h(\beta,n_{O,l})}, c_l, \{b_i+\bb1_{I}(i)\}_{i \in E_{l}})$$ whenever one of the first three conditions is satisfied strictly. Thus, this proposition effectively reduces the calculation of $a(\sigma,e_{\base},f_{\base},c,\{b_i\})$ to $a(\sigma,e_{\base},f_{\base},c,\{b_i+\bb1_I(i)\})$. 

Now we introduce the relevant terms in the proposition. Fix a tamely ramified splitting type $(e_1^{f_1}, \dots, e_m^{f_m})$. Let $d_i = e_{i,\rel}f_{i,\rel}$, and so $d=\sum_i d_i$. Also, let $\beta=\beta(\{e_i\},e_{\base},\{b_i\})=\min \frac{b_i}{e_{i,\rel}}$. In other words, for any choices of $i \in [m]$ and $j_i \in [\gcd(p^{f_i}-1,e_{i,\rel})]$, $\beta$ is the smallest possible valuation of any element in $\fkm_{L_{j_i}}^{b_i}$. We will often omit the dependence of $\beta$ on $\{e_i\},e_{\base},$ and $\{b_i\}$, when these inputs are clear from context. Let $I=I(\{e_i\},e_{\base},\{b_i\})=\{i \in [m]: \frac{b_i}{e_{i,\rel}}=\beta$. Let $\denom(\beta)$ denote the denominator of $\beta$ in its lowest form. Note that if $\frac{a}{b}=\beta$, then $\denom(\beta)=\frac{b}{\gcd(a,b)}$. Let $T_{f_i,p}$ denote the Teichm\"uller representatives (aka $p^{f_i}-1$ roots of unity and $0$). 

We make a few more definitions which are more ad hoc, and so the reasons that we have defined these terms as such may be clearer if one reads the proof of Proposition \ref{prop:induction_a_with_P}, and refer to these definitions as needed. Fix a splitting type $\sigma = (e_1^{f_1}, \dots, e_m^{f_m})$, a partition $\{E_l\}_{l=1}^j$ of $[m]$, an integer $n_{O,l}$ associated to each element $E_l$ of the partition, and an integer $b_i$ for each $i \in [m]$. Let $$r(c)=r(c,\{E_l\}, \{n_{O,l}\})=c-\frac{\beta}{e_{\base}}(d(d-1)-\sum_{l} \sum_{i \in E_l}d_i(\sum_{i \in E_l}(d_i/n_{O,l})-1)),$$ and $$h(\beta,n_{O,l}) = \begin{cases} \denom(\beta) & n_{O,l} \neq 1 \\
1 & n_{O,l}=1
\end{cases}.$$ Now consider the set $$ \prod_{i=1}^m [\gcd(p^{f_i}-1,e_{i,\rel}] \times \prod_{i \in I} T_{f_i,p}.$$ We consider three conditions on elements $(\{j_i\}_{i=1}^m, \{a_{i,b_i}\}_{i \in I})$ in this set: 
\begin{enumerate}
    \item For all $i_1,i_2 \in I$, $i_1,i_2$ belong to the same $E_l$ if and only if $a_{i_1,b_{i_1}}\pi_{L_{j_{i_1}}}^{b_{i_1}}$ and $a_{i_2,b_{i_2}}\pi_{L_{j_{i_2}}}^{b_{i_2}}$ are Galois conjugates.
    \item If $i \in I$, then $n_O(a_{i,b_{i}}\pi_{L_{j_{i}}}^{b_{i}})=n_{O,l}$.
    \item If $I \neq [m]$, then there is some $E_{l_0}$ containing all of $[m] \setminus I$. Moreover, for any $i \in E_{l_0} \cap I$, $a_{i,b_i}=0$.
\end{enumerate}
Let $P(\sigma, \{E_l\},\{b_i\},\{n_{O,l}\},p)$ be $\frac{1}{\prod_{i \in [m]}\gcd(p^{f_i}-1,e_{i,\rel})}$ times the number of elements in $\prod_{i=1}^m [\gcd(p^{f_i}-1,e_{i,\rel}] \times \prod_{i \in I} T_{f_i,p}$ satisfying these conditions. Note that if there is such an $l_0$ as in the last condition, then unless $n_{O,l_0}=1$, the second and third condition will contradict each other, and $P(\sigma, \{E_l\},\{b_i\},\{n_{O,l}\},p)$ in this case would just be $0$.

To give a bit more context to these notation, we will relate some of these notations to the overview of proof given at the beginning. 
\begin{center}
\begin{tabular}{||c | c||} 
 \hline
 Notation & Corresponding idea in overview \\ [0.5ex] 
 \hline\hline
 $\beta$ & lowest degree \\
 \hline
 $n_{O,l}$ & number of Galois conjugates of the lowest degree term \\
 \hline
 $\{b_i+\bb1_I(i)\}_{i \in E_l}$ & going from $i$ to $i+1$ \\ 
 \hline
 $P(\sigma, \{E_l\}, \{b_i\}, \{n_{O,l}\},p)$ & \makecell{number of choices of Teichm\"uller representatives $a_i$ \\ giving a certain partition of Galois conjugates} \\ 
 \hline
 $r(c)$ & $c'$ \\
 \hline
\end{tabular}
\end{center}


\begin{proof}[Proof of Proposition \ref{prop:induction_a_with_P}] 
First fix choices $\{j_i\}_{i \in [m]}$, and consider elements of $L_{j_i}$ in power series expansion using Teichm\"uller representatives and the uniformizer $\pi_{L_{j_i}}=(\zeta_{p^{f_{ext}}-1}^{j_i}\pi_K)^{1/e_{i,\rel}}$. Then, the elements in $\fkm_{L_{j_i}}^{b_i}$ are of the form $\alpha_i=\sum_{n=b_i}^{\infty} a_{i,n} \pi_{L_{j_i}}^n$. Our proposition will be based on a rewriting of the following: $$\mu(\{\{\alpha_i\}_{i \in [m]} \in \prod_{i \in [m]} \fkm_{L_{j_i}}^{b_i}: v_p(\Delta_{P_{(\alpha_i)/K}})=c\})$$$$=\sum_{\substack{i \in I \\ a_{i,b_i} \in T_{f_i,p}}}\mu(\{\{\alpha_i\}_{i \in [m]} \in \prod_{i \in [m]} \fkm_{L_{j_i}}^{b_i}: \text{ for all }i \in I, \alpha_i \in a_{i,b_i}\pi_{L_{j_i}}^{b_i}+\alpha_{i'}, \alpha_i' \in \fkm_{L_{j_i}}^{i+1}, v_p(\Delta_{P_{(\alpha_i)}})=c\}.$$ Now, let $(\alpha_1, \dots, \alpha_m) \in \prod_{i \in [m]} L_{j_i}$, then the discriminant of its minimal polynomial $P_{(\alpha_1, \dots, \alpha_m)}$ is equal to $$\prod_{(i,k) \neq (i',k'), k,k' \in [d_i]} (\alpha_{i,k}-\alpha_{i',k'}),$$ where $\alpha_{i,k}$ are Galois conjugates of $\alpha_i$ over $K$. Our first step is to calculate the $p$-adic valuation of minimal polynomials of elements in $$\{\{\alpha_i\}_{i \in [m]} \in \prod_{i \in [m]} \fkm_{L_{j_i}}^{b_i}: \text{ for all }i \in I, \alpha_i \in a_{i,b_i}\pi_{L_{j_i}}^{b_i}+\alpha_{i'}, \alpha_i' \in \fkm_{L_{j_i}}^{i+1}\}$$ in a clever way as in Figure \ref{fig:recursion}. The next few paragraphs will explain in rigorous detail the idea reflected in the figure.

Define $$C=C(\{L_i\},K,\{b_i\},\{a_{i,b_i}\}_{i \in I})$$ to be the set of Galois conjugates of $a_{i,b_i} \pi_{L_{j_i}}^{b_i}$, as $i$ varies over $I$. If there is an $i$ so that $b_i/e_{i,\rel} \neq \beta$, we also add $0$ to $C$. Define $\bar \gamma$ to be the set of Galois conjugates of $\gamma$, and similarly define $\bar C$ to be $C$ up to Galois conjugates. This way, for every $i,k$, with $k \in [d_i]$, $$\alpha_{i,k}=\gamma_{i,k} + H.O.T.,$$ for some $\gamma_{i,k} \in C$. Now we partition the set $\{(i,k):k \in [d_i]\}$ with the equivalence relation $(i,k) \sim (i',k')$ if $\bar \gamma_{i,k} = \bar \gamma_{i',k'}$. Let $S_{\gamma}$ denote these partitions. Then, \begin{equation} \label{eqn:disc_recursion}
    \prod_{(i,k) \neq (i',k'), k,k' \in [d_i]} (\alpha_{i,k}-\alpha_{i',k'})=\prod_{(i,k) \not \sim (i',k'), k,k' \in [d_i]}(\alpha_{i,k}-\alpha_{i',k'})\prod_{\gamma \in C}\prod_{\substack{(i,k),(i',k') \in S_{\gamma} \\ (i,k) \neq (i',k')}}(\alpha_{i,k}-\alpha_{i',k'}).
\end{equation} 

In the case where $(i,k) \not \sim (i',k')$, $\gamma_{i,k} \neq \gamma_{i',k'}$, and so if $v_K(\gamma_{i,k}-\gamma_{i',k
})=\beta$, then since $\alpha_{i,k}=\gamma_{i,k} + H.O.T.,$ we would also have that $v_K(\alpha_{i,k}-\alpha_{i',k'})=\beta$. Now, our $\gamma$'s are either $0$ or of the form $$a_{i,b_i} \pi_{L_{j_i}}^{b_i}=\zeta_{p^{f_i}-1}^{l_i}(\zeta_{p^{f_i}-1}^{j_i}\pi_K)^{b_i/e_{i,\rel}}.$$ Thus, $\gamma_{i,k}-\gamma_{i',k'}$ will be of the form $$\zeta_{e(p^f-1)}^l \pi_K^{\beta}-\zeta_{e'(p^{f'}-1)}^{l'} \pi_K^{\beta}=\zeta_{e(p^f-1)}^l(1-\frac{\zeta_{e'(p^{f'}-1)}^{l'}}{\zeta_{e(p^f-1)}^l})\pi_K^{\beta}$$ and since $p \nmid e$ and $p \nmid e'$, we see that the order of the root of unity $\frac{\zeta_{e'(p^{f'}-1)}^{l'}}{\zeta_{e(p^f-1)}^l}$ is not divisible by $p$, and therefore $$v_p(\zeta_{e(p^f-1)}^l(1-\frac{\zeta_{e'(p^{f'}-1)}^{l'}}{\zeta_{e(p^f-1)}^l})\pi_K^{\beta})=v_K(\pi_K^{\beta}),$$ as desired. Now, the number of choices of pairs $(i,k),(i',k')$ so that $(i,k) \not \sim (i',k')$ is equal to $$d(d-1)-\sum_{\gamma} |S_{\gamma}|(|S_{\gamma}|-1).$$ Thus, \begin{equation}\label{eqn:temp131313}
    v_K(\prod_{(i,k) \not \sim (i',k'), k,k' \in [d_i]}(\alpha_{i,k}-\alpha_{i',k'}))=\frac{1}{e_{\base}}(d(d-1)-\sum_{\gamma} |S_{\gamma}|(|S_{\gamma}|-1))\beta
\end{equation} 

Now we compute the term $$\prod_{\gamma \in C}\prod_{\substack{(i,k),(i',k') \in S_{\gamma} \\ (i,k) \neq (i',k')}}(\alpha_{i,k}-\alpha_{i',k'}).$$ We may express the inner product $$\prod_{\substack{(i,k),(i',k') \in S_{\gamma} \\ (i,k) \neq (i',k')}}(\alpha_{i,k}-\alpha_{i',k'})$$ as discriminant of some the minimal polynomial over $K[\gamma]$ of some element in the \'etale extension of $K[\gamma]$. One can see that since $S_{\gamma}$ consists of all pairs $(i,k)$ such that $\gamma_{i,k}=\gamma$, this element may be constructed as the tuple consisting of $\alpha_{i,k_i}$, as $i$ ranges between elements of $[m]$ such that $\gamma$ is a Galois conjugate of $\gamma_i$, and $k_i$ is a choice of any Galois conjugate of $\alpha_{i,k_i}$ so that $\gamma_{i,k_i}=\gamma$. 

Now let $\bar C = C/\sim$, where $\gamma \sim \gamma'$ if they are Galois conjugates. Then, $\bar C$ gives a partition $\{E_{\bar \gamma}\}$ of $[m]$ via the equivalence $i \sim i'$ if $\bar \gamma_i = \bar \gamma_{i'}$. Thus, $E_{\bar \gamma}$ would consist of all indices $i$ so that $\bar \gamma_i = \bar \gamma$. From now on, we fix representatives $\gamma$ of $\bar \gamma$. With these representatives, for any element $i \in E_{\bar \gamma}$, we may define $\psi_{\gamma_i} \in Gal(K)$ be any element such that $\psi_{\gamma_i}(\gamma_i)=\gamma$. Now we can express what we shown in the previous paragraph as the fact that $$\prod_{\substack{(i,k),(i',k') \in S_{\gamma} \\ (i,k) \neq (i',k')}}(\alpha_{i,k}-\alpha_{i',k'})$$ is equal to the discriminant of the minimal polynomial of $(\psi_{\gamma_i}(\alpha_i)-\gamma)_{i\in E_{\bar \gamma}}$ over $K[\gamma]$. Thus, we have \begin{align*}
    v_p(\prod_{\gamma \in C}\prod_{\substack{(i,k),(i',k') \in S_{\gamma} \\ (i,k) \neq (i',k')}}(\alpha_{i,k}-\alpha_{i',k'}))
    &=v_p(\prod_{\gamma \in C}\Delta_{P_{(\psi_{\gamma_i}(\alpha_i)-\gamma)}/K[\gamma]})\\
    &=v_p(\prod_{\bar \gamma \in \bar C} \prod_{\gamma \in \bar \gamma}\Delta_{P_{(\psi_{\gamma_i}(\alpha_i)-\gamma)}/K[\gamma]})\\
    &=\sum_{\bar \gamma \in \bar C} \# \bar \gamma v_p(\Delta_{P_{(\psi_{\gamma_i}(\alpha_i)-\gamma)}/K[\gamma]}),
\end{align*} where in RHS of the last equality, $\gamma$ is any chosen representative of $\bar \gamma$.



Now it is clear from the definition that $|S_{\gamma}|=\sum_{i \in E_{\bar \gamma}}d_i/\# \bar \gamma $. Thus, $$\frac{\beta}{e_{\base}}(d(d-1)-\sum_{\bar \gamma} |S_{\gamma}|(|S_{\gamma}|-1)) = \frac{\beta}{e_{\base}}(d(d-1)-\sum_{\bar \gamma \in \bar C} \sum_{i \in E_{\bar \gamma}}d_i(\sum_{i \in E_{\bar \gamma}}(d_i/\# \bar \gamma )-1)).$$ Thus, combining Equation \ref{eqn:temp131313} and the above paragraph, we get that \begin{equation}
    v_p(\Delta_{P_{(\alpha_i)/K}})=\frac{\beta}{e_{\base}}(d(d-1)-\sum_{\bar \gamma \in \bar C} \sum_{i \in E_{\bar \gamma}}d_i(\sum_{i \in E_{\bar \gamma}}(d_i/\# \bar \gamma )-1))+\sum_{\bar \gamma \in \bar C} \# \bar \gamma v_p(\Delta_{P_{(\psi_{\gamma_i}(\alpha_i)-\gamma)}/K[\gamma]}). \label{eqn:temp132}
\end{equation} This is the extent of the idea reflected in Figure \ref{fig:recursion}, and this concludes our first step. In summary, what we have done in Equation \ref{eqn:temp132} is express the discriminant of the minimal polynomial of an element $(\alpha_i)$ in terms of \begin{enumerate}
    \item $e_{\base}$, $d_i$, $d$
    \item $\bar C(\{L_i\},K,\{b_i\},\{a_{i,b_i}\})$, and $E_{\bar \gamma}$, $\# \bar \gamma$, $\psi_{\gamma_i}$,
\end{enumerate} where the first item consist of constants that we defined prior to the proof, and the second item consist of objects depending only on the choices of $a_{i,b_i}$ and $L_i$.

As we noted at the beginning of this proof, we will rewrite the formula $$a(\{L_{j_i}\},K,c,\{b_i\})=\mu(\{\{\alpha_i\}_{i \in [m]} \in \prod_{i \in [m]} \fkm_{L_{j_i}}^{b_i}: v_p(\Delta_{P_{(\alpha_i)/K}})=c\})$$$$=\sum_{\substack{i \in I \\ a_{i,b_i} \in T_{f_i,p}}}\mu(\{\{\alpha_i\}_{i \in [m]} \in \prod_{i \in [m]} \fkm_{L_{j_i}}^{b_i}: \text{ for all }i \in I, \alpha_i \in a_{i,b_i}\pi_{L_{j_i}}^{b_i}+\alpha_{i'}, \alpha_i' \in \fkm_{L_{j_i}}^{i+1}, v_p(\Delta_{P_{(\alpha_i)}})=c\},$$ and with Equation \ref{eqn:temp132}, we will now do so. So, we will continue to fix a choice of $\{j_i\}_{i \in [m]}$. Now fix a choice of $(a_{i,b_i})_{i \in I}$. Then, we consider the associated term $$\mu(\{\{\alpha_i\}_{i \in [m]} \in \prod_{i \in [m]} \fkm_{L_{j_i}}^{b_i}: \text{ for all }i \in I, \alpha_i \in a_{i,b_i}\pi_{L_{j_i}}^{b_i}+\alpha_{i'}, \alpha_i' \in \fkm_{L_{j_i}}^{i+1}, v_p(\Delta_{P_{(\alpha_i)}})=c\}.$$ We apply $(\psi_{\gamma_i})_{i=1}^m$ and get 
$$\mu(\{\{\psi_{\gamma_i}(\alpha_i)\}_{i \in [m]} \in \prod_{i \in [m]} \fkm_{\psi_{\gamma_i}(L_{j_i})}^{b_i}: \text{ for all }i \in I, \psi_{\gamma_i}(\alpha_i) = \psi_{\gamma_i}(a_{i,b_i}\pi_{L_{j_i}}^{b_i})+\alpha_{i}', \alpha_i' \in \fkm_{\psi_{\gamma_i}(L_{j_i})}^{i+1}, v_p(\Delta_{P_{(\alpha_i)}})=c\}.$$ Using Equation \ref{eqn:temp132}, we obtain that the above is $$\mu(\{\{\alpha_i'\}_{i \in [m]} \in \prod_{i \in I} \fkm_{\psi_{\gamma_i}(L_{j_i})}^{b_i+1} \prod_{i \notin I} \fkm_{\psi_{\gamma_i}(L_{j_i})}^{b_i}: \sum_{\bar \gamma \in \bar C} \# \bar \gamma v_p(\Delta_{P_{\alpha_i'}/K[\gamma]})=r(c)\}$$$$=\sum_{\substack{c_{\bar \gamma} \in \bbZ_{\geq 0} \\ \sum_{\bar \gamma \in \bar C} c_{\bar \gamma} \# \bar \gamma =r(c)}}\prod_{\bar \gamma \in \bar C}\mu(\{\{\alpha_i'\}_{i \in [m]} \in \prod_{i \in E_{\bar \gamma}} \fkm_{\psi_{\gamma_i}(L_{j_i})}^{b_i+\bb1_{I}(i)}: v_p(\Delta_{P_{\alpha_i'}/K[\gamma]})=c_{\bar \gamma}\}$$$$=\sum_{\substack{c_{\bar \gamma} \in \bbZ_{\geq 0} \\ \sum_{\bar \gamma \in \bar C} c_{\bar \gamma} \# \bar \gamma =r(c)}}\prod_{\bar \gamma \in \bar C}a(\{\psi_{\gamma_i}(L_{j_i})\}_{i \in E_{\bar \gamma}},K[\gamma],c_{\bar \gamma},\{b_i+\bb1_{I}(i)\}_{i \in E_{\bar \gamma}}).$$ Thus, we have $$a(\{L_{j_i}\},K,c,\{b_i\})=\sum_{\substack{i \in I \\ a_{i,b_i} \in T_{f_i,p}}}\sum_{\substack{c_{\bar \gamma} \in \bbZ_{\geq 0} \\ \sum_{\bar \gamma \in \bar C} c_{\bar \gamma} \# \bar \gamma =r(c)}}\prod_{\bar \gamma \in \bar C}a(\{\psi_{\gamma_i}(L_{j_i})\}_{i \in E_{\bar \gamma}},K[\gamma],c_{\bar \gamma},\{b_i+\bb1_{I}(i)\}_{i \in E_{\bar \gamma}}).$$ 

With this, we may now calculate $a(\sigma,e_{\base},f_{\base},c,\{b_i\}_{i=1}^m)$. We have $$a(\sigma,e_{\base},f_{\base},c,\{b_i\}_{i=1}^m) = \frac{1}{\prod_{i \in [m]} \gcd(p^{f_i}-1,e_{i,\rel})}\sum_{j_i \in [\gcd(p^{f_i}-1,e_{i,\rel})]} a(\{L_{j_i}\},K,c,\{b_i\}_{i=1}^m)$$$$=\frac{1}{\prod_{i \in [m]} \gcd(p^{f_i}-1,e_{i,\rel})}\sum_{j_i \in [\gcd(p^{f_i}-1,e_{i,\rel})]} \sum_{\substack{i \in I \\ a_{i,b_i} \in T_{f_i,p}}} $$
\begin{equation} \label{eqn:temp14141}
    \sum_{\substack{c_{\bar \gamma} \in \bbZ_{\geq 0} \\ \sum_{\bar \gamma \in \bar C} c_{\bar \gamma} \# \bar \gamma =r(c)}}\prod_{\bar \gamma \in \bar C}a(\{\psi_{\gamma_i}(L_{j_i})\}_{i \in E_{\bar \gamma}},K[\gamma],c_{\bar \gamma},\{b_i+\bb1_{I}(i)\}_{i \in E_{\bar \gamma}}).
\end{equation} At this point, we have something that somewhat resembles the proposition we are proving. One annoying term in our sum is the $\psi_{\gamma_i}(L_{j_i})$, but now we will fix this. It will turn out to ``equidistribute" nicely over extensions of $K[\gamma]$, as $j_i$ and $a_{i,b_i}$ varies.

We now rearrange the sum. First fix any $\bar C_0$ and partition $\{E_{\bar \gamma}^0\}_{\bar \gamma \in \bar C_0}$ that may show up in the sum. Then, consider the set of pairs $$B=\{(\{j_i\}_{i \in [m]},\{a_{i,b_i}\}_{i \in I}):\bar C_0 = \bar C(\{L_i\},K,\{b_i\},\{a_{i,b_i}\}_{i \in I}),E_{\bar \gamma}^0=E_{\bar \gamma}(\{L_i\},K,\{b_i\},\{a_{i,b_i}\}_{i \in I})\}.$$ The reason to consider such sets $B$ is that for any $(\{j_i\},\{a_{i,b_i}\})$ in this set, Equation \ref{eqn:temp132} is expressed in the same exact way. In other words, $r(c)=r(c,\{E_{\bar \gamma}^0\}_{\bar \gamma \in \bar C_0},\{\# \bar \gamma \}_{\bar \gamma \in \bar C_0})$ is constant on this set. Thus, we may swap the two outer sums as follows $$\sum_{(\{j_i\}_{i \in [m]},\{a_{i,b_i}\}_{i \in I}) \in B} \sum_{\substack{c_{\bar \gamma} \in \bbZ_{\geq 0} \\ \sum_{\bar \gamma \in \bar C} c_{\bar \gamma} \# \bar \gamma =r(c)}}\prod_{\bar \gamma \in \bar C}a(\{\psi_{\gamma_i}(L_{j_i})\}_{i \in E_{\bar \gamma}},K[\gamma],c_{\bar \gamma},\{b_i+\bb1_{I}(i)\}_{i \in E_{\bar \gamma}})$$$$= \sum_{\substack{c_{\bar \gamma} \in \bbZ_{\geq 0} \\ \sum_{\bar \gamma \in \bar C} c_{\bar \gamma} \# \bar \gamma =r(c)}}\sum_{(\{j_i\}_{i \in [m]},\{a_{i,b_i}\}_{i \in I}) \in B}\prod_{\bar \gamma \in \bar C}a(\{\psi_{\gamma_i}(L_{j_i})\}_{i \in E_{\bar \gamma}},K[\gamma],c_{\bar \gamma},\{b_i+\bb1_{I}(i)\}_{i \in E_{\bar \gamma}}),$$ and note that the sum in Equation \ref{eqn:temp14141} is simply the sum of the above over all choices of $B$. Let $$B_{\{j_i^0\}_{i \in [m]}} = \{(\{j_i\}_{i \in [m]},\{a_{i,b_i}\}_{i \in I}): j_i = j_i^0\}.$$ Now we state a lemma that we will prove later. 

\begin{lemma}\label{lem:temp133}
There's a bijection between $B_{\{j_i^0\}_{i \in [m]}}$ and $B_{\{j_i^1\}_{i \in [m]}}$, for any choice of $j_i^0$ and $j_i^1$ such that for all $\bar \gamma \in \bar C_0$ and $i \in E_{\bar \gamma}^0$, $L_{j_i}$ contains some element of $\bar \gamma$. 
\end{lemma}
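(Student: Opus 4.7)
The plan is to show $|B_{\{j_i^0\}}| = |B_{\{j_i^1\}}|$ by factoring the count across the partition $\{E_{\bar\gamma}^0\}_{\bar\gamma \in \bar C_0}$ and then promoting the cardinality equality to a bijection.

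First, I would verify that the three conditions defining $B$ decouple across the parts of the partition: a tuple $\{a_{i,b_i}\}_{i \in I}$ lies in $B_{\{j_i\}}$ iff for every $\bar\gamma \in \bar C_0$ and every $i \in E_{\bar\gamma}^0 \cap I$, the element $a_{i,b_i}\pi_{L_{j_i}}^{b_i}$ belongs to $\bar\gamma$ (with the single forced value $a_{i,b_i}=0$ for $i$ in the distinguished part $E_{l_0}$ coming from condition 3, while condition 2 is automatic since the Galois-orbit size of an element is determined by the element). This yields the factorization
\begin{equation*}
|B_{\{j_i\}}| \;=\; \prod_{\bar\gamma \in \bar C_0} \prod_{i \in E_{\bar\gamma}^0 \cap I} N_i(\bar\gamma, j_i), \qquad N_i(\bar\gamma, j_i) := |\{a \in T_{f_i,p} : a\pi_{L_{j_i}}^{b_i} \in \bar\gamma\}|.
\end{equation*}

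Next, I would fix any representative $\gamma \in \bar\gamma$ and identify $N_i(\bar\gamma, j_i)$ with $|\Hom_K(K[\gamma], L_{j_i})|$. Elements of $\bar\gamma$ share the common valuation $\beta = b_i/e_{i,\rel}$, so those lying in $L_{j_i}$ are automatically of the form $a\pi_{L_{j_i}}^{b_i}$ for some Teichmüller $a$, identifying $N_i(\bar\gamma, j_i) = |\bar\gamma \cap L_{j_i}|$; and elements of $\bar\gamma$ in $L_{j_i}$ are in bijection with $K$-algebra embeddings $K[\gamma] \hookrightarrow L_{j_i}$ via $\gamma \mapsto \gamma'$.

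The crux is then to show $|\Hom_K(K[\gamma], L_{j_i})|$ is a constant independent of any compatible choice of $j_i$. By Theorem \ref{thm:tamely_ram_extns} and the explicit description $L_{j_i} = K[\zeta_{p^{f_i}-1}, (\zeta_{p^{f_i}-1}^{j_i}\pi_K)^{1/e_{i,\rel}}]$, every subfield of the tame extension $L_{j_i}$ is uniquely determined in $\bar K$ by its ramification and inertia indices over $K$. Hence if $K[\gamma]$ admits any $K$-embedding into $L_{j_i}$, it is isomorphic to the unique subfield of $L_{j_i}$ of matching profile, whence
\begin{equation*}
|\Hom_K(K[\gamma], L_{j_i})| \;=\; |\Aut_K(K[\gamma])|,
\end{equation*}
a quantity depending only on the fixed field $K[\gamma]$ and not on $j_i$. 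The compatibility hypothesis guarantees this nonzero case for both $j_i^0$ and $j_i^1$, so the factors agree, $|B_{\{j_i^0\}}| = |B_{\{j_i^1\}}|$, and one obtains an explicit bijection by choosing any bijection between the equinumerous Teichmüller sets at each index $i$.

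The main obstacle I expect is the uniqueness-of-subfields claim in the third paragraph: while it follows from the structure of tame extensions and the cyclic Galois group of their compositum with $K^{\mathrm{un}}$, care is required to disentangle absolute from relative ramification/inertia and to verify that $K[\gamma]$, itself a tame subextension determined by $\gamma = a_{i,b_i}\pi_{L_{j_i}}^{b_i}$, matches the profile of the candidate subfield of $L_{j_i}$. Once this uniqueness is in hand the rest of the argument is essentially bookkeeping.
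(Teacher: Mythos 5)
Your reduction to the factored count
\begin{equation*}
|B_{\{j_i\}}| \;=\; \prod_{\bar\gamma \in \bar C_0} \prod_{i \in E_{\bar\gamma}^0 \cap I} N_i(\bar\gamma, j_i), \qquad N_i(\bar\gamma, j_i) = |\bar\gamma \cap L_{j_i}| = |\Hom_K(K[\gamma], L_{j_i})|,
\end{equation*}
is sound, but the step you flag as delicate is in fact false. Subfields of a tame extension are \emph{not} uniquely determined by their ramification and inertia over $K$, and the consequence $|\Hom_K(K[\gamma], L_{j_i})| = |\Aut_K(K[\gamma])|$ fails even for the specific fields $K[\gamma]$ that arise here. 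Concretely, take $K=\bbQ_3$, $e_{i,\rel}=f_{i,\rel}=4$, and $L_0 = \bbQ_3[\zeta_{80}, 3^{1/4}]$ with $\gamma = \pi_{L_0} = 3^{1/4}$ (so $b_i = 1$, $\beta = 1/4$). Then $K[\gamma] = \bbQ_3[3^{1/4}]$ has $|\Aut_K(K[\gamma])| = 2$, since among the conjugates $\{\pm 3^{1/4}, \pm i\,3^{1/4}\}$ only $\pm 3^{1/4}$ lie in the totally ramified field $\bbQ_3[3^{1/4}]$. But $\zeta_4 \in \mu_{80} \subset L_0$, so \emph{all four} conjugates lie in $L_0$, giving $|\Hom_K(K[\gamma],L_0)| = 4 \neq 2$. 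The failure is exactly that $L_0$ contains two distinct $K$-conjugate copies of $K[\gamma]$, namely $\bbQ_3[3^{1/4}]$ and $\bbQ_3[i\,3^{1/4}]$.

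The conclusion you actually need --- that $|\bar\gamma \cap L_{j_i^0}| = |\bar\gamma \cap L_{j_i^1}|$ for the $j_i$'s allowed by the hypothesis --- is correct, but it does not follow from a uniqueness-of-subfields principle; it requires a genuine comparison across the different isomorphism classes $L_{j_i}$, which is precisely the content of the lemma. The paper proceeds differently: it reduces to the case of changing a single coordinate $j_0^0 \rightsquigarrow j_0^1$, translates the containment hypothesis into a divisibility constraint $\gcd\!\left(\tfrac{e_{i,\rel}}{\gcd(b_0,e_{i,\rel})},\,p^{f_0}-1\right)\mid j_0^1-j_0^0$, and then explicitly produces the bijection $(\{j_i^0\},\{a_{i,b_i}\}) \mapsto (\{j_i^1\},\{h_i a_{i,b_i}\})$ by solving a linear congruence for the Teichmüller twist $h_0$. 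Your structural factorization is a reasonable reorganization of the problem, but without that explicit step (or an equivalent replacement) the argument has a genuine gap at its crux.
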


This lemma is the ``equidistribution" mentioned earlier. A simple way to express this lemma is the fact that as we vary $\{\psi_{\gamma_i}(L_{j_i})\}_{i \in E_{\gamma}}$ up to isomorphism over $(\{j_i\}_{i \in [m]},\{a_{i,b_i}\}_{i \in I}) \in B$, any collections of isomorphism classes of extensions $\{L_i\}_{i \in E_{\gamma}}$ of $K[\gamma]$, such that $f(L_i)=f_i$, $e(L_i)=e_i$ shows up the same number of times. In terms of our proof, we have $$\sum_{(\{j_i\}_{i \in [m]},\{a_{i,b_i}\}_{i \in I}) \in B}\prod_{\bar \gamma \in \bar C}a(\{\psi_{\gamma_i}(L_{j_i})\}_{i \in E_{\bar \gamma}},K[\gamma],c_{\bar \gamma},\{b_i+\bb1_{I}(i)\}_{i \in E_{\bar \gamma}})$$$$=|B|\prod_{\bar \gamma \in \bar C}\sum_{j_i \in [\gcd(p^{f_i}-1,e(K[\gamma]))], i \in E_{\bar \gamma}}a(\{L_{j_i}\}_{i \in E_{\bar \gamma}},K[\gamma],c_{\bar \gamma},\{b_i+\bb1_{I}(i)\}_{i \in E_{\bar \gamma}}).$$ Now let $\sigma$ be partitioned by $\sigma_{\bar \gamma}$, where $\sigma_{\bar \gamma}=(e_i^{f_i})_{i \in E_{\bar \gamma}}$. Now by the induction hypothesis, in particular the dependence on choice of $K$ only via $e(K)$ and $f(K)$, the very inner sum is just $$a(\sigma_{\bar \gamma}, e(K[\gamma]), f(K[\gamma]), c_{\bar \gamma}, \{b_i+\bb1_{I}(i)\}_{i \in E_{\bar \gamma}}).$$ If $\gamma \neq 0$, then we get $$e(K[\gamma])=e(K[\zeta \pi_K^{\beta}])=e(K)\denom(\beta)$$ and $$f(K[\gamma])=f(K)n_O(\gamma)/\denom(\beta)$$ where $\zeta$ is some root of unity with order coprime to $p$. Thus, what we have obtained is that $$\sum_{(\{j_i\}_{i \in [m]},\{a_{i,b_i}\}_{i \in I}) \in B} \sum_{\substack{c_{\bar \gamma} \in \bbZ_{\geq 0}\\ \sum_{\bar \gamma \in \bar C} c_{\bar \gamma} \# \bar \gamma =r(c)}}\prod_{\bar \gamma \in \bar C}a(\{\psi_{\gamma_i}(L_{j_i})\}_{i \in E_{\bar \gamma}},K[\gamma],c_{\bar \gamma},\{b_i+\bb1_{I}(i)\}_{i \in E_{\bar \gamma}})$$$$=|B|\sum_{\substack{c_{\bar \gamma} \in \bbZ_{\geq 0}\\ \sum_{\bar \gamma \in \bar C} c_{\bar \gamma} \# \bar \gamma =r(c)}}\prod_{\bar \gamma \in \bar C}a(\sigma_{\bar \gamma}, e(K)h(\beta,n_O(\gamma)), f(K)\frac{n_O(\gamma)}{h(\beta,n_O(\gamma))}, c_{\bar \gamma}, \{b_i+\bb1_{I}(i)\}_{i \in E_{\bar \gamma}}),$$ where we recall $r(c)=r(c,\{E_{\bar \gamma}^0\}_{\bar \gamma \in \bar C_0},\{\# \bar \gamma \}_{\bar \gamma \in \bar C_0})$

Finally, we complete the rearrangement of the sum $$\frac{1}{\prod_{i \in [m]} \gcd(p^{f_i}-1,e_{i,\rel})}\sum_{j_i \in [\gcd(p^{f_i}-1,e_{i,\rel})]} \sum_{\substack{i \in I \\ a_{i,b_i} \in T_{f_i,p}}} $$$$\sum_{\substack{c_{\bar \gamma} \in \bbZ_{\geq 0} \\ \sum_{\bar \gamma \in \bar C} c_{\bar \gamma} \# \bar \gamma =r(c)}}\prod_{\bar \gamma \in \bar C}a(\{\psi_{\gamma_i}(L_{j_i})\}_{i \in E_{\bar \gamma}},K[\gamma],c_{\bar \gamma},\{b_i+\bb1_{I}(i)\}_{i \in E_{\bar \gamma}}).$$ In the above paragraph, we calculated a subsum after fixing $\bar C_0$ and $\{E^0_{\bar \gamma}\}_{\bar \gamma \in \bar C_0}$ and so we can sum over all possible choices of $\bar C_0$ and $\{E^0_{\bar \gamma}\}_{\bar \gamma \in \bar C_0}$. However, we will instead sum over something more coarse. Given a choice of $\bar C_0$ and $\{E^0_{\bar \gamma}\}_{\bar \gamma \in \bar C_0}$, we have in particular a partition $\{E^0_{\bar \gamma}\}_{\bar \gamma \in \bar C_0}$ of $[m]$, along with for each partition $E^0_{\bar \gamma}$, a number $\# \bar \gamma$. This is how we will rearrange our sum. Thus, we obtain $$\frac{1}{\prod_{i \in [m]} \gcd(p^{f_i}-1,e_{i,\rel})}\sum_{\substack{\text{partitions } \{E_l\}_{l=1}^k\text{ of }[m] \\ \{n_{O,l}\}_{l=1}^k \in \bbZ_{\geq 1}^k }} \# \{(\{j_i\},\{a_{i,b_i}\}_{i \in I}): \{E_l\}_l = \{E_{\bar \gamma}\}_{\bar \gamma \in \bar C}, \{n_{O,l}\}_l = \{\# \bar \gamma \}_{\bar \gamma \in \bar C}\} $$$$ \sum_{\substack{\{c_l\}_{l=1}^k \in \bbZ_{\geq 0}^k \\ \sum_{l} n_{O,l} c_{l}=r(c)}}\prod_{l}a(\sigma_l, e(K)h(\beta,n_{O,l}), f(K)\frac{n_{O,l}}{h(\beta,n_{O,l})}, c_{l}, \{b_i+\bb1_{I}(i)\}_{i \in E_{l}}).$$ We are done after noting that the definition of $P(\sigma,\{E_l\},\{b_i\},\{n_{O,l}\},p)$ is equivalent to $$\frac{1}{\prod_{i \in [m]} \gcd(p^{f_i}-1,e_{i,\rel})}\# \{(\{j_i\},\{a_{i,b_i}\}_{i \in I}): \{E_l\}_l = \{E_{\bar \gamma}\}_{\bar \gamma \in \bar C}, \{n_{O,l}\}_l = \{\# \bar \gamma \}_{\bar \gamma} \in \bar C\}.$$ 


\end{proof}

\begin{proof}[proof of Lemma \ref{lem:temp133}]
Without loss of generality, it suffices to show this when $j_i^0=j_i^1$ except in the first coordinate, where $j_0^0 \neq j_0^1$. Now it's clear that if $0 \notin I$, we already have this bijection, via just replacing $j_0^0$ by $j_0^1$ for each $(\{j_i\},\{a_{i,b_i}\}_{i \in I}) \in B_{\{j_i^0\}_{i=1}^m}$. So, assume that $0 \in I$. Now by assumption, $L_{j_0^0}$ and $L_{j_0^1}$ both contain some conjugate of $\gamma \in C$. This conjugate is of the form $a_{0,b_0} \pi_{L_{j_0^0}}^{b_0}$. Now, since the uniformizer of $L_{j_0^0}$ is of the form $$\pi_{L_{j_0^0}}=(\zeta_{p^{f_0}-1}^{j_0^0}\pi_K)^{1/e_{i,\rel}}=\zeta_{e_{i,\rel}(p^{f_0}-1)}^{j_0^0}\pi_K^{1/e_{i,\rel}},$$ we have that $$\pi_{L_{j_0^0+\frac{e_{i,\rel}}{\gcd(b_0,e_{i,\rel})}n}}^{b_0} = \zeta_{p^{f_0}-1}^{s} \pi_{L_{j_0^0}}^{b_0}$$ for some $s$ depending on $n$. It is also easy to see that $j_0^0 +\frac{e_{i,\rel}}{\gcd(b_0,e_{i,\rel})}n$ are the only such choices. In other words, the choices of $j_0$ so that $L_{j_0}$ contains some conjugate $\gamma$ are precisely fields in the isomorphism classes of $L_{j_0^0+\frac{e_{i,\rel}}{\gcd(b_0,e_{i,\rel})}n}$, which are of the form $L_{j_0^0+\frac{e_{i,\rel}}{\gcd(b_0,e_{i,\rel})}n_1+(p^{f_0}-1)n_2}$. Thus, we see that $\gcd(\frac{e_{i,\rel}}{\gcd(b_0,e_{i,\rel})},p^{f_0}-1)|j_0^1-j_0^0$.

Now let $\gamma^0$ and $\gamma^1$ be the Galois conjugates belonging to $L_{j_0^0}$ and $L_{j_0^1}$, respectively. We will construct a bijection from $B_{\{j_i^0\}_{i \in [m]}}$ to $B_{\{j_i^1\}_{i \in [m]}}$. This bijection will send the pair $(\{j_i^0\}, \{a_{i,b_i}\})$ to $(\{j_i^1\}, \{h_i a_{i,b_i}\})$, where $h_i=1$ except in the case where $i=0$. Thus, we only need to define $h_0$. We will need to find a $h_0$ so that $h_0 a_{0,b_0} \pi_{L_{j_0^1}}^{b_0}$ is a Galois conjugate of $a_{0,b_0} \pi_{L_{j_0^0}}^{b_0}$. Since the uniformizer of $L_{j_0^0}$ is of the form $$\pi_{L_{j_0^0}}=(\zeta_{p^{f_0}-1}^{j_0^0}\pi_K)^{1/e_{i,\rel}}=\zeta_{e_{i,\rel}(p^{f_0}-1)}^{j_0^0}\pi_K^{1/e_{i,\rel}},$$ and similarly for $L_{j_0^1}$, we need to find $h_0 \in T_{f_0,p}$ so that up to Galois conjugates, $$a_{0,b_0} \zeta_{e_{i,\rel}(p^{f_0}-1)}^{b_0j_0^0}\pi_K^{b_0/e_{i,\rel}}=h_0a_{0,b_0} \zeta_{e_{i,\rel}(p^{f_0}-1)}^{b_0j_0^1}\pi_K^{b_0/e_{i,\rel}}.$$ Thus, it suffices to show that up to powers of $\zeta_{e_{i,\rel}}^{b_0}$, $\zeta_{e_{i,\rel}(p^{f_0}-1)}^{b_0j_0^1-b_0j_0^0} \in T_{f_0,p}$. In other words, we ask for the existence of $h_0,h_1$ so that $$b_0(j_0^1-j_0^0) = b_0(p^{f_0}-1)h_1+e_{i,\rel}h_0 \mod e_{i,\rel}(p^{f_0}-1).$$ Now since $$\gcd(\frac{e_{i,\rel}}{\gcd(b_0,e_{i,\rel})},p^{f_0}-1)|j_0^1-j_0^0,$$ we see that $\gcd(e_{i,\rel},b_0(p^{f_0}-1))$ divides the LHS. Since $e_{i,\rel}$ and $b_0(p^{f_0}-1)$ are the coefficients of $h_0,h_1$ on the RHS, we obtain the existence of such $h_0$ and $h_1$. 

\end{proof}

As mentioned after Proposition \ref{prop:induction_a_with_P}, this proposition essentially reduces the calculation of $a(\sigma,e_{\base},f_{\base},c,\{b_i\})$ to $a(\sigma,e_{\base},f_{\base},c,\{b_i+\bb1_I(i)\})$. However, repeatedly applying Proposition \ref{prop:induction_a_with_P} will only keep increasing the elements $\{b_i\}$. Thus, we prove the following lemma will allow us to reduce the calculation of $a(\sigma,e_{\base},f_{\base},c,\{b_i+e_{i,\rel}\})$ to $a(\sigma,e_{\base},f_{\base},c,\{b_i\})$, which will be key in allowing us to close off the induction on degree, as we will soon see.

\begin{lemma}
\label{lem:recursion_reduce_i}
We have the relation $$a(\sigma,e_{\base},f_{\base},c,\{b_i+e_{i,\rel}\}) = p^{-f_{\base}d}a(\sigma,e_{\base},f_{\base},c-\frac{d(d-1)}{e_{\base}},\{b_i\}).$$
\end{lemma}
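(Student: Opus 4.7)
The plan is to prove the lemma at the level of each individual choice of $L = \prod_i L_{j_i}$ and then average, since the claimed identity is linear in the terms $a(\{L_{j_i}\},K,c,\{b_i\})$. So I fix $K$ with $e(K)=e_{\base}$, $f(K)=f_{\base}$, fix a tuple $(j_i)$, and work with
\[ S_c(\{b_i\}) \;=\; \mu\Bigl(\bigl\{\alpha \in \prod_{i} \fkm_{L_{j_i}}^{b_i} : v_p(\Delta_{P_{\alpha/K}}) = c\bigr\}\Bigr). \]
My goal is to establish $S_c(\{b_i+e_{i,\rel}\}) = p^{-f_{\base}d}\, S_{c-d(d-1)/e_{\base}}(\{b_i\})$; averaging over $\{j_i\}$ then yields the lemma.

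The key observation is that $\pi_K \in K \subset L_{j_i}$ has valuation $e_{i,\rel}$ with respect to the uniformizer $\pi_{L_{j_i}}$, so multiplication by $\pi_K$ carries $\fkm_{L_{j_i}}^{b_i}$ bijectively onto $\fkm_{L_{j_i}}^{b_i + e_{i,\rel}}$. The map
\[ \Phi : \prod_i \fkm_{L_{j_i}}^{b_i} \;\longrightarrow\; \prod_i \fkm_{L_{j_i}}^{b_i+e_{i,\rel}}, \qquad (\alpha_i) \longmapsto (\pi_K \alpha_i), \]
is therefore a measure-preserving-up-to-scalar bijection. Since the residue field of $L_{j_i}$ has size $p^{f_i}$, multiplication by $\pi_K$ on $L_{j_i}$ scales the normalized Haar measure by $p^{-f_i e_{i,\rel}}$, so $\Phi$ scales the product measure by $\prod_i p^{-f_i e_{i,\rel}} = p^{-\sum_i f_i e_{i,\rel}} = p^{-f_{\base}d}$, using $f_i e_{i,\rel} = f_{\base} f_{i,\rel} e_{i,\rel}$ and $d=\sum_i f_{i,\rel}e_{i,\rel}$.

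Next I track what $\Phi$ does to the discriminant. Since $\pi_K \in K$ is Galois-fixed, the set of Galois conjugates of $\pi_K \alpha$ over $K$ is exactly $\pi_K$ times the set of Galois conjugates of $\alpha$. Hence, writing the discriminant as the product of pairwise differences of the $d$ Galois conjugates (a product with $d(d-1)$ factors),
\[ \Delta_{P_{\pi_K\alpha/K}} \;=\; \pi_K^{d(d-1)} \,\Delta_{P_{\alpha/K}}. \]
Because $v_p(\pi_K) = 1/e_{\base}$ under our normalization $v_p(p)=1$, this gives
\[ v_p(\Delta_{P_{\pi_K\alpha/K}}) \;=\; v_p(\Delta_{P_{\alpha/K}}) + \tfrac{d(d-1)}{e_{\base}}. \]
So $\Phi$ sends the set $\{v_p(\Delta)=c-d(d-1)/e_{\base}\}$ onto $\{v_p(\Delta)=c\}$, and combining with the measure computation gives $S_c(\{b_i+e_{i,\rel}\}) = p^{-f_{\base}d}\, S_{c-d(d-1)/e_{\base}}(\{b_i\})$.

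Finally, since this identity holds for every individual tuple $(j_i)$ with constants independent of $(j_i)$, averaging both sides over $j_i \in [\gcd(p^{f_i}-1,e_{i,\rel})]$ (which is how $a(\sigma,e_{\base},f_{\base},c,\{b_i\})$ was defined) yields the claimed recursion. There is no real obstacle here; the only mild subtlety is making sure the exponents line up ($\sum_i f_i e_{i,\rel} = f_{\base}d$ and $v_p(\pi_K)=1/e_{\base}$), both of which are immediate from the definitions in the Notation section.
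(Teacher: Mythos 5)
Your proof is correct and takes essentially the same approach as the paper, which simply states the discriminant identity $\prod(\pi_K\alpha_i - \pi_K\alpha_j) = \pi_K^{d(d-1)}\prod(\alpha_i-\alpha_j)$ and says the rest follows from the definition of $a$. You have filled in the details the paper leaves implicit — the Haar-measure scaling $\prod_i p^{-f_i e_{i,\rel}} = p^{-f_{\base}d}$ under multiplication by $\pi_K$, the valuation shift $v_p(\pi_K^{d(d-1)}) = d(d-1)/e_{\base}$, and the averaging over $\{j_i\}$ — and each of these is correct.
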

\begin{proof}
This comes down to the fact that in the definition of discriminant, we have $$\prod_{\alpha_i \neq \alpha_j} (\pi_K\alpha_i-\pi_K\alpha_j)=\pi_K^{d(d-1)}\prod_{\alpha_i \neq \alpha_j} (\alpha_i-\alpha_j).$$ The rest follows by using definition of $a(\sigma,e_{\base},f_{\base},c,\{b_i\})$.
\end{proof}

\subsection{Generating function}

We can express the above recurrences nicely with a generating function. Let $$G(\sigma,e_{\base},f_{\base}, \{b_i\},p)(t) = \sum_{c=0}^\infty a(\sigma,e_{\base},f_{\base}, c, \{b_i\},p)t^c.$$ Similar to our notation for $a$, we will often omit the dependence of $G$ on $p$.

As an analogous example to Example \ref{ex:base_case_a}, we have
\begin{example} \label{ex:base_case_a_GF}
$G((e_{\base}^{f_{\base}}),e_{\base},f_{\base},\{b\})=p^{-bf}$.
\end{example} This will be our base case, and we will inductively calculate $G(\sigma,e_{\base},f_{\base}, \{b_i\})(t)$ using Proposition \ref{prop:induction_a_with_P} and Lemma \ref{lem:recursion_reduce_i}.

To simplify our equations, define $$\inc^1(\{b_i\})=\inc(\sigma,e_{\base},f_{\base},\{b_i\}) = \{b_i+\bb1_{I}(i)\},$$ where we recall, $I$ consists of $i$ so that $b_i/e_{i,\rel}$ is minimum among such terms. Let $\inc^n(\{b_i\})=\inc(\inc^{n-1}(\{b_i\}))$. Define \begin{align*}
    H(\sigma,e_{\base},f_{\base}, \{b_i\}) &= \left(\sum_{\substack{\text{partitions } \{E_l\}_{l=1}^k \text{ of }[m] \\ \{n_{O,l}\}_{l=1}^k \in \bbZ_{\geq 1}^k \\
(\{E_l\},\{n_{O,l}\}) \neq (\{[m]\},\{1\})}} P(\sigma,\{E_l\},\{b_i\},\{n_{O,l}\},p)\right)\\
&\times t^{\frac{1}{e_{\base}}(d(d-1)-\sum_l \sum_{i \in E_l}d_i(\sum_{i \in E_l}(d_i/n_{O,l})-1))\beta} \\
&\times \prod_{l=1}^k G(\sigma_l, e(K)h(\beta,n_{O,l}), f(K)\frac{n_{O,l}}{h(\beta,n_{O,l})}, \{b_i+\bb1_{I}(i)\}_{i \in E_{l}})(t^{n_{O,l}})
\end{align*} From now on, we will often abbreviate $G(\sigma,e_{\base},f_{\base}, \{b_i\})(t)$ with $G(\{b_i\})(t)$, and similarly for $H(\{b_i\})(t)$. With these notations, we get the following as a reformulation of Proposition \ref{prop:induction_a_with_P}.
\begin{cor} \label{cor:GF_induction}
$$G(\{b_i\})(t) = P(\sigma, \{[m]\},\{b_i\},\{1\},p)G(\inc(\{b_i\}))(t)+H(\{b_i\})(t)$$
\end{cor}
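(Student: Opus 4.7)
The plan is to obtain the generating function identity directly by multiplying the recurrence in Proposition \ref{prop:induction_a_with_P} by $t^c$ and summing over $c \ge 0$. Write out
\[
G(\{b_i\})(t) = \sum_{c \ge 0} a(\sigma,e_{\base},f_{\base},c,\{b_i\})\, t^c
\]
and substitute the RHS of Proposition \ref{prop:induction_a_with_P} for $a(\sigma,e_{\base},f_{\base},c,\{b_i\})$. After swapping the finite outer sum over partitions $(\{E_l\},\{n_{O,l}\})$ with the sum over $c$, the remaining task is to show that for a fixed choice of $(\{E_l\},\{n_{O,l}\})$, the contribution is exactly the corresponding summand of $H$ (or, for the trivial partition, of the first term on the right).

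The key bookkeeping is the change of summation variables from $c$ to $(c_l)$. The inner sum has $\sum_l n_{O,l} c_l = r(c) = c - \frac{\beta}{e_{\base}}\bigl(d(d-1)-\sum_l \sum_{i \in E_l} d_i(\sum_{i \in E_l}(d_i/n_{O,l})-1)\bigr)$, so $c$ is determined by $(c_l)$, and we may reparametrize the double sum $\sum_c \sum_{(c_l):\,\sum n_{O,l} c_l = r(c)}$ as a free sum over all $(c_l) \in \bbZ_{\ge 0}^k$. Factoring
\[
t^c \;=\; t^{\frac{\beta}{e_{\base}}(d(d-1)-\sum_l \sum_{i \in E_l} d_i(\sum_{i \in E_l}(d_i/n_{O,l})-1))} \prod_{l} \bigl(t^{n_{O,l}}\bigr)^{c_l},
\]
the free sum over $(c_l)$ splits as a product, and each factor becomes exactly $G(\sigma_l, e(K)h(\beta,n_{O,l}), f(K) n_{O,l}/h(\beta,n_{O,l}), \{b_i+\bb1_I(i)\}_{i \in E_l})(t^{n_{O,l}})$. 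This is precisely the general summand in the definition of $H$.

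It remains to isolate the trivial partition $(\{E_l\},\{n_{O,l}\}) = (\{[m]\},\{1\})$, which is excluded from $H$. For this choice $k = 1$, $E_1 = [m]$, $n_{O,1} = 1$, and $\sigma_1 = \sigma$, so $\sum_{i \in E_1} d_i(\sum_{i \in E_1}(d_i/1) - 1) = d(d-1)$, making the $t$-power exponent zero. Also $h(\beta,1)=1$ by definition, so the single $G$-factor is $G(\sigma, e(K), f(K), \inc(\{b_i\}))(t) = G(\inc(\{b_i\}))(t)$. This isolated contribution is thus $P(\sigma,\{[m]\},\{b_i\},\{1\},p)\, G(\inc(\{b_i\}))(t)$, matching the first term of the claim, while the remaining partitions assemble into $H(\{b_i\})(t)$.

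There is no serious obstacle beyond careful bookkeeping: the main thing to verify is that the exponent of $t$ coming from $r(c)$ indeed vanishes for the trivial partition (so that $G(\inc(\{b_i\}))(t)$ appears without an extra prefactor), and that the reparametrization $c \mapsto (c_l)$ is a bijection onto $\bbZ_{\ge 0}^k$ with no convergence issues, which holds because the shift $c - \sum_l n_{O,l} c_l$ is constant in the sum and both series are formal power series in $t$.
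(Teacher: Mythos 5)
Your argument is correct and is essentially the (unstated) argument the paper intends: multiply the identity of Proposition \ref{prop:induction_a_with_P} by $t^c$, sum over $c\ge 0$, reparametrize the inner $(c,\{c_l\})$ double sum by $\{c_l\}\in\bbZ_{\ge 0}^k$ using the fact that $c-\sum_l n_{O,l}c_l$ is the constant $\delta=\frac{\beta}{e_{\base}}\bigl(d(d-1)-\sum_l\sum_{i\in E_l}d_i(\sum_{i\in E_l}(d_i/n_{O,l})-1)\bigr)$, factor the resulting free product-sum, and peel off the trivial partition for which $\delta=0$ and $h(\beta,1)=1$. The only detail worth making explicit (you allude to it) is that the reparametrization is legitimate because $\delta\ge 0$ by convexity of $x\mapsto x(x-1)$, and that whenever the factor $P(\sigma,\{E_l\},\{b_i\},\{n_{O,l}\},p)$ is nonzero the shift $\delta$ is automatically a nonnegative integer (otherwise that partition contributes $0$ on both sides), so there is no issue with fractional powers of $t$.
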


Also, as a reformulation of Lemma \ref{lem:recursion_reduce_i}, we get that 
\begin{cor}\label{cor:GF_reduce_i}
$$G(\{b_i+e_{i,\rel}\})(t) = p^{-f_{\base}d}t^{\frac{d(d-1)}{e_{\base}}}G(\{b_i\})(t)$$
\end{cor}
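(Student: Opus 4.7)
The plan is to derive this identity by directly unpacking the definition of $G$, applying the pointwise identity from Lemma \ref{lem:recursion_reduce_i} to each coefficient, and then performing a shift of summation index. Since the corollary is nothing more than the generating function repackaging of the same lemma, no new ideas are needed beyond a formal manipulation.

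Concretely, I would start from
\[
G(\{b_i+e_{i,\rel}\})(t) \;=\; \sum_{c \geq 0} a\bigl(\sigma,e_{\base},f_{\base},c,\{b_i+e_{i,\rel}\}\bigr)\, t^c,
\]
and substitute the identity of Lemma \ref{lem:recursion_reduce_i} into each summand to obtain
\[
G(\{b_i+e_{i,\rel}\})(t) \;=\; p^{-f_{\base}d} \sum_{c \geq 0} a\!\left(\sigma,e_{\base},f_{\base}, c-\tfrac{d(d-1)}{e_{\base}},\{b_i\}\right)\, t^c.
\]
Then I would perform the change of variable $c' = c - d(d-1)/e_{\base}$. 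The terms with $c' < 0$ contribute nothing, since $a(\sigma,e_{\base},f_{\base},c',\{b_i\})$ is a measure of a set which is empty whenever $c' < 0$ (no element of $\cO_L$ has discriminant of negative valuation, which is precisely why $G$ was defined as a sum starting at $c = 0$ in the first place). So the sum reindexes cleanly to
\[
p^{-f_{\base}d}\, t^{d(d-1)/e_{\base}} \sum_{c' \geq 0} a(\sigma,e_{\base},f_{\base},c',\{b_i\})\, t^{c'} \;=\; p^{-f_{\base}d}\, t^{d(d-1)/e_{\base}}\, G(\{b_i\})(t),
\]
which is the claimed identity.

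The only delicate point is confirming that the nonpositive-$c'$ terms really do vanish, and this is immediate from the definition of $a$ as a measure together with the positivity of discriminant valuations; there is no substantive obstacle. I would expect the proof to occupy only a couple of lines in the paper.
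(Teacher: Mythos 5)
Your proof is correct and takes exactly the route the paper intends: the paper itself gives no separate proof of Corollary \ref{cor:GF_reduce_i}, merely describing it as a ``reformulation'' of Lemma \ref{lem:recursion_reduce_i}, and your coefficient-by-coefficient substitution followed by the index shift $c' = c - d(d-1)/e_{\base}$ is the obvious unpacking. You correctly identify that the only point needing justification is that the terms with $c' < 0$ vanish, which follows because $a(\sigma,e_{\base},f_{\base},c',\{b_i\})$ is the Haar measure of a set that is empty when $c'<0$ (no integral element has a discriminant of negative valuation), matching the paper's convention of starting the generating function sum at $c=0$.
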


Next, we will use these two reformulations to inductively calculate $(\sigma,e_{\base},f_{\base}, \{b_i\})$. Now fix $\{b_i\}$ and let $k_{lim}=\max_i\lceil \frac{b_i}{e_{i,\rel}}\rceil$. It's not hard to see that $\inc^n(\{b_i\})$ will eventually give us the constant sequence $\{k_{lim}e_{i,\rel}\}$. Let $n_{\{b_i\}}$ be the smallest $n$ so that $\inc^n(\{b_i\}) = \{k_{lim}e_{i,\rel}\}$. Also, let $n_{rec}$ be the $n$ so that $\inc^n(\{0\}_{i=1}^m)=\{e_{i,\rel}\}$. Then, we have the following formula.
\begin{prop}
\label{prop:generating_function_recursion}
$$G(\{b_i\})(t)=\sum_{n=0}^{n_{\{b_i\}}-1}H(\inc^n(\{b_i\}))(t)+\left(p^{-f_{\base}d}t^{\frac{d(d-1)}{e_{\base}}}\right)^{k_{lim}}G(\{0\}_{i=1}^m)(t),$$ where $$G(\{0\}_{i=1}^m)(t)=\frac{1}{1-p^{f_{\base}(1-d)}t^{\frac{d(d-1)}{e_{\base}}}}\left(H(\{0\}_{i=1}^m)(t)+p^{f_{\base}}\sum_{n=1}^{n_{rec}-1} H(\inc^n(\{0\}_{i=1}^m))(t)\right).$$
\end{prop}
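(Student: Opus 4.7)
The plan is to prove both formulas in Proposition~\ref{prop:generating_function_recursion} by iterating Corollary~\ref{cor:GF_induction} and Corollary~\ref{cor:GF_reduce_i}. The only nontrivial input is identifying the coefficient $P(\sigma, \{[m]\}, \{b_i\}, \{1\}, p)$ appearing in Corollary~\ref{cor:GF_induction}; once this is pinned down, both formulas follow by purely formal manipulations.

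The first step is the computation of $P_0(\{b_i\}) := P(\sigma, \{[m]\}, \{b_i\}, \{1\}, p)$, which I claim equals $p^{f_{\base}}$ when $\{b_i\} = \{k e_{i,\rel}\}_i$ for some integer $k \geq 0$, and equals $1$ otherwise. Unpacking the three defining conditions of $P$: condition (2) with $n_{O,l}=1$ requires each $a_{i,b_i}\pi_{L_{j_i}}^{b_i}$ to be Galois-fixed, i.e., to lie in $K$. When $\{b_i\} = \{k e_{i,\rel}\}_i$, we have $I = [m]$ and $\beta = k \in \bbZ$; using $\pi_{L_{j_i}}^{k e_{i,\rel}} = \zeta_{p^{f_i}-1}^{j_i k}\pi_K^k$, the ``in $K$'' constraint together with condition (1) forces $a_{i,k e_{i,\rel}} = h\,\zeta_{p^{f_i}-1}^{-j_i k}$ for a common $h \in T_{f_{\base},p} \cup \{0\}$, yielding $p^{f_{\base}}$ valid tuples for each $(j_1,\dots,j_m)$. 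When $\{b_i\}$ is not of this form, either $I \neq [m]$ and condition (3) with $E_{l_0} = [m]$ forces $a_{i,b_i}=0$ for every $i \in I$, or $I = [m]$ with $\beta \notin \bbZ$, in which case $a_{i,b_i}\pi_{L_{j_i}}^{b_i}$ has non-integer $v_K$-valuation and cannot lie in $K$ unless $a_{i,b_i}=0$. In either subcase the unique valid tuple is the all-zero one, giving $P_0 = 1$.

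The second step is a monotonicity observation: for $0 \leq n < n_{\{b_i\}}$ the vector $\inc^n\{b_i\}$ is not of the form $\{k e_{i,\rel}\}_i$, and similarly for $1 \leq n < n_{rec}$ the vector $\inc^n\{0\}$ is never of this form. The key is that $\inc$ increments only the minimum-ratio coordinates, so by an easy induction the entries of $\inc^n\{b_i\}$ stay bounded above by $k_{lim}e_{i,\rel}$ until the target is first reached. An equality $\inc^n\{b_i\} = \{k e_{i,\rel}\}_i$ then forces both $k \leq k_{lim}$ (coordinate bound) and $k \geq k_{lim}$ (since $\inc^n\{b_i\}_i \geq b_i$), whence $k = k_{lim}$ and $n \geq n_{\{b_i\}}$.

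With these two ingredients the formulas follow. For the first, $n_{\{b_i\}}$ applications of Corollary~\ref{cor:GF_induction} with every intermediate $P$-coefficient equal to $1$ give
$$G(\{b_i\})(t) = \sum_{n=0}^{n_{\{b_i\}}-1} H(\inc^n\{b_i\})(t) + G(\{k_{lim}\,e_{i,\rel}\})(t),$$
and $k_{lim}$ applications of Corollary~\ref{cor:GF_reduce_i} convert $G(\{k_{lim}\,e_{i,\rel}\})(t)$ into $(p^{-f_{\base}d}\,t^{d(d-1)/e_{\base}})^{k_{lim}}\,G(\{0\}_{i=1}^m)(t)$. For the second, iterate Corollary~\ref{cor:GF_induction} starting from $\{0\}_{i=1}^m$: the first step contributes $P_0(\{0\}) = p^{f_{\base}}$ and subsequent steps contribute $1$. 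After $n_{rec}$ steps one reaches $G(\{e_{i,\rel}\})(t)$, to which Corollary~\ref{cor:GF_reduce_i} gives $p^{-f_{\base}d}\,t^{d(d-1)/e_{\base}}\,G(\{0\}_{i=1}^m)(t)$. Collecting yields the linear equation
$$G(\{0\}_{i=1}^m)(t)\bigl(1 - p^{f_{\base}(1-d)}\,t^{d(d-1)/e_{\base}}\bigr) = H(\{0\}_{i=1}^m)(t) + p^{f_{\base}}\sum_{n=1}^{n_{rec}-1} H(\inc^n\{0\}_{i=1}^m)(t),$$
which on division gives the stated closed form. The main technical obstacle is the $P$-computation in the first step, which is the only place where local-field algebra enters; everything afterwards is routine bookkeeping.
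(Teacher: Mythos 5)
Your proposal is correct and follows essentially the same route as the paper: iterate Corollary~\ref{cor:GF_induction} down to the flat state $\{k_{lim}e_{i,\rel}\}$, apply Corollary~\ref{cor:GF_reduce_i} $k_{lim}$ times, then re-run the same iteration from $\{0\}$ and solve the resulting linear equation for $G(\{0\})(t)$. The one place you differ is that you rederive the coefficient $P(\sigma,\{[m]\},\{b_i\},\{1\},p)$ directly from its definition (and supply the monotonicity argument that the intermediate vectors $\inc^n(\{b_i\})$ are never of the form $\{ke_{i,\rel}\}$, which the paper leaves implicit), whereas the paper simply cites Theorem~\ref{thm:aiji_count}; this is additional justification rather than a different method.
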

\begin{proof}
First, we use Corollary \ref{cor:GF_induction} repeatedly and get that $G(\{b_i\})=G(\{k_{lim}e_{i,\rel}\})+\sum_{n=0}^{n_{\{b_i\}}-1}H(\inc^n(\{b_i\}))(t)$, since in these cases $P(\sigma, \{[m]\},\{b_i\},\{1\},p)=1$ by Theorem \ref{thm:aiji_count}. Now by applying Corollary \ref{cor:GF_reduce_i} $k_{lim}$ times to $G(\{k_{lim}e_{i,\rel}\})$, we get that it suffices to show $$G(\{0\}_{i=1}^m)(t)=\frac{1}{1-p^{f_{\base}(1-d)}t^{\frac{d(d-1)}{e_{\base}}}}\left(H(\{0\}_{i=1}^m)(t)+p^{f_{\base}}\sum_{n=1}^{n_{rec}-1} H(\inc^n(\{0\}_{i=1}^m))(t) \right).$$ Indeed, applying Corollary \ref{cor:GF_induction} $n_{rec}$ number of times here, we get $$G(\{0\}_{i=1}^m)(t)=H(\{0\}_{i=1}^m)(t)+p^{f_{\base}}\sum_{n=1}^{n_{rec}-1} H(\inc^n(\{0\}_{i=1}^m))(t)+p^{f_{\base}}G(\{e_{i,\rel}\})(t),$$ since $P(\sigma, \{[m]\},\{0\}_{i=1}^m,\{1\},p)=p^{f_{\base}}$. Finally, since $G(\{e_{i,\rel}\})(t)=p^{-f_{\base}d}t^{\frac{d(d-1)}{e_{\base}}}G(\{0\}_{i=1}^m)(t)$ by Corollary \ref{cor:GF_reduce_i}, we can solve for $G(\{0\}_{i=1}^m)(t)$ and obtain $$G(\{0\}_{i=1}^m)(t)=\frac{1}{1-p^{f_{\base}(1-d)}t^{\frac{d(d-1)}{e_{\base}}}}\left(H(\{0\}_{i=1}^m)(t)+p^{f_{\base}}\sum_{n=1}^{n_{rec}-1} H(\inc^n(\{0\}_{i=1}^m))(t)\right).$$
\end{proof}

\section{Proof of Main Result}\label{sec:proof_of_main_result}

For this section, we use the same notations as we did for the section \ref{sec:generating_function}. In particular, we use \textit{absolute} ramification indices and inertia degrees. 

Let $q=p^{f_{\base}}$. Recall that we wish to show that given a tamely ramified splitting type $\sigma = (e_1^{f_1}, \dots, e_m^{f_m})$ over a base field with ramification index $e_{\base}$ and inertia degree $f_{\base}$, that $$\frac{q^{\frac{\sum(e_{i,\rel}-1)f_{i,\rel}}{2}}}{\prod_{i=1}^m f_{i,\rel}\gcd(p^{f_i}-1,e_{i,\rel})}\sum_{\substack{L=\prod_i L_{j_i}\\ j_i \in [\gcd(p^{f_i}-1,e_{i,\rel})]-1}}\int_{\alpha \in \prod_i \fkm_{L_{j_i}}^{b_i}} q^{-\frac{1}{2}v_K(\Delta_{P_{\alpha/K}})} d\lambda_L$$ is a rational function in terms of $q$ and depends on $K$ only through $e(K)$ and $f(K)$.

\begin{proof}[proof of Theorem \ref{thm:main}]

Let $q=p^{f_{\base}}$. It's clear from the definition that $$\frac{1}{\prod_{i=1}^m \gcd(p^{f_i}-1,e_{i,\rel})}\sum_{\substack{L=\prod_i L_{j_i}\\ j_i \in [\gcd(p^{f_i}-1,e_{i,\rel})]}}\int_{\alpha \in \prod_i \fkm_{L_{j_i}}^{b_i}} p^{-\frac{1}{2}v_K(\Delta_{P_{\alpha/K}})} d\lambda_L$$ is precisely $G(\sigma, e_{\base}, f_{\base}, \{b_i\})(q^{-e(K)/2})$. Thus, part 1 of the theorem on the dependence on $K$ via $e(K)$ and $f(K)$ is a corollary of Proposition \ref{prop:induction_a_with_P}, since $G$ is a generating function with coefficients of the form $a(\sigma,e_{\base},f_{\base},c,\{b_i\},p)$.

Now we start on part 2 of the theorem on the fact that it's a rational function in terms of $q$. We again have our base case (Example \ref{ex:base_case_a_GF}) $G((e^f),e,f,b)=p^{-bf}=q^{-b}$, which is a rational function in terms of $q$. Next, we use Proposition \ref{prop:generating_function_recursion}. Now, evaluating at $t=q^{-e_{\base}/2}$, we obtain that $$q^{\frac{\sum(e_{i,\rel}-1)f_{i,\rel}}{2}}G(\{b_i\})(q^{-e_{\base}/2})$$ is a rational function of $q$ as long as $$q^{\frac{\sum(e_{i,\rel}-1)f_{i,\rel}}{2}}H(\{b_i\})(q^{-e_{\base}}/2)$$ are all rational functions. Unraveling the definition of $H(\{b_i\})(t)$, we need to show that when evaluated $t=q^{-e_{\base}/2}$, the product of the following three terms \begin{enumerate}
    \item $P(\sigma,\{E_l\},\{b_i\},\{n_{O,l}\},p)$
    \item $t^{\frac{1}{e_{\base}}(d(d-1)-\sum_l \sum_{i \in E_l}d_i(\sum_{i \in E_l}(d_i/n_{O,l})-1))\beta}$
    \item $\prod_l G(\sigma_l, e(K)h(\beta,n_{O,l}), f(K)\frac{n_{O,l}}{h(\beta,n_{O,l})}, c_l, \{b_i+\bb1_{I}(i)\}_{i \in E_{l}})(t^{n_{O,l}})$
\end{enumerate} is a rational function in terms of $q^{1/2}$, and moreover the exponent of this rational function has the same parity as $\sum(e_{i,\rel}-1)f_{i,\rel}$.

For the first item, we have by Theorem \ref{thm:aiji_count} that it is a rational function in terms of $q$. So, we need that the product of second and third item to be a rational function in terms of $q^{1/2}$ after evaluating at $t=q^{-e_{\base}/2}$, and the exponent of this rational function has the same parity as $\sum(e_{i,\rel}-1)f_{i,\rel}$. To do this, we will evaluate at $t=q^{-e_{\base}}$, show that all powers of $q$ are integral, then track the parity of powers of $q$.

We consider now the second item. Evaluating at $q^{-e_{\base}}$ gives us an exponent of $$-(d(d-1)-\sum_l \sum_{i \in E_l}d_i(\sum_{i \in E_l}(d_i/n_{O,l})-1))\beta).$$ We rewrite this as $$-\sum_l \sum_{i \in E_l}d_i(d-\sum_{i \in E_l}(d_i/n_{O,l}))\beta$$ and show each term is integral. Now $\denom(\beta)|n_{O,l}$ for all $l$ except in the case where $\denom(\beta) \neq 1$ and $n_{O,l}=1$. Since we must have $n_{O,l}|d_i$, to show integrality, we only have to consider the case with $\denom(\beta) \neq 1$ and $n_{O,l}=1$. In this case, $d-\sum_{i \in E_l}(d_i/n_{O,l}))$ is precisely $\sum_{i \notin E_l} d_i$, and since $\denom(\beta)$ divides every such $d_i$, we get that $$-\sum_l \sum_{i \in E_l}d_i(d-\sum_{i \in E_l}(d_i/n_{O,l}))\beta$$ is indeed an integer.

For the third item, we have by induction hypothesis that evaluating $$G(\sigma_l, e(K)h(\beta,n_{O,l}), f(K)\frac{n_{O,l}}{h(\beta,n_{O,l})}, c_l, \{b_i+\bb1_{I}(i)\}_{i \in E_{l}})$$ at $q^{e_{\base} n_{O,l}}$ gives us integer exponents in terms of $q^{\frac{n_{O,l}}{h(\beta,n_{O,l})}}$.

Now we track the parity of the exponents. This is purely an exercise in casework, unravelling definitions, and simplifying until we see that the exponents have even parity. By inductive hypothesis, the parity of the exponent of $q^{\frac{n_{O,l}}{h(\beta,n_{O,l})}}$ in $$G(\sigma_l, e(K)h(\beta,n_{O,l}), f(K)\frac{n_{O,l}}{h(\beta,n_{O,l})}, c_l, \{b_i+\bb1_{I}(i)\}_{i \in E_{l}})$$ evaluated at $q^{e_{\base} n_{O,l}}$ is precisely the parity of $\sum_{i \in E_l}(\frac{e_{i,\rel}}{h(\beta,n_{O,l})}-1)\frac{f_{i,\rel}h(\beta,n_{O,l})}{n_{O,l}}$. Thus, the parity of the exponent in terms of $q$ is the parity of $$\sum_{i \in E_l}(\frac{e_{i,\rel}}{h(\beta,n_{O,l})}-1)f_{i,\rel}.$$

Thus, we must show that $$\sum(e_{i,\rel}-1)f_{i,\rel}-\sum_l \sum_{i \in E_l}d_i(d-\sum_{i \in E_l}(d_i/n_{O,l}))\beta+\sum_l \sum_{i \in E_l}(\frac{e_{i,\rel}}{h(\beta,n_{O,l})}-1)f_{i,\rel}$$ is even. We have a series of equalities modulo $2$. $$\sum(e_{i,\rel}-1)f_{i,\rel}-\sum_l \sum_{i \in E_l}d_i(d-\sum_{i \in E_l}(d_i/n_{O,l}))\beta+\sum_l \sum_{i \in E_l}(\frac{e_{i,\rel}}{h(\beta,n_{O,l})}-1)f_{i,\rel}$$$$=\sum_l\sum_{i \in E_l}d_i(d-\sum_{i \in E_l}(d_i/n_{O,l}))\beta+\sum_l\sum_{i \in E_l}(\frac{e_{i,\rel}}{h(\beta,n_{O,l})}-e_{i,\rel})f_{i,\rel}$$$$=\sum_l\sum_{i \in E_l}f_{i,\rel}(e_{i,\rel}(d-\sum_{i \in E_l}(d_i/n_{O,l}))\beta+(\frac{e_{i,\rel}}{h(\beta,n_{O,l})}-e_{i,\rel})).$$ First of all, modulo $2$, we may remove all $i$ such that $f_{i,\rel}$ is even. So, we reduce to the case where $f_{i,\rel}$ are all odd.

In the case where $f_{i,\rel}$ are all odd, we have that $d_i \equiv e_{i,\rel} \mod 2$. So, we may rewrite our sum as $$\sum_l\sum_{i \in E_l}e_{i,\rel}(\sum_i e_{i,\rel}-\sum_{i \in E_l}(d_i/n_{O,l}))\beta+(\frac{e_{i,\rel}}{h(\beta,n_{O,l})}-e_{i,\rel})).$$ Now for $n_{O,l} \neq 1$, it is actually the degree of a subextension $K'/K$ of some $L_{j_i}$, where $K'$ is of the form $K' = K[a_{i,b_i}\pi_{L_{j_i}}^{b_i}]$. With this notation, $\denom(\beta)=e(K'/K)$, and $n_{O,l}=\denom(\beta)f(K'/K)$. Now note that since we're in the case where $f_{i,\rel}$ are all odd, modulo $2$, we can assume $n_{O,l}=\denom(\beta)$. Now suppose there are no $l$ such that $n_{O,l}=1$. Then, we may rewrite our sum again as $$\sum_l\sum_{i \in E_l}(e_{i,\rel}(\sum_i e_{i,\rel}-\sum_{i \in E_l}\frac{e_{i,\rel}}{\denom(\beta)})\beta+(\frac{e_{i,\rel}}{\denom(\beta)}-e_{i,\rel})).$$ Now let $\num(\beta)$ denote the numerator of $\beta$ in the lowest form. Then, we get $$\num(\beta)\sum_l \sum_{i \in E_l} \frac{e_{i,\rel}}{\denom(\beta)}(\sum_i e_{i,\rel}-\sum_{i \in E_l} \frac{e_{i,\rel}}{\denom(\beta)})+\sum_i (\frac{e_{i,\rel}}{\denom(\beta)}-e_{i,\rel}).$$ Multiplying the first term out, we get $$\num(\beta)\left(\sum_i \frac{e_{i,\rel}}{\denom(\beta)}\sum_i e_{i,\rel}-\sum_l\left(\sum_{i \in E_l} \frac{e_{i,\rel}}{\denom(\beta)}\right)^2\right)+\sum_i (\frac{e_{i,\rel}}{\denom(\beta)}-e_{i,\rel}).$$ But modulo $2$, $x^2=x$, and so our expression simplifies to  $$\num(\beta)\left(\sum_i \frac{e_{i,\rel}}{\denom(\beta)}(\sum_i e_{i,\rel}-1)\right)+\sum_i (\frac{e_{i,\rel}}{\denom(\beta)}-e_{i,\rel}).$$ Now we have two cases. If $\denom(\beta)$ were odd, then we immediately see that both terms are $0 \mod 2$. If $\denom(\beta)$ were even, then $\num(\beta)$ must be odd. In this case, our expression simplifies to $$\sum_i e_{i,\rel} (\sum_i \frac{e_{i,\rel}}{\denom(\beta)}-1).$$ But we know $\denom(\beta)|e_{i,\rel}$ for each $i$, and so this term is even as well.

Now we are left with the case where there is some $l$ with $n_{O,l}=1$, say $n_{O,1}$. All of the above reasoning still applies, and the term $$\sum_{i \in E_l}e_{i,\rel}(\sum_i e_{i,\rel}-\sum_{i \in E_l}(d_i/n_{O,l}))\beta+(\frac{e_{i,\rel}}{h(\beta,n_{O,l})}-e_{i,\rel})),$$ where $l=1$ simplifies to $$\sum_{i \in E_1}e_{i,\rel}\sum_{i \notin E_1}e_{i,\rel}\beta.$$ Thus, we wish to show that $$\sum_{i \in E_1}e_{i,\rel}\sum_{i \notin E_1}e_{i,\rel}\beta+ \num(\beta)\left(\sum_{i \notin E_1} \frac{e_{i,\rel}}{\denom(\beta)}(\sum_i e_{i,\rel}-1)\right)+\sum_{i \notin E_1} (\frac{e_{i,\rel}}{\denom(\beta)}-e_{i,\rel})$$ is even. This expression is equivalent to the following $$\num(\beta)\left(\sum_{i \notin E_1} \frac{e_{i,\rel}}{\denom(\beta)}(\sum_i e_{i,\rel}-1+\sum_{i \in E_1} e_{i,\rel})\right)+\sum_{i \notin E_1} (\frac{e_{i,\rel}}{\denom(\beta)}-e_{i,\rel})$$$$=\num(\beta)\left(\sum_{i \notin E_1} \frac{e_{i,\rel}}{\denom(\beta)}(\sum_{i \notin E_1} e_{i,\rel}-1)\right)+\sum_{i \notin E_1} (\frac{e_{i,\rel}}{\denom(\beta)}-e_{i,\rel}).$$ As before, we separate into cases where $\denom(\beta)$ is odd and $\denom(\beta)$ is even, and see that in both cases, this expression is even.
\end{proof}

\section{Proof of Theorem \ref{thm:reduce_to_OL}} \label{sec:red_to_OL_thm_proof}

First, we setup and recall some notation. For $a$ a nonnegative integer, let $R[x]_a^1$ denote the monic polynomials of degree $a$ over $R$. In the case where $R=\cO_K$, we identify $\cO_K[x]_a^1$ with $\cO_K^a$ via recording the coefficients of every term besides the $x^a$ term. Thus, we also get a measure on this space, which we'll call $\mu_a^1$ (recall that $\mu_a$ is the name of the measure on $\Omega_a$). Let $\cO_K[x]_a^2 \subset \Omega_a$ denote the polynomials $P$ of degree $a$ over $\cO_K$ that reduce to $\bar P = 1$ in the residue field $\kappa_{K}=\cO_K/\fkm_K$. Let $q=p^f$, where $f$ is the absolute inertia degree of $K$. As before, let $w(d)=w_{f(K)}(d)=\frac{q^{d+1}-q^{d}}{q^{d+1}-1}$.

Let $L= \prod_{i=1}^m L_i$ be any \'etale extension of $K$. For $U \subset L$, denote $\rho(U,K)$ to be the proportion of degree $[L:K]$ polynomials (in $\Omega_{[L:K]}$ with a new root in $U$. We have by linearity of expectation that $$\rho(L,K) = \sum_{A \subset [m]} \frac{1}{|\Aut(A)|}\rho(\prod_{i \in A} \cO_{L_i} \times \prod_{i \notin A} L_i \setminus \cO_{L_i},K)$$ where $\Aut(A)$ is the number of $A'$ so that $L_A \cong L_{A'}$. This is equal to $\left|\frac{\Aut(L)}{\Aut(L_A) \Aut(L_{A^c})}\right|$ (since $\Aut(L)$ acts transitively on the set of such $A'$, and the stabilizer of $A$ is precisely $\Aut(L_A) \times \Aut(L_{A^c})$). Now, by the same argument in Lemma \ref{lem:temp1}, we get that $\rho(U,K)=\frac{\bbE[Z^{\new}_{U,L}]}{\Aut(L)}$. Thus, it suffices to show that $$\rho(\prod_{i \in A} \cO_{L_i} \prod_{i \notin A} L_i \setminus \cO_{L_i}, K)=\frac{w([L:K])}{w([L_A:K])w([L_{A^c}:K])}\rho(\cO_{L_A},L_A)\rho(\fkm_{L_{A^c}},L_{A^c}).$$ The central idea to this equivalence is given in \cite[Lemma 2.9]{bhargava_cremona_fisher_gajović_2022}. In the case where $L$ is an honest field extension, the central idea manifests as the fact that polynomials in $\Omega_d$ whose roots are in $\fkm_L$ are identified with the polynomials whose roots are in $L \setminus \cO_L$, via the reversal map $P \to x^d P(1/x)$, and so since the reversal map in this case is just a permutation of the coordinates, the measure of the two sets of polynomials should be the same.

\begin{lemma}[Bhargava, Cremona, Fisher, Gajovi\'{c}]
Let $a \geq b \geq 0$. The multiplication map $$\psi:\cO_K[x]_b^1 \times \cO_K[x]_{a-b}^2 \to \{P \in \Omega_a: \bar P \in \kappa_K[x]^1_b\}$$ is a measure preserving bijection.
\end{lemma}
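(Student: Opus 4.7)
The plan is to establish the bijection via the factorization form of Hensel's lemma, then to verify measure-preservation by a Jacobian computation over $\cO_K$.

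For bijectivity, I first observe that for $(f,g) \in \cO_K[x]_b^1 \times \cO_K[x]_{a-b}^2$ we have $\overline{fg} = \bar f \cdot \bar g = \bar f \cdot 1 = \bar f$, which is monic of degree $b$, and $\deg(fg) \leq a$, so $fg$ lies in the target. For the reverse direction, given $P \in \Omega_a$ with $\bar P \in \kappa_K[x]_b^1$, I apply the factorization form of Hensel's lemma to $\bar P = \bar P \cdot 1$ in $\kappa_K[x]$: the two factors are coprime (since $1$ is a unit) and $\bar P$ is monic of degree $b$, so Hensel produces a unique lift $P = fg$ in $\cO_K[x]$ with $f$ monic of degree $b$, $\bar f = \bar P$, and $\bar g = 1$; the degree bound $\deg g \leq \deg P - b \leq a-b$ is automatic. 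Existence gives surjectivity and uniqueness gives injectivity.

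For measure-preservation, I parametrize the source by $(f_0, \ldots, f_{b-1}, g_0, \ldots, g_{a-b}) \in \cO_K^b \times (1 + \pi_K \cO_K) \times (\pi_K \cO_K)^{a-b}$, where $f = x^b + \sum_{i<b} f_i x^i$ and $g = \sum_j g_j x^j$; the product Haar measure has total mass $q^{-(a-b+1)}$. The target, identified by coefficients as a subset of $\cO_K^{a+1}$, has the same total mass $q^{-(a-b+1)}$. The Jacobian of $\psi$ in these coordinates is the Sylvester-type $(a+1) \times (a+1)$ matrix whose entry in row $k$, column $i$ (for $0 \leq i < b$) is $g_{k-i}$ and whose entry in row $k$, column $b+j$ (for $0 \leq j \leq a-b$) is $f_{k-j}$, under the conventions $f_b = 1$ and all coefficients vanish outside their ranges. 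Reducing modulo $\pi_K$, the conditions $g_0 \equiv 1$ and $g_j \equiv 0$ for $j \geq 1$ collapse the first $b$ columns to the standard basis vectors $e_0, \ldots, e_{b-1}$, and the condition $f_b = 1$ makes each subsequent column $b+j$ have a $1$ at row $b+j$ with zeros below it. Thus the reduced Jacobian is upper triangular with $1$'s on the diagonal, so $|\det J_\psi| = 1$ identically, and the $p$-adic change-of-variables formula yields $\psi_* \mu = \mu$ on the source, proving measure-preservation.

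The main technical point is setting up and reducing the Jacobian. The triangular shape modulo $\pi_K$ depends crucially on the defining conditions of $\cO_K[x]_b^1$ (through $f_b = 1$) and $\cO_K[x]_{a-b}^2$ (through $\bar g = 1$), and the Sylvester-type indexing requires some care, but once organized the triangularity is immediate and the rest of the argument is a standard application of $p$-adic change of variables.
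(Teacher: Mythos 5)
The paper does not prove this lemma: it simply cites it as Lemma 2.9 of Bhargava--Cremona--Fisher--Gajovi\'{c} and states without argument that the proof over the $p$-adics extends to general local fields $K$. Your proposal is a self-contained proof valid over any $K$, and it is correct.

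One point is worth tightening. You invoke ``the factorization form of Hensel's lemma,'' which is most commonly stated under the hypothesis that $P$ itself is monic; here $P \in \Omega_a$ is not assumed monic --- only $\bar P$ is monic of degree $b$, and $P$ may carry nonzero coefficients in $\fkm_K$ above degree $b$. The version you actually need (lift a coprime factorization $\bar P = \bar f_0 \cdot \bar g_0$ with only $\bar f_0$ required monic, and $P$ arbitrary) is true, and your own Jacobian computation in the second half of the argument proves it: starting from the approximation $f^{(0)} = x^b + \sum_{i<b} P_i x^i$, $g^{(0)} = 1$ (so that $f^{(0)} g^{(0)} \equiv P \pmod{\fkm_K}$, using $P_b \equiv 1$ and $P_k \equiv 0$ for $k > b$), the square system $fg = P$ in the $a+1$ unknowns $(f_0,\dots,f_{b-1},g_0,\dots,g_{a-b})$ has Jacobian a unit mod $\pi_K$ by your triangularity observation, so multivariate Hensel (Newton iteration over the complete local ring $\cO_K$) gives existence and uniqueness of the lift without any monicity hypothesis on $P$. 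It would read more cleanly to make that the bijectivity argument outright, so that both halves of the lemma rest on the single Jacobian observation rather than on an appeal to a named lemma whose usual statement does not quite cover the case at hand.

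The Jacobian bookkeeping itself is correct: modulo $\pi_K$, columns $0,\dots,b-1$ become $e_0,\dots,e_{b-1}$ since $\bar g = 1$, and column $b+j$ has entries $\bar f_{k-j}$ which vanish for $k > b+j$ and equal $1$ at $k=b+j$ since $f_b = 1$ and $\deg f = b$, so the reduction is upper unitriangular and $|\det J_\psi| = 1$ identically. The total-mass check $q^{-(a-b+1)}$ on both sides is a good sanity check but is not logically required once the change-of-variables step is in place.
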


Note that the authors proved this over the $p$-adics, but it extends to all extensions as well. Applying this lemma, we get that $$\{P_1 \in \cO_K[x]_{[L_A:K]}^1: P_1 \text{ has a new root in } \cO_{L_A}\} \times \{P_2 \in \cO_K[x]_{[L_{A^c}:K]}^2: P_2 \text{ has a new root in } \prod_{i \notin A}L_i \setminus \cO_{L_i}\},$$ which is a subset of $\cO_K[x]_{[L_A:K]}^1 \times \cO_K[x]_{[L_{A^c}:K]}^2$, has the same measure as $$\{P \in \Omega_{[L:K]}: \bar P \in \kappa_K[x]^1_b, P \text{ has a new root in } \cO_{L_A} \times \prod_{i \notin A}L_i \setminus \cO_{L_i}\},$$ which is a subset of $\{P \in \Omega_{[L:K]}: \bar P \in \kappa_K[x]^1_{[L_{A}:K]}\}.$

Thus, it suffices for us to show the following \begin{enumerate}
    \item $$\rho(\cO_{L_A},K)=w([L_A:K])\mu_{[L_A:K]}^1(\{P_1 \in \cO_K[x]_{[L_A:K]}^1: P_1 \text{ has a new root in } \cO_{L_A}\}),$$
    \item $$\rho(\fkm_{L_{A^c}},L_{A^c})=qw([L_{A^c}:K]))\mu_{[L_{A^c}:K]}(\{P_2 \in \cO_K[x]_{[L_{A^c}:K]}^2: P_2 \text{ has a new root in } \prod_{i \notin A}L_i \setminus \cO_{L_i}\}),$$
    \item and $$\rho(\prod_{i \in A} \cO_{L_i} \prod_{i \notin A} L_i \setminus \cO_{L_i}, K)$$$$=qw([L:K])\mu_{[L:K]}(\{P \in \Omega_{[L:K]}: \bar P \in \kappa_K[x]^1_b, P \text{ has a new root in } \cO_{L_A} \times \prod_{i \notin A}L_i \setminus \cO_{L_i}\}).$$
\end{enumerate}

We have a surjective map $$\varphi: \{\sum_{i=0}^{[L_A:K]}a_i x^i \in \Omega_{[L_A:K]}: v_p(a_{[L_A:K]}) \leq v_p(a_i), i=0, \dots, [L_A:K]-1\} \to \cO_K[x]_{[L_A:K]}^1$$ via dividing by the coefficient of $x^{[L_A:K]}$. One can check on basic opens that the pushforward of $\mu_{[L_A:K]}$ via this map is precisely $w([L_A:K])\mu_{[L_A:K]}^1$. Now, $$\varphi^{-1}(\{P_1 \in \cO_K[x]_{[L_A:K]}^1: P_1 \text{ has a new root in } \cO_{L_A}\}),$$ is precisely the set of which $\rho(\cO_{L_A},K)$ is the measure. Hence, we have our first item.

The second and third item are similar, and so we will just prove the second item. Let $$S_{\beta}=\{P_2 \in \cO_K[x]_{[L_{A^c}:K]}: P_2 \text{ has a new root in } \prod_{i \notin A}L_i \setminus \cO_{L_i}, \bar P_2 = \beta\},$$ so that $S_1$ is our set on the RHS of item 2. Then, we see by multiplying by units, that $\mu_{[L_A^c:K]}(S_1)=\mu_{[L_A^c:K]}(S_{\beta})$ for any $\beta \in \kappa_K \setminus \{0\}$. Now let $$S=\{P_2 \in \cO_K[x]_{[L_{A^c}:K]}: P_2 \text{ has a new root in } \prod_{i \notin A}L_i \setminus \cO_{L_i}\},$$ so that $$\rho(\prod_{i \notin A} L_i \setminus \cO_{L_i},K)=\mu_{[L_{A^c}:K]}(S).$$ Then, $S = \pi_K S\sqcup \bigsqcup_{\beta \in \kappa_K \setminus \{0\}} S_{\beta}$. Hence, $$(1-q^{-([L_{A^c}:K]+1)})\rho(\prod_{i \notin A} L_i \setminus \cO_{L_i},K)=(q-1)\mu_{[L_{A^c}:K]}(S_1).$$ Thus, we get $$\rho(\prod_{i \notin A} L_i \setminus \cO_{L_i},K)=qw([L_{A^c}:K])\mu_{[L_{A^c}:K]}(S_1),$$ as desired.

\section{Computation of $P(\sigma,\{E_l\},\{b_i\},\{n_{O,l}\},p)$} \label{sec:counting_choices}

In this section, our goal is to calculate $P(\sigma,\{E_l\},\{b_i\},\{n_{O,l}\},p)$, which was defined to be $\frac{1}{\prod_{i \in [m]} \gcd(p^{f_i}-1,e_{i,\rel})}$ times the number of elements $$(\{j_i\}_{i=1}^m, \{a_{i,b_i}\}_{i \in I}) \in \prod_{i=1}^m [\gcd(p^{f_i}-1,e_{i,\rel}] \times \prod_{i \in I} T_{f_i,p}$$ satisfying the following three conditions: 
\begin{enumerate}
    \item For all $i_1,i_2 \in I$, $i_1,i_2$ belong to the same $E_l$ if and only if $a_{i_1,b_{i_1}}\pi_{L_{j_{i_1}}}^{b_{i_1}}$ and $a_{i_2,b_{i_2}}\pi_{L_{j_{i_2}}}^{b_{i_2}}$ are Galois conjugates.
    \item If $i \in I$, then $n_O(a_{i,b_{i}}\pi_{L_{j_{i}}}^{b_{i}})=n_{O,l}$.
    \item If $I \neq [m]$, then there is some $E_{l_0}$ containing all of $[m] \setminus I$. Moreover, for any $i \in E_{l_0} \cap I$, $a_{i,b_i}=0$, and so in particular, $n_{O,l_0}=1$.
\end{enumerate}

Now suppose $P(\sigma,\{E_l\},\{b_i\},\{n_{O,l}\},p)$ is nonzero. Then, first of all, since for $i \in I$, we have that $e(K[a_{i,b_i}\pi_{L_{j_i}}^{b_i}]/K)$ is $\denom(\beta)$ unless $a_{i,b_i}=0$, we must have that unless $n_{O,l}=1$ and $\denom(\beta) \neq 1$ (in which case $a_{i,b_i}\pi_{L_{j_i}}^{b_i}=0$), $\denom(\beta)|n_{O,l}$ for all $l$ and $\frac{n_{O,l}}{\denom(\beta)}|f_{i,\rel}$ for all $i \in E_l$. Second of all, we must have that for any given $k$, the number of elements $E_l$ of the partition such that $n_{O,l}=k$ is at most the number of possible Galois orbits of size $k$ among elements of the form $a_{i,b_i}\pi_{L_{j_i}}^{b_i}$. We will shortly see that the number of such Galois orbits is $P(f_{\base},k/\denom(\beta),p)$, where $$P(f_{\base},k',p)=\frac{1}{k'}((p^{f_{\base}k'}-1)-\sum_{\ell_1 \text{ prime}, \ell_1|k'} (p^{f_{\base}k'/\ell_1}-1)+\sum_{\ell_1 \neq \ell_2 \text{ prime}, \ell_i|k'} (p^{\frac{f_{\base}k'}{\ell_1 \ell_2}}-1)-\dots).$$ Thus, $P(\sigma,\{E_l\},\{b_i\},\{n_{O,l}\},p)=0$ unless all of following conditions are satisfied: \begin{enumerate}
    \item For all $l$ such that $n_{O,l} \neq 1$, $\denom(\beta)|n_{O,l}$.
    \item For all $i,l$ such that $i \in E_l$ and $n_{O,l} \neq 1$, $\frac{n_{O,l}}{\denom(\beta)} | f_{i,\rel}$.
    \item $\#\{l: n_{O,l}=k\} \leq P(f_{\base},k/\denom(\beta),p)$.
    \item If $\denom(\beta) \neq 1$, then there is at most one $l_0$ so that $n_{O,l_0}=1$.
    \item If $I \neq [m]$, then there is some $l_0$ such that $E_{l_0}$ contains all of $[m] \setminus I$. 
\end{enumerate}

For the rest of this section, assume all of these conditions hold. Let $l_0$ be defined as in the conditions above. Now let $F_k = \{i \in [m]: \exists l, n_{O,l}=k, i \in E_l\}.$ Let $P_k(\sigma,\{E_l\},\{b_i\},\{n_{O,l}\},p)$ be the number of elements $$(\{j_i\}_{i \in F_k}, \{a_{i,b_i}\}_{i \in I \cap F_k}) \in \prod_{i \in F_k} [\gcd(p^{f_i}-1,e_{i,\rel}] \times \prod_{i \in F_k \cap I} T_{f_i,p}$$ satisfying the following conditions: 
\begin{enumerate}
    \item For all $i_1,i_2 \in F_k \cap I$, $i_1,i_2$ belong to the same $E_l$ if and only if $a_{i_1,b_{i_1}}\pi_{L_{j_{i_1}}}^{b_{i_1}}$ and $a_{i_2,b_{i_2}}\pi_{L_{j_{i_2}}}^{b_{i_2}}$ are Galois conjugates.
    \item If $i \in I$, then $n_O(a_{i,b_{i}}\pi_{L_{j_{i}}}^{b_{i}})=k$.
    \item If $k=1$ and $I \neq [m]$, then for any $i \in E_{l_0} \cap I$, $a_{i,b_i}=0$.
\end{enumerate} Since these conditions are independent as $k$ varies, we see that $$P(\sigma,\{E_l\},\{b_i\},\{n_{O,l}\},p)=\prod_k P_k(\sigma,\{E_l\},\{b_i\},\{n_{O,l}\},p),$$ as $k$ varies over integers for which $F_k$ is nonempty. Thus, it suffices to calculate $P_k$. Now $$P_k(\sigma,\{E_l\},\{b_i\},\{n_{O,l}\},p) = P((e_i^{f_i})_{i \in F_k}, \{E_l\}_{l, n_{O,l}=k}, \{b_i\}_{i \in F_k}, \{k\}_{l, n_{O,l}=k},p).$$ Thus, it suffices for us to calculate $P(\sigma,\{E_l\},\{b_i\},\{n_{O,l}\},p)$, in the case where all $n_{O,l}$ are equal to $k$, which we will now proceed to do in the case where $k \neq 1$. The case where $k=1$ is similar, but just requires more casework.

Now we start with the simplest case, where $\sigma$ is of the form $(e_1^{f_1})$. Thus, $m=1$ and we have that the only partition of $[1]$ consist of one set $E_1=\{1\}$.

\begin{lemma}
\label{lem:nO_simple_case}
Suppose $\denom(\beta)|k$ (here $\beta=\min_i \frac{b_i}{e_{i,\rel}} = \frac{b_1}{e_{1,\rel}}$, and so $\denom(\beta)=\frac{e_{1,\rel}}{\gcd(b_1,e_{1,\rel})}$), then $$\#\{(a_{1,b_1},j_1) \in (T_{f_1,p} \setminus \{0\}) \times [\gcd(p^{f_1}-1,e_{1,\rel})], n_O(a_{1,b_1}\pi_{L_{j_1}}^{b_1})=k\}$$$$=\gcd(p^{f_1}-1,e_{1,\rel})\frac{k}{\denom(\beta)}P(f_{\base},\frac{k}{\denom(\beta)},p).$$
\end{lemma}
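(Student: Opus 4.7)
The plan is to explicitly describe the subfield $K[\alpha]\subseteq L_{j_1}$ generated by $\alpha:=a_{1,b_1}\pi_{L_{j_1}}^{b_1}$, reduce the orbit-size condition to a statement about a single Teichm\"uller root of unity, and then count by combining inclusion-exclusion on primitive elements of an unramified tower with a fiber-counting argument. Set $n:=\denom(\beta)=e_{1,\rel}/\gcd(b_1,e_{1,\rel})$ and $m:=\num(\beta)=b_1/\gcd(b_1,e_{1,\rel})$, so that $\beta=m/n$ and $\gcd(m,n)=1$; let $D:=\gcd(p^{f_1}-1,e_{1,\rel})$. Tameness, together with the fact that $L_{j_1}$ contains an $e_{1,\rel}$-th root of $\zeta_{p^{f_1}-1}^{j_1}\pi_K$, forces $e_{1,\rel}\mid p^{f_1}-1$, so in particular $n\mid D$; this divisibility will do most of the work.

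First I would raise $\alpha$ to the $n$-th power. Using $\pi_{L_{j_1}}^{e_{1,\rel}}=\zeta^{j_1}\pi_K$ with $\zeta:=\zeta_{p^{f_1}-1}$, one obtains
\[
\alpha^n=a_{1,b_1}^n\,\zeta^{j_1 m}\,\pi_K^{m}=\eta\,\pi_K^{m},\qquad \eta:=a_{1,b_1}^n\zeta^{j_1 m}\in T_{f_1,p}.
\]
Since $\pi_K\in K$ and $\eta$ has order prime to $p$, the subfield $K[\alpha^n]=K[\eta]$ is unramified of some degree $f''\mid f_{1,\rel}$. Because $v_K(\alpha)=m/n$ with $\gcd(m,n)=1$, the further extension $K[\alpha]/K[\eta]$ is necessarily totally ramified of degree exactly $n$. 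Hence $n_O(\alpha)=[K[\alpha]:K]=nf''$, and the condition $n_O(\alpha)=k$ is equivalent to $f''=k/\denom(\beta)=:k'$, i.e., $[K[\eta]:K]=k'$.

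It then remains to count the pairs $(a_{1,b_1},j_1)$ producing some $\eta$ of exact degree $k'$ over $K$. The number of $\eta\in T_{f_1,p}\setminus\{0\}$ which are primitive of degree exactly $k'$ over $K$ is $k'\,P(f_{\base},k',p)$, since the defining formula of $P(f_{\base},k',p)$ is precisely the M\"obius/inclusion-exclusion count of primitive elements of $\bbF_{q^{k'}}^{\times}$ divided by $k'$. For each such $\eta$ I would count its preimages by writing $a_{1,b_1}=\zeta^{c}$ and reducing to the additive linear equation $cn+j_1 m\equiv \log_{\zeta}\eta\pmod{p^{f_1}-1}$: for fixed $j_1\in\{0,\dots,D-1\}$, this is solvable in $c$ iff $n\mid j_1 m-\log_{\zeta}\eta$, in which case there are $\gcd(n,p^{f_1}-1)=n$ solutions (using $n\mid p^{f_1}-1$); and since $\gcd(m,n)=1$ the residue $j_1\bmod n$ is uniquely determined, so exactly $D/n$ of the $j_1$ qualify (using $n\mid D$). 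Each valid $\eta$ thus has $n\cdot(D/n)=D$ preimages, giving the total count $D\cdot k'\,P(f_{\base},k',p)$, which is the right-hand side of the lemma. The delicate step will be this fiber count: the equidistribution of the pairs $(a_{1,b_1},j_1)$ over the possible $\eta$ is exactly what makes the answer factor as claimed, and both $n\mid p^{f_1}-1$ and $\gcd(m,n)=1$ are indispensable for this uniformity.
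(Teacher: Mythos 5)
Your approach is genuinely different from the paper's: rather than parametrizing the Galois orbit of $\alpha=a_{1,b_1}\pi_{L_{j_1}}^{b_1}$ directly in exponent coordinates and running inclusion--exclusion on the minimal Frobenius period (which is what the paper does), you pass to the unramified subfield $K[\eta]=K[\alpha^n]$ with $n=\denom(\beta)$, and since $v_K(\alpha)=m/n$ with $\gcd(m,n)=1$ and $\alpha^n\in K[\eta]$, you correctly conclude that $K[\alpha]/K[\eta]$ is totally ramified of degree exactly $n$, so $n_O(\alpha)=n\,[K[\eta]:K]$. This tower decomposition is a clean conceptual route, and the identification of $k'P(f_{\base},k',p)$ with the number of $\eta$ of exact degree $k'$ over $K$ is also right.

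There is, however, a genuine error in the fiber count. The claim that tameness together with the existence in $L_{j_1}$ of an $e_{1,\rel}$-th root of $\zeta_{p^{f_1}-1}^{j_1}\pi_K$ forces $e_{1,\rel}\mid p^{f_1}-1$ is false: tame totally ramified extensions of degree $e$ exist over any local field with $p\nmid e$ regardless of whether $e\mid q-1$; the only role of $\gcd(q-1,e)$ is to count isomorphism classes (Theorem~\ref{thm:tamely_ram_extns}). For a counterexample take $K=\bbQ_5$, $f_1=f_{\base}=1$, $e_{1,\rel}=3$: then $p^{f_1}-1=4$ and $3\nmid 4$. Your fiber count then relies on the false consequences $n\mid p^{f_1}-1$ and $n\mid D$; with $b_1=1$ in the counterexample, $n=3$ and $D=1$, and neither ``$n$ solutions in $c$'' nor ``$D/n$ valid $j_1$'' is meaningful. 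The repair is to replace $n$ by $\gcd(n,p^{f_1}-1)$ throughout: the congruence in $c$ is solvable iff $\gcd(n,p^{f_1}-1)$ divides the right-hand side, giving $\gcd(n,p^{f_1}-1)$ solutions for each qualifying $j_1$; and since $\gcd(n,p^{f_1}-1)$ divides both $e_{1,\rel}$ and $p^{f_1}-1$ it divides $D$, so $D/\gcd(n,p^{f_1}-1)$ values of $j_1\in[D]$ qualify. The two factors cancel and each $\eta$ still has exactly $D$ preimages, so your final formula survives. This $\gcd$ observation is exactly Lemma~\ref{lem:equidistribution} in the paper; citing it would close the gap.
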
 

Hence, in this simple case, we have that $$P((e_1^{f_1}),\{E_1\},\{b_1\},\{k\},p)=\gcd(p^{f_1}-1,e_{1,\rel})\frac{k}{\denom(\beta)}P(f_{\base},\frac{k}{\denom(\beta)},p)$$ if $k \neq 1$ (since $a_{1,b_1}$ cannot be $0$ if $n_{O,1}=k \neq 1$). Next, we do the case where $\sigma=(e_1^{f_1},e_2^{f_2})$, and the general case will follow by similar reasoning. This case comes down to the following.

\begin{lemma}
\label{lem:same_gal_conj_count}
Suppose $b_1/e_{1,\rel}=b_2/e_{2,\rel}$ (and so $\beta=\frac{b_1}{e_{1,\rel}}=\frac{b_2}{e_{2,\rel}}$ and $\denom(\beta)=\frac{e_{1,\rel}}{\gcd(b_1,e_{1,\rel})}=\frac{e_{2,\rel}}{\gcd(b_2,e_{2,\rel})}$). Given an element $a_{1,b_1}\pi_{L_{j_1}}^{b_1}$ with $k$ Galois conjugates, the number of elements $a_{2,b_2} \pi_{L_{j_2}}^{b_2}$ such that it is a Galois conjugate of $a_{1,b_1}\pi_{L_{j_1}}^{b_1}$ is $\frac{k}{\denom(\beta)}\gcd(p^{f_2}-1,e_{2,\rel})$.
\end{lemma}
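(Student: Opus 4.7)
The plan is to reduce the lemma to counting solutions of an explicit equation in roots of unity via a canonical choice of uniformizers in $\bar K$.

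First, I would fix $\pi := \pi_K^{1/e_{2,\rel}} \in \bar K$ and a primitive $e_{2,\rel}(p^{f_2}-1)$-th root of unity $\zeta \in \bar K$, chosen compatibly so that $\pi_{L_{j_2}} = \zeta^{j_2}\pi$ for every $j_2$; under this choice $\zeta^{p^{f_2}-1}$ is a primitive $e_{2,\rel}$-th root of unity, which one may take as $\zeta_{e_{2,\rel}}$. Then $\pi_{L_{j_2}}^{b_2} = \zeta^{j_2 b_2}\pi^{b_2}$, and the equation $a_{2,b_2}\pi_{L_{j_2}}^{b_2} = \gamma$ becomes the algebraic constraint $a_{2,b_2}\zeta^{j_2 b_2} = u$ in $\bar K$, where $u := \gamma/\pi^{b_2}$ and $\gamma$ is a fixed Galois conjugate of $\gamma_0 := a_{1,b_1}\pi_{L_{j_1}}^{b_1}$.

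Second, I would count pairs $(a_{2,b_2}, j_2)$ satisfying this equation for a fixed $\gamma$. Given $j_2$, the value $a_{2,b_2}=u\zeta^{-j_2 b_2}$ is forced, and the constraint $a_{2,b_2}\in T_{f_2,p}$ translates, upon raising to the $(p^{f_2}-1)$-th power, into $\zeta_{e_{2,\rel}}^{j_2 b_2} = u^{p^{f_2}-1}$. Because $\zeta_{e_{2,\rel}}^{b_2}$ is a primitive $\denom(\beta)$-th root of unity, the map $j_2 \mapsto \zeta_{e_{2,\rel}}^{j_2 b_2}$ takes values in the group of $\denom(\beta)$-th roots of unity with period $\denom(\beta)$; provided $\denom(\beta)\mid \gcd(p^{f_2}-1,e_{2,\rel})$, each such root is attained exactly $\gcd(p^{f_2}-1,e_{2,\rel})/\denom(\beta)$ times by $j_2$ in its indexing set. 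Granting that $u^{p^{f_2}-1}$ is indeed a $\denom(\beta)$-th root of unity, this gives exactly $\gcd(p^{f_2}-1,e_{2,\rel})/\denom(\beta)$ solutions per conjugate $\gamma$; summing over the $k$ Galois conjugates of $\gamma_0$ yields the claimed total $\frac{k}{\denom(\beta)}\gcd(p^{f_2}-1,e_{2,\rel})$.

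The main obstacle is the verification of the divisibility $\denom(\beta)\mid p^{f_2}-1$ and of the membership $u^{p^{f_2}-1}\in\mu_{\denom(\beta)}$ for every conjugate $\gamma$ (where $\mu_{\denom(\beta)}$ denotes the $\denom(\beta)$-th roots of unity). Both facts I would deduce from the structural observation that any pair $(a_{2,b_2}, j_2)$ contributing to the count forces $\gamma\in L_{j_2}$, hence $K[\gamma]\subset L_{j_2}$; comparing the ramification index $\denom(\beta)$ and inertia degree $k/\denom(\beta)$ of $K[\gamma]$ with the corresponding invariants $e_{2,\rel}$ and $f_{2,\rel}$ of $L_{j_2}$ via Theorem \ref{thm:tamely_ram_extns} yields $\denom(\beta)\mid e_{2,\rel}$ and $\denom(\beta)\mid p^{f_2}-1$, while writing $\gamma$ explicitly as a Galois conjugate of $a_{1,b_1}\pi_{L_{j_1}}^{b_1}$ and directly computing $u^{p^{f_2}-1}$ shows it is a $\denom(\beta)$-th root of unity. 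In the degenerate case where the divisibility fails, no pairs exist and both sides of the lemma vanish.
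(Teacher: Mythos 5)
Your reduction to an explicit root-of-unity equation is in the same spirit as the paper's modular-arithmetic computation, but the counting step has a genuine gap. You group solutions by which conjugate $\gamma$ they hit and assert a uniform count of $\gcd(p^{f_2}-1,e_{2,\rel})/\denom(\beta)$ per conjugate; this is false in general, and your claim that both sides vanish when $\denom(\beta)\nmid\gcd(p^{f_2}-1,e_{2,\rel})$ is incorrect. Take $K=\bbQ_3$, $e_{\base}=f_{\base}=1$, $e_{1,\rel}=e_{2,\rel}=4$, $f_{1,\rel}=f_{2,\rel}=1$, $b_1=b_2=1$, so $\beta=\tfrac14$, $\denom(\beta)=4$, and $\gcd(p^{f_2}-1,e_{2,\rel})=\gcd(2,4)=2$. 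For $\gamma_0=3^{1/4}$ one has $k=4$ conjugates $\zeta_4^r 3^{1/4}$, $r=0,\dots,3$. The nonzero elements $a_{2,1}\pi_{L_{j_2}}$ are $\pm 3^{1/4}$ and $\pm(-3)^{1/4}$; exactly two of them, namely $\pm 3^{1/4}$, are conjugates of $\gamma_0$, agreeing with the lemma's value $\frac{k}{\denom(\beta)}\gcd(p^{f_2}-1,e_{2,\rel})=2$ but contradicting ``both sides vanish.'' Per conjugate the contributions are $1,0,1,0$, not the constant $\gcd(p^{f_2}-1,e_{2,\rel})/\denom(\beta)=\tfrac12$, which is not even an integer here.

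The specific inference that breaks is ``$K[\gamma]\subset L_{j_2}$ forces $\denom(\beta)\mid p^{f_2}-1$'': in the example $K[\gamma_0]=L_{j_1}$ is itself an admissible $L_{j_2}$, yet $4\nmid 2$. (The implication would hold if $L_{j_2}/K$ were Galois, but tame extensions need not be.) Your argument is also circular, since you derive the divisibility needed for the per-conjugate count from the existence of contributing pairs, which you then use to deduce that count. The paper's proof sidesteps all of this by grouping the conjugates coarsely by the Frobenius exponent $r_1\in[k/\denom(\beta)]$ rather than one at a time, and for each $r_1$ applying Lemma \ref{lem:equidistribution} to get exactly $\gcd(p^{f_2}-1,e_{2,\rel})$ solutions $(s_2,j_2)$, with no divisibility hypothesis on $\denom(\beta)$ and $p^{f_2}-1$. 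To salvage a conjugate-by-conjugate decomposition you would need to describe, without assuming $\denom(\beta)\mid p^{f_2}-1$, how the values $u^{p^{f_2}-1}\in\mu_{\denom(\beta)}$ are distributed as $\gamma$ runs over the $k$ conjugates, and match that against the (generally non-uniform) fibers of $j_2\mapsto\zeta_{e_{2,\rel}}^{j_2 b_2}$ on its indexing set.
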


First of all, note that combining Lemmas \ref{lem:nO_simple_case} and \ref{lem:same_gal_conj_count} shows that the number of Galois orbits of size $k$ among elements of the form $a_1 \pi_{L_1}^{b_1}$ is precisely $P(f_{\base},k/\denom(\beta),p)$ (here $\beta = \frac{b_1}{e_{1,\rel}}$) by setting $b_1=b_2$ and $e_{1,\rel}=e_{2,\rel}$ in Lemma \ref{lem:same_gal_conj_count}. In addition, Lemma \ref{lem:same_gal_conj_count} shows that after fixing a Galois orbit of size $k$ among elements of the form $a_1 \pi_{L_1}^{b_1}$, there are $\frac{k}{\denom(\beta)}\gcd(p^{f_1}-1,e_{1,\rel})$ choices of $a_1,j_1$ such that $a_1 \pi_{L_1}^{b_1}$ belongs in the chosen Galois orbit.

Now, by Lemmas \ref{lem:nO_simple_case} and \ref{lem:same_gal_conj_count}, if $k \neq 1$, we get a total of $$\gcd(p^{f_2}-1,e_{2,\rel})\gcd(p^{f_1}-1,e_{1,\rel})\left(\frac{k}{\denom(\beta)}\right)^2P(f_{\base},k/\denom(\beta),p)$$ as our count for $$P((e_1^{f_1},e_2^{f_2}),\{\{1,2\}\},\{b_1,b_2\},\{k\},p).$$ Similarly, $$P((e_1^{f_1},e_2^{f_2}),\{\{1\},\{2\}\},\{b_1,b_2\},\{k,k\},p)$$ is $$\gcd(p^{f_1}-1,e_{1,\rel})\gcd(p^{f_2}-1,e_{2,\rel})\left(\frac{k}{\denom(\beta)}\right)^2(P(f_{\base},k/\denom(\beta),p)^2-P(f_{\base},k/\denom(\beta),p)).$$ One may interpret this as follows. For $$P((e_1^{f_1},e_2^{f_2}),\{\{1,2\}\},\{b_1,b_2\},\{k\},p),$$ we need to choose an orbit of size $k$ among elements of the form $a_i \pi_{L_j}^{b_i}$, and there are $P(f_{\base},k/\denom(\beta),p)$ many of such orbits, as mentioned in the above paragraph. Now, having chosen an orbit, we need to choose Galois conjugates $a_1 \pi_{L_{j_1}}^{\beta}$ and $a_2 \pi_{L_{j_2}}^{\beta}$ in that orbit, where $(a_1,j_1) \in T_{f_1,p} \times [\gcd(p^{f_1}-1,e_{1,\rel}]$ and $(a_2,j_2) \in T_{f_2,p} \times [\gcd(p^{f_2}-1,e_{2,\rel}]$. The number of ways we can choose $(a_i,j_i)$ is $\gcd(p^{f_i}-1,e_{i,\rel})\frac{k}{\denom(\beta)}$. Multiplying these together gives us the desired count. Similarly, for $P((e_1^{f_1},e_2^{f_2}),\{\{1,2\}\},\{b_1,b_2\},\{k\},p)$, we need to do the exact same thing, except we need to choose two different orbits of size $k$ (in order), and there are $$\frac{P(f_{\base},k/\denom(\beta),p)!}{(P(f_{\base},k/\denom(\beta),p)-2)!} = P(f_{\base},k/\denom(\beta),p)^2-P(f_{\base},k/\denom(\beta),p)$$ many choices.

We may apply the same reasoning to the general case of $\sigma = (e_1^{f_1}, \dots, e_m^{f_m})$. In this case, we obtain that if $k \neq 1$ and we have $M$ elements in our partition $\{E_l\}_{l=1}^M$, $$P(\sigma,\{E_l\}_{l=1}^M,\{b_i\}_{i=1}^m,\{k\}_{l=1}^M,p) = \left(\prod_{i=1}^m \gcd(p^{f_i}-1,e_{i,\rel})\right)\left(\frac{k}{\denom(\beta)}\right)^{m} \frac{P(f_{\base},k/\denom(\beta),p)!}{(P(f_{\base},k/\denom(\beta),p)-M)!}.$$


The calculation of $P(\sigma,\{E_l\}_{l=1}^M,\{b_i\}_{i=1}^m,\{1\}_{l=1}^M,p)$ is similar. In this case we also have to consider the cases of whether $[m] \neq I$, and so there is some $i$ so that $\beta < b_i/e_i$. Let $\{E_l\}_{l=1}^k$ be the partition we desire. Now if $\denom(\beta) \neq 1$, then actually there can only be one partition, as there's only one element of the form $a_{i,b_i}\pi_{L_{j_i}}^{b_i/e_{i,\rel}}$, where $a_{i,b_i} \in T_{f_i,p}$, that belongs to $K$, which is when $a_{i,b_i}=0$. Thus, in this case we obtain $\prod_{i=1}^m \gcd(p^{f_i}-1,e_{i,\rel})$ total choices, coming from the choices of $j_i$. Next, if $\denom(\beta)=1$, and so $\beta$ is an integer, then we get something more interesting. In this case, $\pi_{L_{j_i}}^{b_i/e_{i,\rel}} = \zeta_{p^{f_i}-1}^{j_i \beta}\pi_K^{\beta}$. Since we're considering elements of the form $a_{i,b_i}\pi_{L_{j_i}}^{b_i/e_{i,\rel}}$, and $\zeta_{p^{f_i}-1}^{j_i \beta} \in T_{f_i,p}$, we can ignore this factor by shifting each choice of $a_{i,b_i}$ by $\zeta_{p^{f_i}-1}^{j_i \beta}$. Thus, the choices of $j_i$ are independent from the choices of $a_{i,b_i}$. Hence, we just need to make choices for $a_{i,b_i}$. First, since we expect $a_{i,b_i} \pi_K^{\beta}$ to have size $1$ Galois orbit over $K$, it is actually in $K$, and so we must have that $a_{i, b_i} \in T_{f_{\base},p}$. Second, we need that $i_1$ and $i_2$ belong to the same partition if and only if $a_{i_1,b_{i_1}}=a_{i_2,b_{i_2}}$. Thus, if $I=[m]$, then we get a total of $$ \frac{p^{f_{\base}}!}{(p^{f_{\base}}-M)!}$$ choices of $\{a_{i,b_i}\}_{i \in I}$. If $I \neq [m]$, we are forced to choose $0$ to represent one of our Galois orbits, and so we get $$\frac{(p^{f_{\base}}-1)!}{(p^{f_{\base}}-1-(M-1))!}$$ choices of $\{a_{i,b_i}\}_{i \in I}$. Thus, $$P(\sigma,\{E_l\}_{l=1}^M,\{b_i\}_{i=1}^m,\{1\}_{l=1}^M,p)=\prod_{i=1}^m \gcd(p^{f_i}-1,e_{i,\rel})\begin{cases}
1 & \denom(\beta) \neq 1 \\
\frac{p^{f_{\base}}!}{(p^{f_{\base}}-M)!} & \denom(\beta) = 1, I = [m] \\
\frac{(p^{f_{\base}}-1)!}{((p^{f_{\base}}-1)-(M-1))!} & \denom(\beta) = 1, I \neq [m]
\end{cases}.$$

Now with the definition $F_k = \{i \in [m]: \exists l, n_{O,l}=k, i \in E_l\}$ recall that we had $$P(\sigma,\{E_l\},\{b_i\},\{n_{O,l}\},p)=\prod_k P_k(\sigma,\{E_l\},\{b_i\},\{n_{O,l}\},p),$$ as $k$ varies over integers for which $F_k$ is nonempty and $$P_k(\sigma,\{E_l\},\{b_i\},\{n_{O,l}\},p) = P((e_i^{f_i})_{i \in F_k}, \{E_l\}_{l, n_{O,l}=k}, \{b_i\}_{i \in F_k}, \{k\}_{l, n_{O,l}=k},p),$$ the RHS of which we just calculated. Thus, we obtain the following theorem, which is the main theorem of this section.
\begin{theorem}
\label{thm:aiji_count}
$P(\sigma,\{E_l\},\{b_i\},\{n_{O,l}\},p)=0$ unless all of the following are satisfied: \begin{enumerate}
    \item For all $l$ such that $n_{O,l} \neq 1$, $\denom(\beta)|n_{O,l}$.
    \item For all $i,l$ such that $i \in E_l$ and $n_{O,l} \neq 1$, $\frac{n_{O,l}}{\denom(\beta)} | f_{i,\rel}$.
    \item If $\denom(\beta) \neq 1$, then there is at most one $l_0$ so that $n_{O,l_0}=1$.
    \item If $I \neq [m]$, then there is some $l_0$, such that $E_{l_0}$ contains all of $[m] \setminus I$. 
\end{enumerate}
If all of the above conditions are satisfied, let $S$ be the set of $k$ for which there is some $l$ such that $n_{O,l}=k$. Also, let $F_k = \{i: i \in E_l, n_{O,l}=k\}$ be the set of all indices $i$ such that $i$ is in a partition $E_l$ with $n_{O,l}=k$. Then, $$P(\sigma,\{E_l\},\{b_i\},\{n_{O,l}\},p)=\prod_{k \in S} N_k,$$ where for $k \neq 1$, $$N_k=\left(\frac{k}{\denom(\beta)}\right)^{\# F_k} \frac{P(f_{\base},k/\denom(\beta),p)!}{(P(f_{\base},k/\denom(\beta),p)-\# \{l,n_{O,l}=k\})!},$$ and $$N_1=\begin{cases}
1 & \denom(\beta) \neq 1 \\
\frac{p^{f_{\base}}!}{(p^{f_{\base}}-\{l,n_{O,l}=1\})!} & \denom(\beta) = 1, I = [m] \\
\frac{(p^{f_{\base}}-1)!}{((p^{f_{\base}}-1)-(\{l,n_{O,l}=1\}-1))!} & \denom(\beta) = 1, I \neq [m]
\end{cases}.$$
\end{theorem}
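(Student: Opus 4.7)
The plan is to formalize the reduction sketched in Section~\ref{sec:counting_choices} and then execute it to its conclusion. First I would establish the four vanishing conditions that force $P = 0$. The first two are necessary consequences of basic local field theory: for $a_{i,b_i} \neq 0$ with $b_i/e_{i,\rel} = \beta$, the element $a_{i,b_i}\pi_{L_{j_i}}^{b_i}$ is of the form $\zeta \pi_K^{\beta}$ for some root of unity $\zeta$, whose generated extension $K[\zeta \pi_K^{\beta}]/K$ has ramification index exactly $\denom(\beta)$. Hence its Galois orbit size must be divisible by $\denom(\beta)$, giving condition (1); and the orbit size divides $[L_{j_i}:K] = e_{i,\rel}f_{i,\rel}$, which combined with the $\denom(\beta)$-divisibility and $\denom(\beta) \mid e_{i,\rel}$ yields $\frac{n_{O,l}}{\denom(\beta)} \mid f_{i,\rel}$, giving condition (2). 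Condition (3) is immediate from the fact that the only orbit of size $1$ when $\denom(\beta) \neq 1$ is $\{0\}$. Condition (4) is built into the definition of $P$ via clause (3).

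Next, I would establish the factorization $P(\sigma,\{E_l\},\{b_i\},\{n_{O,l}\},p) = \prod_{k \in S} P_k$. The defining conditions couple indices $i_1,i_2$ only through belonging to a common $E_l$, and since any common $E_l$ forces $n_O(a_{i_1,b_{i_1}}\pi_{L_{j_{i_1}}}^{b_{i_1}}) = n_O(a_{i_2,b_{i_2}}\pi_{L_{j_{i_2}}}^{b_{i_2}})$, the $F_k$'s determine independent subproblems. Thus counting reduces to computing each $P_k$, which by definition is $P((e_i^{f_i})_{i \in F_k}, \{E_l\}_{n_{O,l}=k}, \{b_i\}_{i \in F_k}, \{k\}, p)$.

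For the core computation with $k \neq 1$, I would invoke Lemmas \ref{lem:nO_simple_case} and \ref{lem:same_gal_conj_count}. The first identifies $P(f_{\base}, k/\denom(\beta), p)$ as the number of Galois orbits of size $k$ among elements of the form $\zeta \pi_K^{\beta}$ (this is the standard inclusion–exclusion over subfields $\mathbb{F}_{p^{f_{\base} k/(\denom(\beta)\ell)}}$, counting elements of $\mathbb{F}_{p^{f_{\base} k/\denom(\beta)}}$ that generate it over $\mathbb{F}_{p^{f_{\base}}}$, then dividing by the orbit size). The second says that for each index $i$ and each fixed orbit $\mathcal{O}$ of size $k$, the number of pairs $(a_{i,b_i}, j_i) \in T_{f_i,p} \times [\gcd(p^{f_i}-1, e_{i,\rel})]$ with $a_{i,b_i}\pi_{L_{j_i}}^{b_i} \in \mathcal{O}$ is exactly $\gcd(p^{f_i}-1, e_{i,\rel}) \cdot k/\denom(\beta)$, independent of $\mathcal{O}$ and of $i \in F_k$. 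Combining these, I would build a valid configuration by (a) choosing an injective assignment of partition classes $\{l : n_{O,l}=k\}$ to orbits, contributing the falling factorial $P(f_{\base}, k/\denom(\beta), p)!/(P(f_{\base}, k/\denom(\beta), p) - \#\{l : n_{O,l}=k\})!$; and (b) for each $i \in F_k$, choosing $(a_{i,b_i}, j_i)$ landing in its part's orbit, contributing $(k/\denom(\beta))^{\# F_k} \prod_{i \in F_k} \gcd(p^{f_i}-1, e_{i,\rel})$. After dividing by the normalizing factor $\prod_{i \in F_k} \gcd(p^{f_i}-1, e_{i,\rel})$, the formula for $N_k$ appears.

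For $k = 1$, I would split into cases. If $\denom(\beta) \neq 1$, the only element giving trivial orbit is $0$, so any part $E_l$ with $n_{O,l}=1$ must be the one from condition (4); the $a_{i,b_i}$ are forced to be $0$ on $I \cap E_{l_0}$ and the $j_i$ are free, yielding $N_1 = 1$. If $\denom(\beta) = 1$, then $a_{i,b_i}\pi_{L_{j_i}}^{b_i}$ lies in $K$ iff $a_{i,b_i} \cdot \zeta_{p^{f_i}-1}^{j_i b_i} \in T_{f_{\base},p}$, and the different parts $\{l : n_{O,l}=1\}$ correspond to distinct such Teichmüller representatives, producing the falling factorials over $p^{f_{\base}}$ (or $p^{f_{\base}} - 1$ when $I \neq [m]$, where one slot is reserved for $0$). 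The main obstacle is Lemma \ref{lem:same_gal_conj_count}: one must track the action of $\Gal(\bar K/K)$ on $a_{i,b_i}\pi_{L_{j_i}}^{b_i} = a_{i,b_i}\zeta_{e_{i,\rel}(p^{f_i}-1)}^{j_i b_i}\pi_K^{\beta}$ and verify that, as $(a_{i,b_i}, j_i)$ varies in its parametrizing set, the orbit-preimage count is uniform. This \emph{equidistribution} mirrors Lemma \ref{lem:temp133} and follows by the same cyclotomic divisibility analysis; once it is in hand, the remaining combinatorics is routine bookkeeping.
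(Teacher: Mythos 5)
Your proposal takes essentially the same route as the paper: establish the four vanishing conditions from local field theory, factorize $P = \prod_k P_k$ by independence across orbit sizes $k$, then for each $k \neq 1$ compute $P_k$ by first choosing orbits (the falling factorial, via Lemma~\ref{lem:nO_simple_case}) and then choosing conjugates within each chosen orbit (the $(k/\denom(\beta))^{\#F_k}$ factor, via Lemma~\ref{lem:same_gal_conj_count}), with the $k=1$ case treated separately by subcases on $\denom(\beta)$ and whether $I=[m]$. One small slip in your derivation of condition (2): the purely arithmetic implication ``$n_{O,l} \mid e_{i,\rel}f_{i,\rel}$, $\denom(\beta) \mid n_{O,l}$, and $\denom(\beta) \mid e_{i,\rel}$ together give $\frac{n_{O,l}}{\denom(\beta)} \mid f_{i,\rel}$'' is false (take $n_{O,l}=4$, $\denom(\beta)=2$, $e_{i,\rel}=4$, $f_{i,\rel}=1$); the correct reason, which is also the paper's, is that $n_{O,l}/\denom(\beta)$ equals the relative inertia degree $f(K[a_{i,b_i}\pi_{L_{j_i}}^{b_i}]/K)$ of the generated subfield, which must divide $f(L_{j_i}/K)=f_{i,\rel}$ for nested extensions.
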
 Note that it seems that we are missing one condition, which was that for all $k$, $\# \{l: n_{O,l}=k\} \leq P(f_{\base},k/\denom(\beta),p)$. But this condition is subsumed by the fact that if this condition is not satisfied, then $$\frac{P(f_{\base},k/\denom(\beta),p)!}{(P(f_{\base},k/\denom(\beta),p)-\# \{l,n_{O,l}=k\})!},$$ \textit{as a polynomial}, are $0$ when evaluated at $p$.

We end this section with the proof of Lemmas \ref{lem:nO_simple_case} and \ref{lem:same_gal_conj_count}. First we have the following short lemma
\begin{lemma}
\label{lem:equidistribution}
Fix $f$ and $e$ such that $p \nmid e$. Then, $x\frac{e}{\gcd(b,e)}+y\frac{b}{\gcd(b,e)} \mod (p^f-1)$ as $x$ ranges in $[p^f-1]$ and as $y$ ranges in $[\gcd(p^f-1,e)]$ is equidistributed among residue classes of $p^f-1$, with each class having $\gcd(p^f-1,e)$ elements.
\end{lemma}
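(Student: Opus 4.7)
The plan is to view the statement as a counting problem for solutions of a linear congruence. Set $N = p^f - 1$, $g = \gcd(b,e)$, $E = e/g$, and $B = b/g$, so that $\gcd(B,E) = 1$ and the expression in question becomes $xE + yB \pmod{N}$. The total number of pairs $(x,y)$ is $N \cdot \gcd(N,e)$, so equidistribution among the $N$ residue classes is equivalent to showing that for every $c \in \mathbb{Z}/N$, the number of pairs with $xE + yB \equiv c \pmod N$ equals $\gcd(N,e)$.

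First I would fix $y$ and count the $x$. The congruence $xE \equiv c - yB \pmod N$ has exactly $d_1 := \gcd(E,N)$ solutions in $x \in [N]$ when $d_1 \mid (c - yB)$, and zero solutions otherwise. Next, count the admissible $y \in [\gcd(N,e)]$. Since $\gcd(B,E) = 1$, we have $\gcd(B, d_1) = 1$, so $B$ is a unit modulo $d_1$, and the congruence $yB \equiv c \pmod{d_1}$ cuts out a single residue class mod $d_1$. The key divisibility $d_1 = \gcd(e/g, N) \mid \gcd(e,N) = d_2$ (which holds because $e/g \mid e$) then guarantees that this residue class contains exactly $d_2/d_1$ values of $y$ in $[d_2]$. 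Multiplying, the total count for each $c$ is $(d_2/d_1) \cdot d_1 = d_2 = \gcd(N, e)$, as required.

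The hypothesis $p \nmid e$ is used implicitly only in ensuring we are working in a clean setting; the argument is purely about abelian groups and the coprimality $\gcd(B,E) = 1$ that follows from dividing out $g$. The only step that needs any real care is the verification $d_1 \mid d_2$ and the consequent assertion that a single residue class mod $d_1$ meets $[d_2]$ in exactly $d_2/d_1$ elements — this is the main (and only) obstacle, and it is essentially immediate once the right divisors are identified. No other machinery is required.
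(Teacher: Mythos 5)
Your proof is correct and is essentially a fully fleshed-out version of the paper's own (extremely terse) argument: the paper's entire proof is the observation that $\gcd(p^f-1, e/\gcd(b,e)) \mid \gcd(p^f-1, e)$, which is exactly your $d_1 \mid d_2$. You have simply supplied the standard counting surrounding that divisibility (number of solutions to a linear congruence, then counting admissible $y$ in a single residue class mod $d_1$ inside $[d_2]$), so the approach is identical.
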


\begin{proof}

This comes down to seeing that $\gcd(p^f-1,\frac{e}{\gcd(b,e)})|\gcd(p^f-1,e)$.


\end{proof}

\begin{proof} [Proof of Lemma \ref{lem:nO_simple_case}]
Recall $\pi_{L_{j_1}}=(\zeta_{p^{f_1}-1}^{j_1} \pi_K)^{1/e_{1,\rel}}$. Also, write $a_{1,b_1} = \zeta_{p^{f_1}-1}^{s}$. Then, $n_O(a_{1,b_1}\pi_{L_{j_1}}^{b_1})$ is the number of Galois conjugates of $ \zeta_{p^{f_1}-1}^{s}(\zeta_{p^{f_1}-1}^{j b_1 /e_{1,\rel}} \pi_K)^{b_1 /e_{1,\rel}}$. The set of these conjugates is precisely $$\{(\zeta_{p^{f_1}-1}^{s}\zeta_{p^{f_1}-1}^{jb_1 /e_{1,\rel}})^{p^{f_{\base}r_1}} \zeta_{e_{1,\rel}}^{b_1r_2}\pi_K^{b_1 /e_{1,\rel}}:r_1 \in \bbZ/(f_1/f_{\base})\bbZ, r_2 \in \bbZ/(e_{1,\rel})\bbZ\}.$$ So, we wish to find the number of distinct elements of this set. We do this by turning to modular arithmetic by looking at the exponents. That is, we have a bijection from $$\{p^{f_{\base}r_1}(se_{1,\rel}+jb_1) +b_1r_2(p^{f_1}-1) \mod (p^{f_1}-1)e_{1,\rel}:r_1 \in \bbZ/(f_1/f_{\base})\bbZ, r_2 \in \bbZ/(e_{1,\rel})\bbZ\}$$ to $$\{(\zeta_{p^{f_1}-1}^{s}\zeta_{p^{f_1}-1}^{jb_1 /e_{1,\rel}})^{p^{f_{\base}r_1}} \zeta_{e_{1,\rel}}^{b_1r_2}\pi_K^{b_1 /e_{1,\rel}}:r_1 \in \bbZ/(f_1/f_{\base})\bbZ, r_2 \in \bbZ/(e_{1,\rel})\bbZ\}$$ sending $x \mod (p^{f_1}-1)e_{1,\rel}$ to $\zeta_{(p^{f_1}-1)e_{1,\rel}}^x \pi_K^{b_1/e_{1,\rel}}$. So, equivalently, we wish to find the number of distinct elements of $$\{p^{f_{\base}r_1}(se_{1,\rel}+jb_1) +b_1r_2(p^{f_1}-1) \mod (p^{f_1}-1)e_{1,\rel}:r_1 \in \bbZ/(f_1/f_{\base})\bbZ, r_2 \in \bbZ/(e_{1,\rel})\bbZ\}.$$ Taking these elements modulo $b_1(p^{f_1}-1)$, we see that the number of such distinct elements is then equal to $\frac{e_{1,\rel}}{\gcd(b_1,e_{1,\rel})}$ times the number of distinct elements of $$S_{s,j_1}=\{p^{f_{\base}r_1}(se_{1,\rel}+jb_1) \mod (p^{f_1}-1)\gcd(b_1,e_{1,\rel}):r_1 \in \bbZ/(f_1/f_{\base})\bbZ\}.$$ Now the number of distinct elements to the above equation is the smallest $r_1$ such that \begin{equation}
    (p^{r_1}-1)(se_{1,\rel}+j_1b_1) \equiv 0 \mod (p^{f_1}-1)\gcd(b_1,e_{1,\rel}). \label{eqn:temp2}
\end{equation} We will denote the LHS of this congruence to be $E(r_1,s,j_1)$. One can see by minimality of such $r_1$ that $r_1|f_1/f_{\base}$. Thus, so far, what we have is that $$n_O(\zeta_{p^{f_1}-1}^{s}(\zeta_{p^{f_1}-1}^{jb_1/e_{1,\rel}}\pi_K)^{b_1/e_{1,\rel}}) = \frac{e_{1,\rel}}{\gcd(b_1,e_{1,\rel})} \min(\{r_1':r_1', E(r_1',s,j_1) \equiv 0 \mod (p^{f_1}-1)\gcd(b_1,e_{1,\rel})\}).$$ However, what we want to find is the number of choices $(s,j_1)$ that give a certain number of Galois conjugates. So, we want to fix $r_1$, and find the number of choices of $(s,j_1)$ such that $$r_1=\min(\{r_1':r_1',E(r_1',s,j_1) \equiv 0 \mod (p^{f_1}-1)\gcd(b_1,e_{1,\rel})\}).$$ These choices will all give $$\frac{r_1 e_{1,\rel}}{\gcd(b_1,e_{1,\rel})}=r_1\denom(\beta)$$ conjugates.

First, we will find the number of solutions $s,j_1$ to $$E(r_1,s,j_1) \equiv 0 \mod (p^{f_1}-1)\gcd(b_1,e_{1,\rel}),$$ after fixing any $r_1$ such that $r_1|f_1/f_{\base}$. These solutions $(s,j_1)$ are precisely the ones such that $$\min(\{r_1':r_1',E(r_1',s,j_1) \equiv 0 \mod (p^{f_1}-1)\gcd(b_1,e_{1,\rel})\})$$ \textit{divide} $r_1$, which is not exactly what we want, but it will be after applying inclusion exclusion, as we will see. So, for now, we will fix an $r_1|f_1/f_{\base}$ and find the number of solutions $s,j_1$ to Equation \ref{eqn:temp2}. Dividing both sides by $(p^{r_1}-1)\gcd(b_1,e_{1,\rel})$, our equation turns into $$s\frac{e_{1,\rel}}{\gcd(b_1,e_{1,\rel}))}+j_1\frac{b_1}{\gcd(b_1,e_{1,\rel})} \equiv 0 \mod \frac{p^{f_1}-1}{p^{r_1}-1}.$$ Now we apply Lemma \ref{lem:equidistribution}, obtain that there are $$(p^{r_1}-1)\gcd(p^{f_1}-1,e_{1,\rel})$$ solutions. With this, we may apply inclusion exclusion principle, and obtain that the number of choices of $(s,j_1)$ such that $$r_1=\min(\{r_1':r_1',E(r_1',s,j_1) \equiv 0 \mod (p^{f_1}-1)\gcd(b_1,e_{1,\rel})\})$$ is \begin{align*}
    &=\gcd(p^{f_1}-1,e_{1,\rel})\big(p^{r_1}-1-\sum_{q_1|r_1}(p^{f_{\base}\frac{r_1}{q_1}}-1)\\
    &+\sum_{q_1 \neq q_2, q_i | r_1}(p^{f_{\base}\frac{r_1}{q_1q_2}}-1)- \dots \big)\\
    &=\gcd(p^{f_1}-1,e_{1,\rel})r_1P(f_{\base},r_1,p).
\end{align*} In other words, the number of choices of $(s,j_1)$ such that $\zeta_{p^{f_1}-1}^{s}(\zeta_{p^{f_1}-1}^{j b_1 /e_{1,\rel}} \pi_K)^{b_1 /e_{1,\rel}}$ has $k$ Galois conjugates is $\gcd(p^{f_1}-1,e_{1,\rel})\frac{k}{\denom(\beta)}P(f_{\base},\frac{k}{\denom(\beta)},p)$.

\end{proof}

\begin{proof}[proof of Lemma \ref{lem:same_gal_conj_count}]
We again convert the problem into modular arithmetic. Write $a_{i,b_i}=\zeta_{p^{f_i}-1}^{s_i}$ and $a_{2,b_2}=\zeta_{p^{f_2}-1}^{s_2}$, where $s_i \in \bbZ/(p^{f_i}-1)\bbZ$. Then, as in the proof of Lemma \ref{lem:nO_simple_case}, we had that $$\{p^{f_{\base}r_1}(se_{1,\rel}+jb_1) +b_1r_2(p^{f_1}-1) \mod (p^{f_1}-1)e_{1,\rel}:r_1 \in \bbZ/(f_1/f_{\base})\bbZ, r_2 \in \bbZ/(e_{1,\rel})\bbZ\}$$ corresponds to the Galois conjugates of $a_{1,b_1}\pi_{L_{j_1}}^{b_1}$, via the map that sends $x \mod (p^{f_1}-1)e_{1,\rel}$ to $\zeta_{(p^{f_1}-1)e_{1,\rel}}^x \pi_K^{b_1/e_{1,\rel}}$. Since we need to consider $a_{2,b_2} \pi_{L_{j_2}}^{b_2}$, we need to actually express our conjugates in terms of roots of unity with order $(p^{f_1}-1)e_{1,\rel}(p^{f_2}-1)e_{2,\rel}$. Thus, for the sake of this proof, the Galois conjugates of $a_{1,b_1}\pi_{L_{j_1}}^{b_1}$ will correspond to $$\{(p^{f_2}-1)e_{2,\rel}(p^{f_{\base}r_1}(se_{1,\rel}+jb_1) +b_1r_2(p^{f_1}-1)) \mod (p^{f_1}-1)e_{1,\rel}(p^{f_2}-1)e_{2,\rel}:$$$$r_1 \in \bbZ/(f_1/f_{\base})\bbZ, r_2 \in \bbZ/e_{1,\rel}\bbZ\}.$$ Similarly, elements $a_{2,b_2}\pi_{L_{j_2}}^{b_2}$ will correspond to $$(p^{f_1}-1)e_{1,\rel}(s_2e_{2,\rel}+j_2b_2) \mod (p^{f_1}-1)e_{1,\rel}(p^{f_2}-1)e_{2,\rel}.$$ Thus, in terms of modular arithmetic, we must find $$(s_2,j_2)\in \bbZ/(p^{f_2}-1)\bbZ \times [\gcd(p^{f_2}-1,e_{2,\rel})]$$ so that there is $r_1,r_2$ such that $$(p^{f_2}-1)e_{2,\rel}(p^{f_{\base}r_1}(se_{1,\rel}+jb_1) +b_1r_2(p^{f_1}-1)) \equiv (p^{f_1}-1)e_{1,\rel}(s_2e_{2,\rel}+j_2b_2) $$$$ \mod (p^{f_1}-1)e_{1,\rel}(p^{f_2}-1)e_{2,\rel}.$$ Fixing $r_1$, we have a solution $r_2$ to the above equation if and only if $$p^{r_1 f_{\base}}s_1e_{1,\rel}(p^{f_2}-1)e_{2,\rel}+p^{r_1f_{\base}}j_1b_1(p^{f_2}-1)e_{2,\rel} $$$$ \equiv s_2e_{2,\rel}(p^{f_1}-1)e_{1,\rel}+j_2b_2(p^{f_1}-1)e_{1,\rel} \mod (p^{f_1}-1)\gcd(b_1,e_{1,\rel})(p^{f_2}-1)e_{2,\rel}.$$ Thus, this is the equation that we need to solve. Now by Equation \ref{eqn:temp2}, we have $$\frac{(p^{f_1}-1)\gcd(b_1,e_{1,\rel})}{p^{f_{\base}k}-1}|(s_1e_{1,\rel}+j_1b_1).$$ Thus, we may write $s_1e_{1,\rel}+j_1b_1=y\frac{(p^{f_1}-1)\gcd(b_1,e_{1,\rel})}{p^{f_{\base}k}-1}$ for some integer $y$. Thus, our congruence becomes $$\frac{yp^{r_1 f_{\base}}(p^{f_2}-1)e_{2,\rel}}{p^{f_{\base}k}-1} \equiv s_2e_{2,\rel}\frac{e_{1,\rel}}{\gcd(b_1,e_{1,\rel})}+j_2b_2\frac{e_{1,\rel}}{\gcd(b_1,e_{1,\rel})} \mod (p^{f_2}-1)e_{2,\rel}.$$ Now as we had in the statement of the lemma, $\frac{e_{1,\rel}}{\gcd(b_1,e_{1,\rel})}=\frac{e_{2,\rel}}{\gcd(b_2,e_{2,\rel})}$. Thus, we may divide by $e_{2,\rel}$ on both sides and obtain $$\frac{yp^{r_1 f_{\base}}(p^{f_2}-1)}{p^{f_{\base}k}-1} \equiv s_2\frac{e_{2,\rel}}{\gcd(b_2,e_{2,\rel})}+j_2\frac{b_2}{\gcd(b_2,e_{2,\rel})} \mod (p^{f_2}-1).$$ Now, by Lemma \ref{lem:equidistribution}, for each choice of $r_1$, there are $\gcd(p^{f_2}-1,e_{2,\rel})$ choices of $s_2,j_2$ satisfying the above congruence. Thus, these choices $s_2,j_2$ are precisely the ones such that $\zeta_{p^{f_2}-1}^{s_2}\pi_{L_{j_2}}^{b_2}$ is a Galois conjugate of $\zeta_{p^{f_1}-1}^{s_1}\pi_{L_{j_1}}^{b_1}$. Furthermore, for $r_1 \in [\frac{k}{\denom(\beta)}]$, we get different elements on the LHS. Otherwise, we would have that $$(p^{r_1'f_{\base}}-1)(s_2\frac{e_{2,\rel}}{\gcd(b_2,e_{2,\rel})}+j_2\frac{b_2}{\gcd(b_2,e_{2,\rel})}) \equiv 0 \mod p^{f_2}-1,$$ for some $r_1' < r_1$, which would imply that our choice of $s_2, j_2$ gives us that $\zeta_{p^{f_2}-1}^{s_2}\pi_{L_{j_2}}^{b_2}$ has less than $k=r_1 \denom(\beta)$ Galois conjugates, which would be a contradiction to the fact that these are Galois conjugates of $\zeta_{p^{f_1}-1}^{s_1}\pi_{L_{j_1}}^{b_1}$. Thus, we get a total of $$\frac{k}{\denom(\beta)}\gcd(p^{f_2}-1,e_{2,\rel})$$ choices of $s_2,j_2$
\end{proof}

\section{Large $q$ limit of $\rho$}

In this section, we study the large $q$ limit of $\rho(\sigma,e,f;p)$ and prove Theorem \ref{thm:large_q_limit_rho}. By large $q$ limit, we mean asymptotics as $p$ or $f$ (or both) approaches infinity. For the rest of this section, we will use absolute notation, as we did for section \ref{sec:generating_function} and so in particular we have that $e_{\base}|e_i$ and $f_{\base}|f_i$ for all $e_i,f_i$ in $\sigma$. 

We begin with a formula for $\rho$ in terms of $G$. Combining Equation \ref{eqn:rho} and Lemma \ref{lem:cond_disc_order}, we may rewrite $\rho$ as $$\rho(\sigma,e_{\base},f_{\base};p)=\frac{w(d)\sum_{A \subset [m]}N_{1,A}N_{2,A}}{\perm(\sigma)q^{\sum_{i=1}^m (e_{i,\rel}-1)f_{i,\rel}/2}\prod_{i=1}^m f_{i,\rel}},$$ where $$N_{1,A}=\frac{\sum_{\substack{L_A = \prod_{i \in A} L_{j_i}\\ j_i \in [\gcd(q^{f_{i,\rel}}-1,e_{i,\rel})]}}\int_{\alpha \in \cO_{L_A}} q^{-\frac{1}{2}v_K(\Delta_{P_{\alpha/K}})} d\lambda_{L_A}}{\prod_{i \in A}\gcd(q^{f_{i,\rel}}-1,e_{i,\rel})}$$ and $$N_{2,A}=\frac{\sum_{\substack{L_{A^c} = \prod_{i \notin A} L_{j_i}\\ j_i \in [\gcd(q^{f_{i,\rel}}-1,e_{i,\rel})]}}\int_{\alpha \in m_{L_{A^c}}} q^{-\frac{1}{2}v_K(\Delta_{P_{\alpha/K}})} d\lambda_{L_{A^c}}}{\prod_{i \notin A} \gcd(q^{f_{i,\rel}}-1,e_{i,\rel})}.$$ Here $q=p^{f_{\base}}$ is the size of the residue field of $K$. We recognize that $N_{1,A}$ is just $G(\sigma_A,e_{\base},f_{\base},\{0\}_{i \in A},p)(q^{-e_{\base}/2})$, where $\sigma_A = (e_i^{f_i})_{i \in A}$, and similarly for $N_{2,A}$. Writing $$G(A,0)(q,t)=G(\sigma_A,e_{\base},f_{\base},\{0\}_{i \in A},p)(t)$$ and $$G(A,1)(q,t)=G(\sigma_A,e_{\base},f_{\base},\{1\}_{i \in A},p)(t),$$ where $q=p^{f_{\base}}$, we obtain the formula \begin{equation} \label{eqn:rho2}
    \rho(\sigma,e_{\base},f_{\base};p)=\frac{w(d)\sum_{A \subset [m]}G(A,0)(q^{-e_{\base}/2})G(A^c,1)(q^{-e_{\base}/2})}{\perm(\sigma)q^{\sum_{i=1}^m (e_{i,\rel}-1)f_{i,\rel}/2}\prod_{i=1}^m f_{i,\rel}},
\end{equation} which we may use to compute $\rho$ from $G$ (here we define $G(A,1)=1$ if $A$ is empty.

Thus, if suffices to compute asymptotics of $G$. Let $L/K$ be an \'etale extension with the appropriate splitting type. Our approach revolves around the following observation: Most Galois conjugates of elements in $\cO_L$ try to be as far as possible from one another. In other words, most elements $\alpha$ in $\cO_L$ have the least possible $v_p(\Delta_{P_{\alpha}/K})$. Thus, what we will show is: \begin{enumerate}
    \item Find the smallest possible $v_p(\Delta_{P_{\alpha}/K})$ in $\cO_L$.
    \item Show that as $q$ tends to infinity, the probability that an element $\alpha \in \cO_L$ attains this minimal valuation approaches $1$.
\end{enumerate} Now recall our definition of $G(\sigma,e_{\base},f_{\base},\{b_i\})$ in terms of coefficients $a(\sigma,e_{\base},f_{\base},c,\{b_i\}_{i=1}^m,p)$. The above list translates to \begin{enumerate}
    \item Find the smallest $c$ such that $a(\sigma,e_{\base},f_{\base},c,\{b_i\}_{i=1}^m,p)$ is nonzero. Call this $c_0$.
    \item Show $\lim_{q \to \infty} a(\sigma,e_{\base},f_{\base},c_0,\{b_i\}_{i=1}^m,p)=1$.
\end{enumerate} The combination of these two facts then gives us large $q$ limit asymptotics for $$G(\sigma,e_{\base},f_{\base},\{0\}_{i=1}^m,p)(q^{-e_{\base}/2}).$$ We will then give a bound on the size of $$G(\sigma,e_{\base},f_{\base},\{1\}_{i=1}^m,p)(q^{-e_{\base}/2}).$$ It turns out that these facts will suffice to calculate the asymptotics of $\rho$.

Now we start with our first step. We need to find the minimal $c$ so that $a(\sigma,e_{\base},f_{\base},c,\{0\}_{i=1}^m,p)$ is nonzero. Fix a base field $K$ with $e(K/\bbQ_p)=e_{\base}$ and $f(K/\bbQ_p)=f_{\base}$. Then, $a(\sigma,e_{\base},f_{\base},c,\{0\}_{i=1}^m,p)$ is nonzero if and only if there is some element $(\alpha_i)$ generating an extension of $K$ with splitting type $\sigma$ such that $v_p(\Delta_{P_{(\alpha_i)}/K})=c$. 

\begin{lemma}\label{lem:smallest_disc_element}
The smallest $c$ such that there is some element $(\alpha_i)$ generating an extension of $K$ with splitting type $\sigma$ such that $v_p(\Delta_{P_{(\alpha_i)}/K})=c$ is $$c_0:=\sum_{i=1}^m\frac{1}{e_{\base}}f_{i,\rel}(e_{i,\rel}-1).$$ Moreover, the elements $(\alpha_i)$ that achieve this are precisely the ones that satisfy
\begin{enumerate}
    \item For all $i$, the first term in the Teichm\"uller expansion of $\alpha_i$ are primitive $p^{f_i}-1$ roots of unity, and these roots of unity belong in different Galois orbits over $K$ as $i$ varies.
    \item For all $i$, the second term in the Teichm\"uller expansion of $\alpha_i$ is nonzero.
\end{enumerate}
\end{lemma}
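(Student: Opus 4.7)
The plan is to decompose $v_p(\Delta_{P_{(\alpha_i)/K}})$ into pieces indexed by the factors $L_i$ plus cross-resultants, lower-bound each piece, and pin down when equality holds. Since $(\alpha_i)$ generates $L$ with splitting type $\sigma$, the minimal polynomial factors as $P_{(\alpha_i)/K}=\prod_i P_{\alpha_i/K}$ into pairwise coprime polynomials. Iterating the identity $\Delta_{PQ}=\Delta_P\Delta_Q\Res(P,Q)^2$ gives
\[
v_p(\Delta_{P_{(\alpha_i)/K}}) \;=\; \sum_{i=1}^m v_p(\Delta_{P_{\alpha_i/K}}) \;+\; 2\sum_{i<j} v_p(\Res(P_{\alpha_i/K},P_{\alpha_j/K})).
\]
By Lemma~\ref{lem:cond_disc_order}, $v_p(\Delta_{P_{\alpha_i/K}})\geq v_p(\Delta_{L_i/K})$ with equality iff $\cO_K[\alpha_i]=\cO_{L_i}$, and the standard tame-different formula yields $v_p(\Delta_{L_i/K})=f_{i,\rel}(e_{i,\rel}-1)/e_{\base}$. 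Each resultant is a product of Galois-conjugate differences in $\cO_{\bar K}$, hence has nonnegative $v_p$, and vanishes iff $v_p(\alpha_{i,k}-\alpha_{j,l})=0$ for every pair of conjugates with $i\neq j$. Summing, $v_p(\Delta_{P_{(\alpha_i)/K}})\geq c_0$, with equality iff (A)~$\cO_K[\alpha_i]=\cO_{L_i}$ for every $i$ and (B)~conjugates of $\alpha_i$ and $\alpha_j$ have distinct residues in $\kappa_{\bar K}$ for $i\neq j$.

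It remains to translate (A) and (B) into conditions on the Teichm\"uller expansion $\alpha_i=\sum_{n\geq 0}a_{i,n}\pi_{L_i}^n$. For (B): the residue of a Galois conjugate of $\alpha_i$ is a Frobenius conjugate of $\overline{a_{i,0}}$, so (B) is the statement that the Frobenius orbits of the leading Teichm\"uller coefficients $a_{i,0}$ are pairwise disjoint over $K$. Combined with the requirement that $\alpha_i$ generates $L_i/K$, this forces $a_{i,0}$ to be a primitive $p^{f_i}-1$ root of unity whose orbits are disjoint across $i$, which is condition (1). For (A): Hensel's lemma applied to $x^{p^{f_i}-1}-1$ at $\alpha_i$---valid because $\overline{\alpha_i}$ is a unit and the derivative is nonzero mod $\fkm$---places the Teichm\"uller lift $a_{i,0}$ of $\overline{\alpha_i}$ inside the $p$-adically complete subring $\cO_K[\alpha_i]$, so $\alpha_i - a_{i,0}=a_{i,1}\pi_{L_i}+\cdots$ is in $\cO_K[\alpha_i]$ as well; this element has $L_i$-valuation one iff $a_{i,1}\neq 0$, and combined with (1) this forces $\cO_K[\alpha_i]=\cO_{L_i}$, giving condition (2). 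Conversely if either (1) or (2) fails then $\cO_K[\alpha_i]$ misses either the full residue field or a uniformizer of $L_i$, so $[\cO_{L_i}:\cO_K[\alpha_i]]>1$ and equality in the bound fails.

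I expect the main obstacle to be the Teichm\"uller characterization of $\cO_K[\alpha_i]=\cO_{L_i}$---chiefly the Hensel argument extracting $a_{i,0}$ from $\alpha_i$ and verifying that $\alpha_i-a_{i,0}$ generates the ramified part; the rest reduces to the algebra of discriminants and resultants together with the standard tame different computation.
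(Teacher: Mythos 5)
Your proof is correct and takes a genuinely different route from the paper. Where the paper re-derives the valuation bounds from scratch by invoking its own Equation~\eqref{eqn:disc_recursion} — splitting the product $\prod(\alpha_{i,k}-\alpha_{i',k'})$ by whether the lowest-order Teichm\"uller terms $\gamma_{i,k}$ fall in the same Galois orbit, and then running the crude pairwise estimate $v_p(\alpha_{i,k}-\alpha_{i',k'}) \geq \min(1/e_i, 1/e_{i'})$ — you instead factor at the level of polynomials: $\Delta_{P_{(\alpha_i)/K}}=\prod_i\Delta_{P_{\alpha_i/K}}\cdot\prod_{i<j}\Res(P_{\alpha_i/K},P_{\alpha_j/K})^2$, bound the single-factor discriminants via Lemma~\ref{lem:cond_disc_order} together with the standard tame different $\mathfrak d_{L_i/K}=\fkm_{L_i}^{e_{i,\rel}-1}$, and bound the resultants trivially by $0$. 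This is cleaner and more modular: the minimal value $\sum_i f_{i,\rel}(e_{i,\rel}-1)/e_{\base}$ appears immediately as $\sum_i v_p(\Delta_{L_i/K})$, the equality condition on each factor becomes the crisp statement $\cO_K[\alpha_i]=\cO_{L_i}$, and the cross-term equality becomes the residue-disjointness condition. The paper's route, by contrast, is self-contained within its own recursive framework (Proposition~\ref{prop:induction_a_with_P}) and avoids invoking the different, but it obscures the conceptual identity $c_0=v_p(\Delta_L)$.

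One small point worth flagging (which your proposal inherits from the lemma's statement, and which the paper's own proof shares): the translation of $\cO_K[\alpha_i]=\cO_{L_i}$ into Teichm\"uller conditions is slightly stronger than what is actually needed or implied. The condition $\cO_K[\alpha_i]=\cO_{L_i}$ forces $a_{i,0}$ to generate the residue extension $\kappa_{L_i}/\kappa_K$, which is weaker than being a \emph{primitive} $(p^{f_i}-1)$-th root of unity; and for unramified factors ($e_{i,\rel}=1$), the requirement $a_{i,1}\neq 0$ is not necessary (e.g.\ $\alpha_i=\zeta_{p^{f_i}-1}$ achieves equality with $a_{i,1}=0$). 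Neither affects the lower bound $c_0$, but if you wish the converse direction of your last paragraph to be airtight you should qualify (1) as ``generates the residue field extension'' and (2) as applying only when $e_{i,\rel}>1$.
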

\begin{proof}


We approach this problem by essentially following the recursive steps as in Proposition \ref{prop:induction_a_with_P}. We will prove lower bounds on the valuation of discriminant of a minimal polynomial generating an extension with appropriate splitting type, and see along the way that these bounds are achieved.

We have that (here we use the same notation as Section \ref{sec:generating_function}) $$\Delta_{P_{(\alpha_i)}/K} = \prod_{(i,k) \neq (i',k'), k,k' \in [d_i]} (\alpha_{i,k}-\alpha_{i',k'}),$$ where $\alpha_{i,k}$ are Galois conjugates of $\alpha_i$ over $K$. So, we wish to maximize distances between Galois conjugates. Now we use equation \ref{eqn:disc_recursion} (and its associated notations) and try to minimize distances in each of the terms. So, we have two cases: $(i,k) \sim (i',k')$ and $(i,k) \not \sim (i',k')$. In the first case, where $(i,k) \sim (i',k')$, we have the trivial bound that $$v_p(\alpha_{i,k}-\alpha_{i',k'}) \geq 0.$$ In the second case, where $(i,k) \sim (i',k')$, we have that $\alpha_{i,k}$ and $\alpha_{i',k'}$ share a common lowest order term $\gamma$ (in terms of the Teichm\"uller expansion). Thus, $$v_p(\alpha_{i,k}-\alpha_{i',k'}) = v_p((\alpha_{i,k}-\gamma)-(\alpha_{i',k'}-\gamma)) \geq \min(v_p(\alpha_{i,k}-\gamma_{i,k}),v_p(\alpha_{i',k'}-\gamma_{i,k}))$$ by definition of a valuation. Now $\alpha_{i,k}-\gamma$ and $\alpha_{i',k'}-\gamma$ no longer have valuation $0$, and since they belong in extensions with ramification indices $e_i$ and $e_{i'}$, respectively, we have that $v_p(\alpha_{i,k}-\gamma) \geq \frac{1}{e_i}$ and $v_p(\alpha_{i',k'}-\gamma) \geq \frac{1}{e_{i'}}$. Thus, we obtain the lower bound that $$v_p(\alpha_{i,k}-\alpha_{i',k'}) \geq \min(\frac{1}{e_i},\frac{1}{e_{i'}}).$$ We may compactly write these bounds as $$v_p(\alpha_{i,k}-\alpha_{i',k'}) \geq \begin{cases}
0 & \alpha_{i,k},\alpha_{i',k'}\text{ have different lowest order term Galois orbits} \\
\min(\frac{1}{e_i},\frac{1}{e_{i'}}) & \text{ otherwise}
\end{cases}$$



With these lower bounds, we see that in order to minimize the valuation of the discriminant, we would like every pair $(i,k) \neq (i',k')$ to satisfy that $v_p(\alpha_{i,k}-\alpha_{i',k'})=0$. In other words, we want $\alpha_{i,k}$ and $\alpha_{i',k'}$ to be have valuation $0$ and also have different lowest order terms in the Teichm\"uller expansion. This is certainly possible if $i \neq i'$ (granted $q$ is large enough). Indeed for any choice $(\alpha_i)$ generating an extension $\prod_{i=1}^m L_i$ with the appropriate splitting, we can adjust the valuation-$0$ terms of the Teichm\"uller expansion of $\alpha_i$ in terms of $\pi_{L_i}$ in order to make sure that $v_p(\alpha_{i,k}-\alpha_{i',k'})=0$ for $i \neq i'$ (we can do this by choosing appropriate roots of unity belonging to different Galois orbits over $K$). Thus, the problem reduces down to the case of field extensions. That is, given two numbers $e_{abs}$ and $f_{abs}$, we ask for the smallest valuation possible of $\Delta_{P_{\alpha}/K}$, given that $\alpha$ generates an extension $L/K$ of ramification index $e(L/\bbQ_p)=e_{abs}$ and inertia degree $f(L/\bbQ_p)=f_{abs}$.

We now focus on the field case. Fix an extension $L/K$ with $e(L/\bbQ_p)=e_{abs}$ and $f(L/\bbQ_p)=f_{abs}$. Define $e_{\rel}=e_{abs}/e_{\base}$ and $f_{\rel}=f_{abs}/f_{\base}$. Take any $\alpha \in L$. Write it as $a_0 + \delta$, where $a_0\in T_{f_{abs}}$ and $\delta \in \fkm_L$. That is, we are extracting the first coefficient of its Teichm\"uller expansion. Now let $n_O(a_0)$ be the number of conjugates of $a_0$, and so $n_O(a_0)|f_{\rel}$. Then, by Equation \ref{eqn:disc_recursion} and the bound above, we have that $$v_p(\Delta_{\alpha}) \geq \frac{1}{e_{abs}} n_O(a_0) (d/n_O(a_0))(d/n_O(a_0)-1)$$ (here $d=e_{\rel}f_{\rel}$). Thus, in order to minimize $v_p(\Delta_{\alpha})$, we must choose $a_0$ so that $n_O(a_0)$ is as large as possible, and so we should take $a_0$ to be a primitive root of unity of order $q^{f_{\rel}}-1$, in which case $n_O(a_0)=f_{\rel}$. With this choice, we have that $$v_p(\Delta_{\alpha}) \geq \frac{1}{e_{abs}}f_{\rel}e_{\rel}(e_{\rel}-1).$$ Now $\delta$ generates $L$ over $K[a_0]$ as a totally ramified extension with relative ramification index $e_{\rel}$. Denote $\{\delta_i\}_{i=1}^{e_{\rel}}$ to be the Galois conjugates of $\delta$. Then, the lower bound $\frac{1}{e_{abs}}f_{\rel}e_{\rel}(e_{\rel}-1)$ will only be achieved if $v_p(\delta_i-\delta_j) = \frac{1}{e_{abs}}$, for all $i \neq j$. This happens if and only if $\delta_i \notin \fkm_L^2$, which gives us the second condition.
\end{proof}


Thus, at this point, we know that $G(\sigma_A,e_{\base},f_{\base},\{0\}_{i \in A})(t)$ is of the form $$a(\sigma,e_{\base},f_{\base},c_0,\{0\})t^{c_0}+H.O.T.,$$ We want to find large $q$ limit when we evaluate $G$ at $t=q^{-e_{\base}/2}$. Now, if we evaluate $G$ at $t=1$, we obtain the sum of the coefficients $a(\sigma,e_{\base},f_{\base},c,\{0\}_{i=1}^m,p)$, which by definition is $$\frac{1}{\prod_{i \in [m]} \gcd(p^{f_i}-1,e_{i,\rel})}\sum_{j_i \in [\gcd(p^{f_i}-1,e_{i,\rel})]} a(\{L_{j_i}\},K,c,\{0\}_{i=1}^m).$$ Now summing this over $c$, we get $$\sum_c\frac{1}{\prod_{i \in [m]} \gcd(p^{f_i}-1,e_{i,\rel})}\sum_{j_i \in [\gcd(p^{f_i}-1,e_{i,\rel})]} a(\{L_{j_i}\},K,c,\{0\}_{i=1}^m)$$$$=\frac{1}{\prod_{i \in [m]} \gcd(p^{f_i}-1,e_{i,\rel})}\sum_{j_i \in [\gcd(p^{f_i}-1,e_{i,\rel})]} \sum_c a(\{L_{j_i}\},K,c,\{0\}_{i=1}^m)$$\begin{equation} \label{eqn:temp1131}
    =\frac{1}{\prod_{i \in [m]} \gcd(p^{f_i}-1,e_{i,\rel})}\sum_{j_i \in [\gcd(p^{f_i}-1,e_{i,\rel})]} 1 = 1.
\end{equation} Thus, if we can show that $\lim_{q \to \infty} a(\sigma,e_{\base},f_{\base},c_0,\{0\}_{i=1}^m,p) = 1$, then the large $q$ limit of $G(\sigma,e_{\base},f_{\base},\{0\}_{i \in A})(q^{-e_{\base}/2})$ is precisely the large $q$ limit of $q^{-c_0e_{\base}/2}$. This is the second step.

Thus, we now aim to show that $\lim_{q \to \infty} a(\sigma,e_{\base},f_{\base},c_0,\{0\}_{i=1}^m,p) = 1$. This purely comes down to following the appropriate recursive branch (the one associated to algebraic numbers over $K$ satisfying the two conditions in Lemma \ref{lem:smallest_disc_element}) in proposition \ref{prop:induction_a_with_P}. First, we get a large $q$ limit asymptotic for $P(\sigma, \{E_l\}, \{b_i\}, \{n_{O,l}\},p)$. We do this by using Theorem \ref{thm:aiji_count}. We also follow the notations in the theorem.

\begin{cor} \label{cor:large_q_limit_P}
$P(\sigma,\{E_l\},\{b_i\},\{n_{O,l}\},p)=0$ unless all of the following are satisfied: \begin{enumerate}
    \item For all $l$ such that $n_{O,l} \neq 1$, $\denom(\beta)|n_{O,l}$.
    \item For all $i,l$ such that $i \in E_l$ and $n_{O,l} \neq 1$, $\frac{n_{O,l}}{\denom(\beta)} | f_{i,\rel}$.
    \item If $\denom(\beta) \neq 1$, then there is at most one $l_0$ so that $n_{O,l_0}=1$.
    \item If $I \neq [m]$, then there is some $l_0$, such that $E_{l_0}$ contains all of $[m] \setminus I$. 
\end{enumerate}
If all of the above conditions are satisfied, let $S$ be the set of $k$ for which there is some $l$ such that $n_{O,l}=k$. Also, let $F_k = \{i: i \in E_l, n_{O,l}=k\}$ be the set of all indices $i$ such that $i$ is in a partition $E_l$ with $n_{O,l}=k$. Then, $$P(\sigma,\{E_l\},\{b_i\},\{n_{O,l}\},p) \sim \prod_{k \in S} N_k',$$ where for $k \neq 1$ $$N_k'=\left(\frac{k}{\denom(\beta)}\right)^{\# F_k} \left(\frac{p^{f_{\base} k/\denom(\beta)}}{k/\denom(\beta)}\right)^{\#\{l,n_{O,l}=k\}}$$ and $$N_1'=\begin{cases}
1 & \denom(\beta) \neq 1 \\
q^{\#\{l,n_{O,l}=1\}} & \denom(\beta) = 1, I = [m] \\
q^{\#\{l,n_{O,l}=1\}-1} & \denom(\beta) = 1, I \neq [m]
\end{cases}$$
\end{cor}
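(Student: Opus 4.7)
The plan is to derive this corollary directly from Theorem \ref{thm:aiji_count}, which gives an exact formula for $P(\sigma,\{E_l\},\{b_i\},\{n_{O,l}\},p)$, by taking the $q \to \infty$ asymptotics of each factor. Since the conditions under which $P$ vanishes are identical in both statements, nothing is to be shown there; only the asymptotic evaluation of the non-vanishing case is required.

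First I would estimate the quantity $P(f_{\base},k',p)$ that appears in the theorem. By its definition
\[
P(f_{\base},k',p)=\frac{1}{k'}\Bigl((p^{f_{\base}k'}-1)-\sum_{\ell \mid k'\text{ prime}}(p^{f_{\base}k'/\ell}-1)+\cdots\Bigr),
\]
the leading term is $p^{f_{\base}k'}/k'$ and every other term is $O(p^{f_{\base}k'/2})$, so $P(f_{\base},k',p)\sim p^{f_{\base}k'}/k'$ as $q=p^{f_{\base}} \to \infty$ (whether $p$ or $f_{\base}$ tends to infinity). Next I would handle the falling factorial: for any fixed positive integer $M$ and any integer sequence $N=N(q)\to\infty$, we have $N!/(N-M)! = N(N-1)\cdots(N-M+1)\sim N^M$. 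Applying this with $N=P(f_{\base},k/\denom(\beta),p)$ and $M=\#\{l:n_{O,l}=k\}$ gives
\[
\frac{P(f_{\base},k/\denom(\beta),p)!}{(P(f_{\base},k/\denom(\beta),p)-\#\{l,n_{O,l}=k\})!}
\sim \Bigl(\tfrac{p^{f_{\base}k/\denom(\beta)}}{k/\denom(\beta)}\Bigr)^{\#\{l,n_{O,l}=k\}}.
\]
Multiplying by the (exact) factor $(k/\denom(\beta))^{\#F_k}$ yields $N_k\sim N_k'$ for $k\neq 1$.

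For $k=1$ I would split into the three cases of Theorem \ref{thm:aiji_count}. When $\denom(\beta)\neq 1$ the value is exactly $1$, matching $N_1'=1$. When $\denom(\beta)=1$ and $I=[m]$, the factor is $p^{f_{\base}}!/(p^{f_{\base}}-M)!$ with $M=\#\{l,n_{O,l}=1\}$, which by the same falling-factorial estimate is asymptotic to $(p^{f_{\base}})^M=q^M$; the case $I\neq[m]$ is identical with $M$ replaced by $M-1$, giving $q^{M-1}$. In each case this matches $N_1'$.

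Finally, taking the product over all $k\in S$ — which is a \emph{finite} product whose number of factors depends only on $\sigma$, not on $q$ — preserves the asymptotic equivalence, producing the stated formula. The only point that requires any care is noting that the product is finite and uniformly bounded in length, so the factor-by-factor asymptotics can be combined without trouble; beyond that the argument is a direct bookkeeping of Theorem \ref{thm:aiji_count}.
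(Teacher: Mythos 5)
Your proposal is correct and follows the same approach as the paper: you take the exact formula from Theorem \ref{thm:aiji_count}, observe that $P(f_{\base},k',p)\sim q^{k'}/k'$, apply the standard falling-factorial asymptotic $N!/(N-M)!\sim N^M$ for fixed $M$, and multiply the finitely many factors. The paper's proof is terser (it simply says ``by definition of $N_k$'' where you spell out the falling-factorial estimate), but the content is identical.
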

\begin{proof}
We first start with the large $q$ limit asymptotic of $P(f_{\base},k',p)$. Following its definition, this is $P(f_{\base},k',p) \sim \frac{p^{f_{\base} k'}}{k'}=\frac{q^{k'}}{k'}$. Next, we use Theorem \ref{thm:aiji_count}. By definition of $N_k$, we have that for $k \neq 1$, $$N_k \sim \left(\frac{k}{\denom(\beta)}\right)^{\# F_k} \left(\frac{q^{ k/\denom(\beta)}}{k/\denom(\beta)}\right)^{\#\{l,n_{O,l}=k\}}.$$ We also have $$N_1 \sim \begin{cases}
1 & \denom(\beta) \neq 1 \\
q^{\#\{l,n_{O,l}=1\}} & \denom(\beta) = 1, I = [m] \\
q^{\#\{l,n_{O,l}=1\}-1} & \denom(\beta) = 1, I \neq [m]
\end{cases}.$$
\end{proof}

\begin{lemma}
$\lim_{q \to \infty} a(\sigma,e_{\base},f_{\base},c_0,\{0\}_{i=1}^m,p) = 1$.
\end{lemma}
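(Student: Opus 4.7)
The plan is to bypass the recursive machinery and argue directly from the explicit characterization in Lemma \ref{lem:smallest_disc_element}. Unwinding the definition,
$$a(\sigma, e_{\base}, f_{\base}, c_0, \{0\}_{i=1}^m, p) = \frac{1}{\prod_i \gcd(p^{f_i}-1, e_{i,\rel})} \sum_{\{j_i\}} \mu\!\left(\{(\alpha_i) \in \prod_i \cO_{L_{j_i}} : v_p(\Delta_{P_{(\alpha_i)/K}}) = c_0\}\right),$$
so the claim reduces to showing that the measure in each summand tends to $1$ uniformly in the choice of $\{j_i\}$; averaging then yields the result.

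By Lemma \ref{lem:smallest_disc_element}, the set whose measure we want consists precisely of those tuples whose Teichm\"uller expansions $\alpha_i = a_{i,0} + a_{i,1} \pi_{L_{j_i}} + \cdots$ satisfy three conditions: (a) each $a_{i,0}$ generates $\bbF_{q^{f_{i,\rel}}}$ over $\bbF_q$; (b) the $a_{i,0}$ lie in pairwise distinct Galois orbits over $K$; and (c) each $a_{i,1}$ is nonzero. All three conditions depend only on the first two Teichm\"uller coefficients of each $\alpha_i$, so the measure of the set is an explicit finite-field counting probability, which I would estimate by elementary means.

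For (a), M\"obius inversion on subfields of $\bbF_{q^{f_{i,\rel}}}$ gives $q^{f_{i,\rel}}(1 + O(q^{-1}))$ valid choices of $a_{i,0}$ out of $q^{f_{i,\rel}}$ total. For (b), valid $a_{i,0}$ sit in Galois orbits of exact size $f_{i,\rel}$; if $f_{i,\rel} \neq f_{j,\rel}$ distinctness is automatic, and otherwise the collision probability for a given pair is at most $f_{i,\rel}/q^{f_{i,\rel}}$. Condition (c) fails with probability $q^{-f_{i,\rel}}$. A union bound over the $O(m^2)$ possible failure events then yields a joint success probability of $1 - O(q^{-1})$, uniformly in $\{j_i\}$.

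The one step that requires genuine care is invoking the sufficiency direction of Lemma \ref{lem:smallest_disc_element}: the necessity clause establishes the lower bound $v_p(\Delta) \geq c_0$, but sufficiency is what guarantees that the set we are measuring coincides \emph{exactly} with the tuples satisfying (a), (b), (c), rather than merely containing them. Once both directions are in hand, the remainder is routine finite-field bookkeeping, and I do not expect any additional obstacle.
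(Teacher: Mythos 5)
Your proof is correct, and it takes a genuinely different route from the paper's. The paper's proof stays inside the recursion machinery: it invokes Proposition \ref{prop:induction_a_with_P} to expand $a(\sigma,e_{\base},f_{\base},c_0,\{0\},p)$ into a sum, uses condition (1) of Lemma \ref{lem:smallest_disc_element} to argue that exactly one term of that sum (the singleton partition with $n_{O,i}=f_{i,\rel}$ and $c_i=\tfrac{1}{e_{\base}}(e_{i,\rel}-1)$) is nonzero, estimates the $P$-factor via Corollary \ref{cor:large_q_limit_P}, and then applies condition (2) to the remaining base-case $a$-factors. You instead bypass the recursion entirely: you use Lemma \ref{lem:smallest_disc_element} (in its biconditional form, as you correctly flag) to identify the set $\{v_p(\Delta)=c_0\}$ with an explicit congruence condition on the first two Teichm\"uller coefficients of each $\alpha_i$, then count by elementary finite-field estimates and a union bound. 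Your route is shorter and makes the underlying heuristic (``most tuples are as spread out as possible'') more visible, while the paper's route has the advantage of reusing its own machinery and producing the exact dominant term $P(\sigma,\{E_i\},\{0\},\{n_{O,i}\},p)\prod_i a((e_i^{f_i}),\ldots)\sim 1$ rather than just an $O(q^{-1})$ error bound. One small point worth stating explicitly if you write this up: condition (a) should read ``$a_{i,0}$ generates $\bbF_{q^{f_{i,\rel}}}$ over the residue field of $K$'' (equivalently $n_O(a_{i,0})=f_{i,\rel}$), which is the condition actually used in the proof of Lemma \ref{lem:smallest_disc_element}; taking ``primitive $(p^{f_i}-1)$-th root of unity'' literally would give a set of density $\phi(q^{f_{i,\rel}}-1)/q^{f_{i,\rel}}$, which does not tend to $1$, so your rephrasing is the right one. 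You should also note, as you do, that the count is independent of the choice of $\{j_i\}$ since the Teichm\"uller representatives $T_{f_i,p}$ and the Galois action on them do not depend on $j_i$; this is what makes the averaging over $\{j_i\}$ trivial.
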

\begin{proof}
We follow the recursion in Proposition \ref{prop:induction_a_with_P}, which expresses $a(\sigma,e_{\base},f_{\base},c_0,\{0\}_{i=1}^m,p)$ as a sum. Now, by condition 1 in Lemma \ref{lem:smallest_disc_element}, the only nonzero term in this sum comes from the partition $\{E_i\}_{i=1}^m$ of $[m]$ consist of singleton sets with $E_i=\{i\}$, $n_{O,i}=f_{i,\rel}$, and $c_i = \frac{1}{e_{\base}}(e_{i,\rel}-1)$. Thus, $$a(\sigma,e_{\base},f_{\base},c_0,\{0\}_{i=1}^m,p) = P(\sigma, \{E_i\}, \{0\}, \{n_{O,i}\},p) \prod_i a((e_i^{f_i}),e_{\base},f_i,\frac{1}{e_{\base}}(e_{i,\rel}-1),1).$$ By Corollary \ref{cor:large_q_limit_P}, $$P(\sigma, \{E_i\}, \{0\}, \{n_{O,i}\},p) \sim \prod_i q^{f_{i,\rel}}.$$ So, it remains to find $$a((e_i^{f_i}),e_{\base},f_i,\frac{1}{e_{\base}}(e_{i,\rel}-1),1),$$ to which we will again apply same reasoning as before, except this time we apply condition 2 in Lemma \ref{lem:smallest_disc_element}. This will yield that $$a((e_i^{f_i}),e_{\base},f_i,\frac{1}{e_{\base}}(e_{i,\rel}-1),1) \sim q^{-f_{i,\rel}}.$$ Thus, we obtain that $a(\sigma,e_{\base},f_{\base},c_0,\{0\}_{i=1}^m,p) \sim 1$, as desired.
\end{proof}

Finally, we obtain our desired asymptotic for $G$.

\begin{prop} \label{prop:large_q_limit_G}
$G(\sigma,e_{\base},f_{\base},\{0\}_{i=1}^m,p)(q^{-e_{\base}/2}) \sim q^{-\frac{1}{2}\sum_{i=1}^m f_{i,\rel}\frac{e_{i,\rel}-1}{2}}$
\end{prop}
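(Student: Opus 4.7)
The plan is to exploit that $G(\sigma,e_{\base},f_{\base},\{0\}_{i=1}^m,p)(t) = \sum_c a(\sigma,e_{\base},f_{\base},c,\{0\},p)\, t^c$ is a power series with non-negative coefficients summing to $1$, and to isolate its leading term when we substitute $t = q^{-e_{\base}/2}$. First I would invoke Lemma \ref{lem:smallest_disc_element} to identify the smallest index $c_0$ with nonzero coefficient, namely $c_0 = \frac{1}{e_{\base}}\sum_{i=1}^m f_{i,\rel}(e_{i,\rel}-1)$, and then apply the preceding limit lemma to conclude that $a(\sigma,e_{\base},f_{\base},c_0,\{0\},p) \to 1$ as $q \to \infty$. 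The leading-term contribution is therefore $a_{c_0}\, q^{-c_0 e_{\base}/2} \sim q^{-c_0 e_{\base}/2}$, matching the claimed asymptotic after substituting the value of $c_0$.

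The remaining task is to bound the tail $\sum_{c > c_0} a_c\, q^{-c e_{\base}/2}$. Here the key observation is that since each $\Delta_{P_{\alpha/K}}$ lies in $K$, the spectrum of $v_p(\Delta)$ sits inside $\frac{1}{e_{\base}}\bbZ_{\geq 0}$, so every $c > c_0$ in fact satisfies $c \geq c_0 + \frac{1}{e_{\base}}$. Combined with the mass-one identity from Equation \ref{eqn:temp1131}, which yields $\sum_c a_c = 1$, this gives
$$\sum_{c > c_0} a_c\, q^{-c e_{\base}/2} \leq q^{-c_0 e_{\base}/2 - 1/2} \sum_{c > c_0} a_c \leq q^{-c_0 e_{\base}/2 - 1/2},$$
which is smaller than the main term by a factor of $q^{-1/2} \to 0$.

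Putting the two estimates together gives $G(q^{-e_{\base}/2}) = (1 + o(1))\, q^{-c_0 e_{\base}/2}$, which is the desired asymptotic. The real difficulty lies in the two supporting results: identifying $c_0$ in Lemma \ref{lem:smallest_disc_element} via a careful analysis of how the Teichm\"uller expansions of Galois conjugates are spaced so as to minimize $v_p(\Delta_{P_{(\alpha_i)/K}})$, and verifying $a_{c_0} \to 1$ by tracing the single branch of the recursion of Proposition \ref{prop:induction_a_with_P} that corresponds to the optimal configuration in Lemma \ref{lem:smallest_disc_element}, while using the large-$q$ asymptotic for $P(\sigma, \{E_l\}, \{b_i\}, \{n_{O,l}\}, p)$ from Corollary \ref{cor:large_q_limit_P}. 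Once those inputs are in hand, the proposition reduces to the short elementary estimate above.
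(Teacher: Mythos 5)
Your proof is correct and follows essentially the same route as the paper: identify $c_0$ from Lemma \ref{lem:smallest_disc_element}, use the preceding lemma to get $a_{c_0}\to 1$, and control the tail using the mass-one identity; your tail bound via the exponent gap $c\ge c_0 + \tfrac{1}{e_{\base}}$ is a valid (slightly more explicit) variant of the paper's implicit bound $\sum_{c>c_0}a_c = 1-a_{c_0}\to 0$. One caveat: the exponent you actually derive is $q^{-c_0 e_{\base}/2}=q^{-\frac12\sum_i f_{i,\rel}(e_{i,\rel}-1)}$, which is correct and is what the proof of Theorem \ref{thm:large_q_limit_rho} uses, but it does not literally match the display in Proposition \ref{prop:large_q_limit_G}, which has a stray extra $\tfrac12$ in $\frac{e_{i,\rel}-1}{2}$ --- that appears to be a typo in the paper rather than an error in your argument, so you should not claim the two ``match'' without noting the discrepancy.
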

\begin{proof}
The RHS is the expansion of $q^{-c_0e_{\base}/2}$, after replacing $c_0$ by its definition in Lemma \ref{lem:smallest_disc_element}.
\end{proof}

Now to proceed to computing an asymptotic for $\rho$, we need the following trivial bound.

\begin{lemma} \label{lem:large_q_limit_G1_bound}
Let $e_{min} = \min_i(e_{i,\rel})$. Then, $$G(\sigma,e_{\base},f_{\base},\{1\}_{i=1}^m,p)(q^{-e_{\base}/2}) = o(\sum_{i=1}^m f_{i,\rel}(e_{i,\rel}-1)).$$
\end{lemma}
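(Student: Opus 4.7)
The plan is to exploit two facts: that $\fkm_L$ has total $\mu_L$-mass $\prod_i q^{-f_{i,\rel}}$, and that the minimum discriminant valuation $c_0 := \frac{1}{e_{\base}}\sum_i f_{i,\rel}(e_{i,\rel}-1)$ from Lemma \ref{lem:smallest_disc_element} continues to lower-bound $v_p(\Delta_{P_{\alpha/K}})$ for almost every $\alpha \in \fkm_L$. I interpret the right-hand side of the lemma as $q^{-\frac{1}{2}\sum_i f_{i,\rel}(e_{i,\rel}-1)}$, the order of magnitude of $G(\sigma,e_{\base},f_{\base},\{0\},p)(q^{-e_{\base}/2})$ produced by Proposition \ref{prop:large_q_limit_G}; the lemma then says precisely that $G(\{1\})(q^{-e_{\base}/2})$ is of strictly smaller order than $G(\{0\})(q^{-e_{\base}/2})$, which is exactly what is needed to isolate the $A=[m]$ term as the dominant contribution in Equation~\ref{eqn:rho2}.

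First I would unpack
$$G(\sigma,e_{\base},f_{\base},\{1\},p)(t) = \sum_c a(\sigma,e_{\base},f_{\base},c,\{1\},p)\,t^c,$$
a power series with non-negative coefficients. Mimicking the computation in Equation~\ref{eqn:temp1131} with $\fkm_{L_{j_i}}$ in place of $\cO_{L_{j_i}}$ gives the total mass
$$\sum_c a(\sigma,e_{\base},f_{\base},c,\{1\},p) \;=\; \frac{1}{\prod_i \gcd(p^{f_i}-1,e_{i,\rel})}\sum_{\{j_i\}} \prod_i \mu_{L_{j_i}}(\fkm_{L_{j_i}}) \;=\; \prod_i q^{-f_{i,\rel}} \;=\; q^{-\sum_i f_{i,\rel}}.$$
Next I would argue $a(\sigma,e_{\base},f_{\base},c,\{1\},p)=0$ for every $c < c_0$. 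The locus $\{\alpha\in L : K[\alpha]\neq L\}$ is a proper Zariski-closed subset of $L$, cut out by Vandermonde-type determinants witnessing non-primitivity, so it has $\mu_L$-measure zero; hence almost every $\alpha\in\fkm_L$ generates $L$ as an \'etale $K$-algebra of splitting type $\sigma$, and the global-minimum statement of Lemma \ref{lem:smallest_disc_element} (taken over all such generators, of which elements of $\fkm_L$ are a special case) forces $v_p(\Delta_{P_{\alpha/K}}) \geq c_0$.

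Combining the two ingredients is a one-line tail estimate: since $t_0 := q^{-e_{\base}/2} \in (0,1)$ and all coefficients are non-negative, every surviving coefficient satisfies $t_0^c \leq t_0^{c_0}$, so
$$G(\sigma,e_{\base},f_{\base},\{1\},p)(q^{-e_{\base}/2}) \;\leq\; q^{-c_0 e_{\base}/2}\cdot q^{-\sum_i f_{i,\rel}} \;=\; q^{-\frac{1}{2}\sum_i f_{i,\rel}(e_{i,\rel}-1)\,-\,\sum_i f_{i,\rel}},$$
and because $\sum_i f_{i,\rel}\geq 1$ the extra factor $q^{-\sum_i f_{i,\rel}}\to 0$ as $q\to\infty$, yielding the desired $o(q^{-\frac{1}{2}\sum_i f_{i,\rel}(e_{i,\rel}-1)})$ bound. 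The only step deserving real care --- and thus the main (mild) obstacle --- is the measure-zero claim for the non-generating locus, which is a standard dimension-counting argument that I would spell out via explicit vanishing of Vandermonde determinants on the coordinates of $\alpha$; the slight surprise is that the crude bound ``$a(c,\{1\})=0$ for $c<c_0$, plus total mass $q^{-\sum f_{i,\rel}}$'' already suffices, with no need to pin down the true minimum $c_1\geq c_0$ for $\alpha\in\fkm_L$.
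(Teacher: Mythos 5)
Your proof is correct, and it takes a modestly different route from the paper's. Both arguments rest on the same two pillars --- the total mass $\sum_c a(\sigma,e_{\base},f_{\base},c,\{1\},p) = \prod_i q^{-f_{i,\rel}}$, and a lower bound on the smallest $c$ with nonzero coefficient, followed by the one-line tail estimate. The difference lies in which lower bound is used. The paper derives the bound directly from the constraint $\alpha_i \in \fkm_{L_i}$: every pair of Galois conjugates of $\alpha_i$ differs by an element of valuation at least $\tfrac{1}{e_i}$, and there are $d_i(d_i-1)$ such ordered pairs, so the minimal exponent is at least $\sum_i \tfrac{1}{e_i}d_i(d_i-1) = \tfrac{1}{e_{\base}}\sum_i f_{i,\rel}(e_{i,\rel}f_{i,\rel}-1)$. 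You instead quote Lemma~\ref{lem:smallest_disc_element}, which gives the weaker bound $c_0 = \tfrac{1}{e_{\base}}\sum_i f_{i,\rel}(e_{i,\rel}-1)$; this costs you the measure-zero argument for non-generators (which the paper implicitly relies on as well, since it tacitly assumes each $\alpha_i$ has $d_i$ conjugates), but it buys a cleaner reuse of the preceding lemma. Both bounds suffice because the factor $q^{-\sum_i f_{i,\rel}}$ already provides more than enough decay over the benchmark $q^{-\frac{1}{2}\sum_i f_{i,\rel}(e_{i,\rel}-1)}$. Your reading of the $o(\cdot)$ on the right-hand side of the lemma --- which as written is $o$ of a constant and therefore a typo --- as $o\bigl(q^{-\frac{1}{2}\sum_i f_{i,\rel}(e_{i,\rel}-1)}\bigr)$ is the correct interpretation and matches the use in the proof of Theorem~\ref{thm:large_q_limit_rho}.
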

\begin{proof}
First we find the smallest exponent of $t$ in $G(\sigma,e_{\base},f_{\base},\{1\}_{i=1}^m,p)(t)$. $\frac{1}{e_i}d_i(d_i-1)$ is the minimal valuation of the discriminant of a minimal polynomial of an element $\alpha_i$ in $\fkm_{L_i}$, since there are $d_i(d_i-1)$ choices of pairs of Galois conjugates, and for each pair $\delta,\gamma$ of such Galois conjugates, $v_p(\delta-\gamma) \geq \frac{1}{e_i}$. Now, $v_p(\Delta_{P_{(\alpha_i)}/K})$ takes substantially more into account, and we should also consider the contribution to the discriminant coming from differences of a Galois conjugates of $\alpha_i$ and a Galois conjugate of $\alpha_j$, where $i \neq j$. However, for the sake of an upper bound, we may ignore this. Thus, the smallest exponent of $t$ is at least $\sum_i \frac{1}{e_i}d_i(d_i-1)$. Next, as in Equation \ref{eqn:temp1131}, the sum of all coefficients of $G(\sigma,e_{\base},f_{\base},\{1\}_{i=1}^m,p)$ gives $\prod_{i=1}^m q^{-f_{i,\rel}}$. Thus, we obtain that \begin{align*}
    G(\sigma,e_{\base},f_{\base},\{1\}_{i=1}^m,p)(q^{-e_{\base}/2}) & \leq \prod_{i=1}^m q^{-f_{i,\rel}} \cdot (q^{-e_{\base}/2})^{\sum_i \frac{1}{e_i}d_i(d_i-1)}\\
    & = \prod_{i=1}^m q^{-f_{i,\rel}} \cdot q^{-\frac{1}{2}\sum_{i=1}^m \frac{1}{e_{i,\rel}}d_i(d_i-1)}.
\end{align*} Since $\sum_{i=1}^m f_{i,\rel}+\sum_{i=1}^m \frac{1}{e_{i,\rel}}d_i(d_i-1) > \sum_{i=1}^m f_{i,\rel}(e_{i,\rel}-1)$, for any choice of $e_i$ and $f_i$, we obtain our desired bound. 
\end{proof}

We are now ready to prove Theorem \ref{thm:large_q_limit_rho}.
\begin{proof} [Proof of theorem \ref{thm:large_q_limit_rho}]
We have by Equation \ref{eqn:rho2}$$\rho(\sigma,e_{\base},f_{\base};p)=\frac{w(d)\sum_{A \subset [m]}G(A,0)(q^{-e_{\base}/2})G(A^c,1)(q^{-e_{\base}/2})}{\perm(\sigma)q^{\sum_{i=1}^m (e_{i,\rel}-1)f_{i,\rel}/2}\prod_{i=1}^m f_{i,\rel}}.$$ Now by Proposition \ref{prop:large_q_limit_G} and Lemma \ref{lem:large_q_limit_G1_bound}, most terms in this sum drop out, except $$G([m],0)(q^{-e_{\base}/2}) \sim q^{-\frac{1}{2}\sum_{i=1}^m f_{i,\rel}(e_{i,\rel}-1)}.$$ Also, by definition, $w(d)=\frac{q^{d+1}-q^{d}}{q^{d+1}-1} \sim 1$. Thus, we have that $$\rho(\sigma,e_{\base},f_{\base};p) \sim \frac{1}{\perm(\sigma)\prod_{i=1}^m f_{i,\rel}q^{\sum_{i=1}^m (e_{i,\rel}-1)f_{i,\rel}}}=\frac{1}{\perm(\sigma)\prod_{i=1}^m f_{i,\rel}q^{\sum_{i=1}^m (e_{i,\rel}-1)f_{i,\rel}}}.$$
\end{proof}

\section{Future work}

We have implemented many formulas in this paper in Sagemath. In particular, we have implemented the code that computes $G$ and $\rho$. This is available \href{https://github.com/johng23/Density-of-p-adic-polynomials}{here}. We used the code to compute $\rho$ for many different combinations of inputs of $\sigma$, $e_{\base}$, and $f_{\base}$. The output satisfies the functional equation $\rho(\sigma, e_{\base},f_{\base};p) = \rho(\sigma,e_{\base},f_{\base};1/p)$, which gives further evidence to Conjecture 1.2 in \cite{bhargava_cremona_fisher_gajović_2022}. Furthermore, we have made the following more general observation. 

\begin{conj}
Define $$\rho(\sigma,e_{\base},f_{\base};p,t)=\frac{w(d)\sum_{A \subset [m]}G(A,0)(t)G(A,1)(t)}{\perm(\sigma)q^{\sum_{i=1}^m (e_i-1)f_i/2}\prod_{i=1}^m f_i}.$$ Then, $$\rho(\sigma,e,f;p,t) = \rho(\sigma,e,f;p^{-1},t^{-1}).$$
\end{conj}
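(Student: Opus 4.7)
The plan is to promote the conjectured symmetry $\rho(\sigma,e,f;p) = \rho(\sigma,e,f;p^{-1})$ to its two-variable form by establishing a precise transformation law for the auxiliary generating function $G(\sigma,e_{\base},f_{\base},\{b_i\})(t)$ under the involution $(p,t) \mapsto (p^{-1},t^{-1})$. In view of the defining formula for $\rho(\sigma,e,f;p,t)$ and the explicit behavior $w(d)|_{p \to p^{-1}} = q^{-d}w(d)$ and $q^{\sum(e_i-1)f_i/2}|_{p \to p^{-1}} = q^{-\sum(e_i-1)f_i/2}$, the involution $A \leftrightarrow A^c$ on the sum (reading the likely-intended expression $\sum_A G(A,0)(t)\,G(A^c,1)(t)$) must be the mechanism that absorbs the residual monomial prefactors. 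This suggests positing a transformation of the form
$$G(\sigma,e_{\base},f_{\base},\{b_i\};p^{-1})(t^{-1}) = p^{N(\sigma,\{b_i\})}\, t^{M(\sigma,\{b_i\})}\, G(\sigma,e_{\base},f_{\base},\{b_i'\};p)(t),$$
where the dual configuration $\{b_i'\}$ swaps $\{0\}$ and $\{1\}$ (corresponding to $\cO_{L_i} \leftrightarrow \fkm_{L_i}$, matching the reversal map $P(x) \mapsto x^d P(1/x)$ used in Section \ref{sec:red_to_OL_thm_proof}), and $N, M$ are explicit rational exponents to be pinned down.

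First I would verify the proposed law on the base cases (Examples \ref{ex:base_case_a} and \ref{ex:base_case_a_GF}) and check compatibility with the clean shift in Corollary \ref{cor:GF_reduce_i}, whose monomial prefactor $p^{-f_{\base}d}t^{d(d-1)/e_{\base}}$ manifestly inverts under the involution. Next I would induct on $d = \sum e_if_i$ using the master recursion of Corollary \ref{cor:GF_induction}. This requires a parallel transformation law for the combinatorial counting function $P(\sigma,\{E_l\},\{b_i\},\{n_{O,l}\},p)$ of Theorem \ref{thm:aiji_count}. Its basic building block $P(f_{\base},k',p) = \tfrac{1}{k'}\sum_{d \mid k'}\mu(k'/d) p^{f_{\base}d}$ satisfies the explicit identity $P(f_{\base},k',p^{-1}) = p^{-f_{\base}k'}P(f_{\base},k',p)$, and I would promote this through the product formula of Theorem \ref{thm:aiji_count} to an identity $P(\sigma,\{E_l\},\{b_i\},\{n_{O,l}\};p^{-1}) = p^{-X}\, P(\sigma,\{E_l\},\{b_i\},\{n_{O,l}\};p)$ for an explicit $X$ depending on $\sigma$, $\{E_l\}$, $\{n_{O,l}\}$, and $\denom(\beta)$. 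Combining this with the monomial prefactor $t^{(d(d-1)-\sum_l \sum_{i\in E_l}d_i(\sum_{i\in E_l}(d_i/n_{O,l})-1))\beta/e_{\base}}$ in the definition of $H(\{b_i\})(t)$, and with the inductive hypothesis applied to each factor $G(\sigma_l,\ldots)(t^{n_{O,l}})$, the induction step reduces to a symbolic verification that the aggregated exponents reassemble into the conjectured $p^N t^M$ on the parent side.

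The main obstacle I anticipate is the precise exponent bookkeeping needed to close the recursion: the parity analysis already used to prove rationality in Theorem \ref{thm:main} is substantial, and here we must track the full signed exponents of $p$ and $t$, and additionally verify that the involution $\{b_i\} \leftrightarrow \{b_i'\}$ commutes with the operator $\inc$ of Section \ref{sec:generating_function}, so that both sides of the recursion advance in lock-step. Once the generating function functional equation is established, plugging into the formula for $\rho(\sigma,e,f;p,t)$, combining the $A \leftrightarrow A^c$ swap with the transformations of $w(d)$ and $q^{\sum(e_i-1)f_i/2}$, and using $\{b_i\} \leftrightarrow \{b_i'\}$ to identify $G(A,0)$ with the image of $G(A,1)$ under the involution, should produce the desired identity $\rho(\sigma,e,f;p,t) = \rho(\sigma,e,f;p^{-1},t^{-1})$; as a sanity check, specializing $t = q^{-e_{\base}/2}$ must reproduce the original conjecture of \cite{bhargava_cremona_fisher_gajović_2022}. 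An alternative, more conceptual route would be to interpret $G$ as an Igusa-type local zeta function attached to the discriminant polynomial on $\prod \fkm_{L_i}^{b_i}$ and invoke a Denef--Meuser-type functional equation, but the product-of-extensions setting and the shift by $\{b_i\}$ make that setup nontrivial, so the inductive approach above seems the more directly accessible.
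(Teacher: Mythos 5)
This statement is not proved in the paper: it is explicitly labelled a conjecture, supported only by Sagemath computations, and the author remarks that the specialization to $t = q^{-e_{\base}/2}$ is established in \emph{subsequent} work (with Asvin G and Yifan Wei) by a genuinely different method ($p$-adic integration and a Denef--Meuser-style argument). So there is no proof in this paper to compare your proposal against; you are outlining a possible attack on an open problem.

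As an attack, your outline is plausible in spirit but has a concrete error in the one place where you actually assert an identity. The building block $P(f_{\base},k',p) = \tfrac{1}{k'}\sum_{d\mid k'}\mu(k'/d)\,p^{f_{\base}d}$ does \emph{not} satisfy $P(f_{\base},k',p^{-1}) = p^{-f_{\base}k'}P(f_{\base},k',p)$. Already for $k'=1$ one gets $P(f_{\base},1,p^{-1}) = p^{-f_{\base}}-1 = -\,p^{-f_{\base}}\bigl(p^{f_{\base}}-1\bigr) = -\,p^{-f_{\base}}P(f_{\base},1,p)$, off by a sign, and for $k'=2$ the two sides differ by more than a sign: $P(f_{\base},2,p^{-1}) = \tfrac12\bigl(p^{-2f_{\base}}-p^{-f_{\base}}\bigr)$ while $p^{-2f_{\base}}P(f_{\base},2,p) = \tfrac12\bigl(1-p^{-f_{\base}}\bigr)$. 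A sum of monomials $\sum_d \mu(k'/d)p^{f_{\base}d}$ over \emph{divisors} $d$ of $k'$ is not palindromic in general (the map $d \mapsto k'-d$ does not preserve divisors), so no monomial factor of $p$ will repair this. Since this identity is the engine meant to drive the transformation law for $P(\sigma,\{E_l\},\{b_i\},\{n_{O,l\}},p)$ through Theorem \ref{thm:aiji_count}, the proposed induction would not close as written. More generally, the proposal is honest that $N$, $M$, $X$, and the compatibility of the $\{b_i\}\leftrightarrow\{b_i'\}$ duality with $\inc$ are all ``to be pinned down,'' so even setting aside the false identity, this is a strategy sketch rather than a proof --- which is consistent with the fact that the paper itself does not claim one.
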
 This conjecture is a generalization of Conjecture 1.2 of \cite{bhargava_cremona_fisher_gajović_2022}. Indeed, it reduces to Conjecture 1.2 after specializating $t$ to $q^{-e_{\base}/2}$.

In subsequent work, Asvin G, Yifan Wei, and I prove Theorem \ref{thm:main} for $\rho$ along with the functional equation $\rho(p)=\rho(1/p)$ and resolve \cite[Conjecture 1.2]{bhargava_cremona_fisher_gajović_2022} for $\rho$ in the tame case using a different approach via $p$-adic integration. In fact, our work will show a more general phenomenon than \cite[Conjecture 1.2]{bhargava_cremona_fisher_gajović_2022} about Galois maps between varieties over local fields. With a minor modification of our methods, we will also resolve the above conjecture.
\printbibliography
\end{document}